\documentclass[11pt]{amsart}
\usepackage{amsmath,a4wide,scalerel}
\usepackage{mathabx}
\usepackage{xcolor,graphicx}
\usepackage{mathrsfs}
\usepackage{pgfplots}
\usepackage{subcaption}
\usepackage[normalem]{ulem}
\usepackage{float}
\usepackage{ctable} 
\usepackage{bm}
\usepackage{hyperref}
\newtheorem{theorem}{Theorem}

\newtheorem{proposition}[theorem]{Proposition}
\newtheorem{remark}[theorem]{Remark}

\newcommand{\Z}{{\mathbb Z}}
\newcommand{\R}{{\mathbb R}}
\newcommand{\N}{{\mathbb N}}
\newcommand{\E}{{\mathbb E}}

\newcommand{\e}{\mathrm{e}}

\newcommand{\T}{\mathbb{T}}
\newcommand{\de}{{\rm det}}
\newcommand{\ad}{{\rm add}}
\newcommand{\mI}{{\rm mI}}
\newcommand{\mS}{{\rm mS}}
\newcommand{\tr}{{\rm tr}}

\usepackage{color}
\usepackage{dsfont}

\title[Splitting for stochastic linear Vlasov equations]{Splitting integrators for linear Vlasov equations with stochastic perturbations}

\date{\today}

\author{Charles-Edouard Br\'ehier}
			\address{Universite de Pau et des Pays de l'Adour, E2S UPPA, CNRS, LMAP, Pau, France}
			\email{charles-edouard.brehier@univ-pau.fr}

\author{David Cohen}
              \address{Department of Mathematical Sciences,
              Chalmers University of Technology and University of Gothenburg, 41296~Gothenburg, Sweden}
              \email{\tt david.cohen@chalmers.se}

\begin{document}

\begin{abstract}
We consider a class of linear Vlasov partial differential equations driven by Wiener noise. Different types of stochastic perturbations are treated: additive noise, multiplicative It\^o and Stratonovich noise, and transport noise. We propose to employ splitting integrators for the temporal discretization of these stochastic partial differential equations.
These integrators are designed in order to preserve qualitative properties of the exact solutions depending on the stochastic perturbation, such as preservation of norms or positivity of the solutions. We provide numerical experiments in order to illustrate the properties of the proposed integrators and investigate
mean-square rates of convergence.
\end{abstract}

\maketitle

{\small\noindent
{\bf AMS Classification.} 35Q83. 60-08. 60H15. 60H35. 65C30. 65J08.

\bigskip\noindent{\bf Keywords.} Stochastic partial differential equations,
Stochastic Vlasov equation, Splitting scheme, Trace formula, Preservation properties,
Positivity-preserving scheme.

\section{Introduction}

In this article, we are interested in stochastic perturbations of the linear Vlasov equation
\[
\left\lbrace
\begin{aligned}
&\partial_tf(t,x,v)+v\cdot\nabla_xf(t,x,v)+E(x)\cdot\nabla_v f(t,x,v)=0,\\
&f(0,x,v)=f_0(x,v),
\end{aligned}
\right.
\]
see the next sections for the precise setting.
Our objective is to identify some qualitative properties of the solutions and to propose temporal discretization schemes which are able to preserve those properties.

The Vlasov equation has been introduced in the literature in the middle of the 20th century~\cite{v61}, and has become a fundamental tool for the mathematical description of (collisionless) plasma in astrophysics and in plasma physics. It is common to interpret $f(t,x,v)$ as the density of particles having position $x$ (assumed to take values in a torus) and velocity $v$ at time $t$, starting from an initial configuration $f_0(x,v)$, and where the particles are transported by the ordinary differential equation
\[
\left\lbrace
\begin{aligned}
&\dot{x}_t=v_t,\\
&\dot{v}_t=E(x_t).
\end{aligned}
\right.
\]
The linear model above is a simple version, where the vector field $E$ is imposed and is independent of time. In the Vlasov--Maxwell or the Vlasov--Poisson equations, for example, $E$ is not fixed and depends on the solution $f$, see for instance~\cite{bertrand05,CHEUNG1973201,MR2549372}. We refer to~\cite{MR1942465} for a review of kinetic models which take into account collisional effects, which may be linear (for instance Vlasov--Fokker--Planck, Bhatnagar--Gross--Krook equations) or nonlinear (Boltzmann equation) and to the monograph~\cite{MR1379589} for the analysis of kinetic partial differential equations (PDEs), including the Vlasov equation. Note that kinetic PDEs have also been used as popular models in mathematical biology in the recent years.

Let us mention the main qualitative properties satisfied by the linear Vlasov equations, we refer to Section~\ref{sec:deterministic} for details. If the initial value is nonnegative, then at all times the solution is also nonnegative. In addition, all $L^p$ norms are preserved.

Stochastic versions of Vlasov kinetic equations have been considered in some recent works, for instance to model random injection or removal of particles in the system or to model
the influence of stochastic (space-time) perturbations of the external vector field $E$. In~\cite{MR3661662} the authors prove regularization by noise results for a class of linear Vlasov equations with transport noise, and similar results are proved for nonlinear Vlasov--Poisson(--Fokker--Planck) equations in~\cite{bedrossian2022vlasovpoisson}. See also the article \cite{MR3251910}, where it is shown that introducing a stochastic (space-time) perturbation of the vector field $E$ can prevent collapse in Vlasov--Poisson systems.
Variational integrators are tested for some stochastic versions of the Vlasov equation in the article~\cite{MR4246876}. Finally, numerical methods are applied to some linear collisional kinetic equations with stochastic perturbation in the diffusion limit in the article~\cite{MR3975156}. We refer to~\cite{MR3202241} for a review of numerical methods applied to kinetic PDEs. The article~\cite{MR0731212} promotes the application of particle methods to approximate solutions of Vlasov equations. See also~\cite{MR1977366} for a description of various numerical approaches.

Our motivation in this work is driven by the perspective of geometric numerical integration and the desire to illustrate how the qualitative behavior of the solutions and of well-chosen integrators are modified under stochastic perturbations of various types. In addition, in this work we do not provide rigorous convergence analysis  for the proposed integrators. To the best of our knowledge, the numerical schemes constructed below have not been studied previously in the literature.

The numerical schemes considered in this article combine two techniques which have been extensively studied for the discretization of the Vlasov equation: the semi-Lagrangian approach~\cite{MR1672731,MR2051070,MR4099398,MR2586230,MR2353945,
MR3033035} and the splitting technique~\cite{MR3606462,MR3151388,MR3294671,MR2519600}. The lists of references above are not exhaustive. The semi-Lagrangian approach exploits characteristic curves to express the numerical solution at time $t_{n+1}$ at grid points in terms of the numerical solution at time $t_n$, with the application of an interpolation procedure. The splitting approach consists in decomposing on each time interval the dynamics into subsystems which can be solved exactly. We briefly review semi-Lagrangian and splitting techniques when applied to the deterministic linear Vlasov equation in Section~\ref{sec:deterministic}. In the sequel, we focus on the temporal discretization and we thus only deal with semi-discrete splitting methods.

Various types of stochastic perturbations of the linear Vlasov equation are considered. We illustrate below how the properties of the exact solutions differ for each type of perturbation, and how this needs to be taken into account in the construction of the splitting integrators. Basic concepts and results on splitting schemes, when applied to deterministic differential equations, are found for instance in the monographs~\cite{MR2840298,MR3642447} and in the review articles~\cite{MR2009376,blanes}, see also references therein. For applications of splitting schemes to the temporal discretization of stochastic partial differential equations, we refer the interested reader to the following (non-exhaustive) list:  \cite{m06,MR2646103,MR3617573,MR3912762,MR3839068,MR4019051,
MR4132896,MR4263224,MR4535413,MR4400428,bcg22,b23}. In all the considered stochastic perturbations of the linear Vlasov equation, a similar strategy is applied to define the splitting schemes: the contributions of the deterministic and stochastic parts in the evolution are treated separately. In addition, the deterministic part is always treated by the same Lie--Trotter splitting scheme. For each type of noise the exact solution for the stochastic contribution is known exactly. Considering splitting schemes is appealing in our context since this leads to effective explicit numerical schemes which are able to preserve some qualitative properties of the exact solutions. This is justified theoretically and demonstrated numerically for all the stochastic versions of the linear Vlasov equation considered in this work. Let us now describe these versions and the main results. We refer to Section~\ref{sec:notation} below for the notation.

\begin{itemize}
\item \emph{Additive noise perturbation}: in Section~\ref{sec:additive} we consider the following stochastic PDE
\[
\left\lbrace
\begin{aligned}
&\text df^\ad(t,x,v)+v\cdot\nabla_xf^\ad(t,x,v)\,\text dt+E(x)\cdot\nabla_v f^\ad(t,x,v)\,\text dt=\text dW(t,x,v)~,\\
&f^\ad(0,x,v)=f_0(x,v)~,
\end{aligned}
\right.
\]
see equation~\eqref{eq:SPDE-add}. The splitting scheme in this case is given by~\eqref{eq:LT-add}. Since the noise is additive, the solution is a Gaussian process, thus the positivity or the $L^p$ norm of the initial value $f_0$ cannot be preserved. Instead, we prove trace formulas: the second-order moment of the $L^2$ norm of the exact solution grows linearly with time (see Proposition~\ref{propo:trace}). This linear evolution is preserved for the numerical solution computed using the proposed splitting scheme (see Proposition~\ref{propo:tracenum}).
\item \emph{Multiplicative noise perturbation with It\^o interpretation}: in Section~\ref{sec:multiplicativeIto} we consider the following stochastic PDE
\[
\left\lbrace
\begin{aligned}
&\text df^\mI(t,x,v)+v\cdot\nabla_xf^\mI(t,x,v)\,\text dt+E(x)\cdot\nabla_v f^\mI(t,x,v)\,\text dt=f^\mI(t,x,v)\,\text dW(t,x,v)~,\\
&f^\mI(0,x,v)=f_0(x,v)~,
\end{aligned}
\right.
\]
see equation~\eqref{eq:SPDE-mult-Ito}. The splitting scheme in this case is given by~\eqref{eq:LT-mult-Ito}. Proposition~\ref{propo:multIto} provides the qualitative properties for the exact solution. First, one has almost sure preservation of the positivity of the initial value. Second, the mass of the initial value is preserved in expectation. Finally, under an appropriate condition on the diffusion coefficients, the second-order moment of the $L^2$ norm is shown to satisfy a simple evolution law. All those properties are shown to be preserved by the splitting scheme~\eqref{eq:LT-mult-Ito}, see Proposition~\ref{propo:LT-mult-Ito}.
\item \emph{Multiplicative noise perturbation with Stratonovich interpretation}: in Section~\ref{sec:multiplicativeStrato} we consider the following stochastic PDE
\[
\left\lbrace
\begin{aligned}
&\text df^\mS(t,x,v)+v\cdot\nabla_xf^\mS(t,x,v)\,\text dt+E(x)\cdot\nabla_v f^\mS(t,x,v)\,\text dt=f^\mS(t,x,v)\circ \text dW(t,x,v)~,\\
&f^\mS(0,x,v)=f_0(x,v)~,
\end{aligned}
\right.
\]
see equation~\eqref{eq:SPDE-mult-Strato}. The splitting scheme in this case is given by~\eqref{eq:LT-mult-Strato}. Proposition~\ref{propo:multStrato} provides the qualitative properties for the exact solution. First, one has almost sure preservation of the positivity of the initial value. Moreover, under an appropriate condition on the diffusion coefficients, the second-order moment of the $L^2$ norm is shown to satisfy a simple evolution law. All those properties are shown to be preserved by the splitting scheme~\eqref{eq:LT-mult-Strato}, see Proposition~\ref{propo:LT-mult-Strato}.
\item \emph{Transport noise perturbation}: in Section~\ref{sec:transport} we consider the following stochastic PDE
\[
\left\lbrace
\begin{aligned}
&\text df^\tr(t,x,v)+v\cdot\nabla_xf^\tr(t,x,v)\,\text dt+E(x)\cdot\nabla_v f^\tr(t,x,v)\,\text dt+\nabla_vf^\tr(t,x,v)\odot \text d\bm{W}(t,x)=0~,\\
&f^\tr(0,x,v)=f_0(x,v)~,
\end{aligned}
\right.
\]
see equation~\eqref{eq:SPDE-transport}. The splitting scheme in this case is given by~\eqref{eq:LT-transport}. Almost preservation of positivity and $L^p$ norms are stated in Proposition~\ref{propo:transport} for the exact solution. We show that those properties are also preserved by the splitting scheme~\eqref{eq:LT-transport}, see Proposition~\ref{propo:transportnum}.
\end{itemize}
All the theoretical results mentioned above are illustrated by numerical experiments for
each stochastic partial differential equation (SPDE). Snapshots of the numerical solutions also illustrate the influence of the stochastic perturbation on the behavior of the solutions. Even if we do not provide a rigorous convergence analysis, we conjecture that the proposed splitting schemes are consistent. Moreover, we report below numerical experiments to investigate mean-square rates of convergence and identify that in all cases convergence with order $1$ is expected to hold. We leave a convergence analysis and other questions such as treating nonlinear models or constructing higher order integrators for future works. Our implementation is based on the code from \cite{codesmatlab} and is available under
\href{https://doi.org/10.5281/zenodo.10495233}{https://doi.org/10.5281/zenodo.10495233}.

Sections~\ref{sec:additive},~\ref{sec:multiplicative} and~\ref{sec:transport} below are organized similarly, this allows us to exhibit the main common or different features of the considered SPDEs and their numerical discretizations.

\section{Notation}\label{sec:notation}

The dimension $d\in\mathbb{N}$ is an arbitrary integer. Let $\T^d=(\R/\Z)^d$ denote the $d$-dimensional torus. For any differentiable mapping $f:\T^d\times\R^d\to\R$, let $\nabla_xf=\bigl(\partial_{x_1}f,\ldots,\partial_{x_d}f\bigr)$ and $\nabla_vf=\bigl(\partial_{v_1}f,\ldots,\partial_{v_d}f\bigr)$ denote the gradients of $f$ with respect to the spatial variable $x\in\T^d$ and the velocity $v\in\R^d$ respectively.

For any real number $p\in[1,\infty)$, and any measurable mapping $f:\T^d\times\R^d\to\R$, define
\[
\|f\|_{L_{x,v}^p}=\Bigl(\iint {|f(x,v)|}^p\, \text dx\, \text dv\Bigr)^{\frac{1}{p}}:=\left(\int_{\R^d}\int_{\T^d} {|f(x,v)|}^p\, \text dx\, \text dv\right)^{\frac{1}{p}}
\]
and set $L_{x,v}^p=\{f;~\|f\|_{L_{x,v}^p}<\infty\}$. Below, to simplify the notation we will not write the domains of the integrals. 
In addition, for any measurable mapping $f:\T^d\times\R^d\to\R$, define
\[
\|f\|_{L_{x,v}^\infty}=\underset{(x,v)\in\T^d\times\R^d}{\rm ess~ sup}~|f(x,v)|
\]
and set $L_{x,v}^\infty=\{f;~\|f\|_{L_{x,v}^\infty}<\infty\}$.

Given a function $f:(t,x,v)\in\R^+\times\T^d\times\R^d\mapsto f(t,x,v)\in\R$, the notation $f(t)$ for $t\ge 0$ is frequently used in the sequel to denote the mapping $(x,v)\in\T^d\times\R^d\mapsto f(t,x,v)$. The expression $f(t)(x,v)=f(t,x,v)$ is also often employed below.

To describe the considered class of (stochastic) partial differential equations, let us denote by $E:x\in\T^d\mapsto E(x)\in\mathbb{R}^d$ a vector field which is assumed to be of class $\mathcal{C}^\infty$. In addition,
the initial value is denoted by $f_0:(x,v)\in\T^d\times\R^d\mapsto f_0(x,v)\in\R$ in the sequel.
Below, it is assumed that $f_0$ is non-random.
Precise regularity and integrability conditions on $f_0$ are imposed below.

It remains to describe the setting for the considered stochastic perturbation of linear Vlasov equations.
Let $(\Omega,\mathcal{F},\mathbb{P})$ be a probability space and $\bigl(\mathcal{F}_t\bigr)_{t\ge 0}$ be a filtration satisfying the usual conditions.
The expectation operator is denoted by $\E[\cdot]$. Let $K\in\N$ be an integer, and let $\big(\beta_k\bigr)_{1\le k\le K}$ be a family of independent standard real-valued Wiener processes, adapted to the filtration.

In Sections~\ref{sec:additive} and~\ref{sec:multiplicative}, the stochastic perturbation is of additive or multiplicative type, and is written as a real-valued Wiener process defined by
\begin{equation}\label{eq:Wtxv}
W(t,x,v)=\sum_{k=1}^{K}\beta_k(t)\sigma_k(x,v),\quad t\ge 0, x\in\T^d, v\in\R^d,
\end{equation}
where $\sigma_k:(x,v)\in\T^d\times\R^d\mapsto \sigma_k(x,v)\in\R$ are real-valued mappings, for $1\le k\le K$.

In Section~\ref{sec:transport}, the stochastic perturbation is of transport type and it does not depend on
of the variable $v$. It is written as a $\R^d$-valued Wiener process defined by
\begin{equation}\label{eq:Wtx}
\bm{W}(t,x)=\sum_{k=1}^{K}\beta_k(t)\bm{\sigma}_k(x),\quad t\ge 0, x\in\T^d,
\end{equation}
where $\sigma_{j,k}:x\in\T^d\mapsto \sigma_{j,k}(x)\in\R$ are real-valued mappings, and for all $x\in\T^d$ one has $\bm{\sigma}_k(x)=\bigl(\sigma_{1,k}(x),\ldots,\sigma_{d,k}(x)\bigr)\in\R^d$ .

The notation $W(t)$ is also used below for the random mapping $(x,v)\in\T^d\times\R^d\mapsto W(t,x,v)$, associated with~\eqref{eq:Wtxv} while the notation $\bm{W}(t)$ stands for the random mapping $x\in\T^d\mapsto \bm{W}(t,x)$, associated with~\eqref{eq:Wtx} respectively, for all $t\ge 0$. We refer for instance to~\cite[Chapter~4]{DPZ} and~\cite[Chapter~10]{LPS} for details on Wiener processes and stochastic integrals with values in Hilbert spaces.

It is assumed that the mappings $\sigma_k$ and $\sigma_{j,k}$, $1\le j\le d$, $1\le k\le K$ are of class $\mathcal{C}^\infty$. Further growth or integrability conditions on these mappings are imposed below depending on the considered class of problems.

Finally, the numerical methods considered in this paper use the following notation. For the temporal discretization, the time-step size is denoted by $\tau$. Without loss of generality, it is assumed that $\tau\in(0,1)$. For any nonnegative integer $n\ge 0$, set $t_n=n\tau$, and introduce Wiener increments
\begin{equation}\label{eq:WincBeta}
\delta \beta_{k,n}=\beta_k(t_{n+1})-\beta_k(t_n)~,\quad 1\le k\le K.
\end{equation}
The random variables $\bigl(\tau^{-\frac12}\delta\beta_{n,k}\bigr)_{n\ge 0,1\le k\le K}$ are independent standard real-valued Gaussian random variables (with mean equal to $0$ and variance equal to $1$).

Increments of the Wiener processes defined by~\eqref{eq:Wtxv} and~\eqref{eq:Wtx} are given for any nonnegative integer $n\ge 0$ by
\begin{align}
&\delta W_n(x,v)=W(t_{n+1},x,v)-W(t_n,x,v)=\sum_{k=1}^{K}\delta \beta_{k,n}\sigma_k(x,v)~,\quad (x,v)\in\T^d\times\R^d,\label{eq:WincW}\\
&\delta \bm{W}_n(x)=\bm{W}(t_{n+1},x)-\bm{W}(t_n,x)=\sum_{k=1}^{K}\delta \beta_{k,n}\bm{\sigma}_k(x)~,\quad x\in\T^d.\label{eq:WincbW}
\end{align}

In this work, we focus on the temporal discretization of stochastic linear Vlasov equations.
However, practical implementation requires some discretization procedure with respect to the spatial and velocity variables $x\in\T^d$ and $v\in\R^d$. In the numerical experiments below, a semi-Lagrangian approach is employed, see for instance \cite{MR1672731,MR2051070,MR4099398} or \cite[Chapter 6]{MR1660086}. The semi-Lagrangian approach exploits characteristic curves to express the numerical solution at time $t_{n+1}$ at grid points in terms of the numerical solution at time $t_n$. Since the characteristic curves in general do not hit grid points, an interpolation procedure is employed to the numerical solution at time $t_n$.
Note that, in order to preserve the positivity property of solutions to the considered SPDEs (see below), we use a linear interpolation in the implementation of the studied numerical schemes.
However the analysis of semi-Lagrangian discretization is out of the scope of this work. The mesh sizes with respect to the spatial and velocity domains are denoted by $\delta x$ and $\delta v$ respectively. Since the velocity space $\R^d$ is unbounded, in practice a truncation procedure at large velocities $v$ is imposed, whereas for the spatial variable $x$, periodic boundary conditions are used. Besides, the verification of trace formulas and evolution laws for some moments of the solutions and the study of mean-square convergence require a Monte Carlo averaging procedure in order to compute approximate values of expectations.

\section{Preliminaries on the deterministic linear Vlasov equation}\label{sec:deterministic}

The objective of this section is to provide basic and well-known background on the deterministic version of the linear Vlasov equation, before considering stochastically perturbed versions in the next sections. Some notation and fundamental properties introduced below are employed in the sequel. We study the linear Vlasov equation
\begin{equation}\label{eq:Vlasov}
\left\lbrace
\begin{aligned}
&\partial_tf^\de(t,x,v)+v\cdot\nabla_xf^\de(t,x,v)+E(x)\cdot\nabla_v f^\de(t,x,v)=0~,~t\ge 0, x\in\T^d, v\in \R^d,\\
&f^\de(0,x,v)=f_0(x,v)~,\quad x\in \T^d, v\in\R^d,
\end{aligned}
\right.
\end{equation}
where the unknown is a mapping $f^\de:(t,x,v)\in\R^+\times\T^d\times\R^d\mapsto f^\de(t,x,v)\in\R$. Under appropriate regularity and integrability conditions, given any initial value $f_0$, the partial differential equation~\eqref{eq:Vlasov} admits a unique solution.

\subsection{Analysis and properties of the problem}
The solution of~\eqref{eq:Vlasov} can be constructed using solutions of the associated ordinary differential equation
\begin{equation}\label{eq:ODE}
\left\lbrace
\begin{aligned}
&\dot{x}_t=v_t\\
&\dot{v}_t=E(x_t).
\end{aligned}
\right.
\end{equation}
Let $\bigl(\phi_t\bigr)_{t\in\R}$ denote the associated flow of this differential equation. Recall that this means that for any initial value $(x_0,v_0)\in\T^d\times\R^d$, the solution at time $t\ge 0$ is given by $(x_t,v_t)=\phi_t(x_0,v_0)$. In addition, the flow map property $\phi_{t+s}=\phi_t\circ\phi_s$ is satisfied for all $t,s\in\R$. Finally, for all $t\in\R$, the mapping $\phi_t$ is a smooth diffeomorphism and one has $(\phi_t)^{-1}=\phi_{-t}$. Since the vector field $(x,v):\T^d\times\R^d\mapsto (v,E(x))\in\R^d\times\R^d$ does not depend on the time variable, $\bigl(\phi_{t}^{-1}\bigr)_{t\in\R}=\bigl(\phi_{-t}\bigr)_{t\in \R}$ is obtained by reverting time in the dynamics~\eqref{eq:ODE}, i.\,e.
by considering the flow of the ordinary differential equation
\[
\left\lbrace
\begin{aligned}
&\dot{x}_t=-v_t\\
&\dot{v}_t=-E(x_t).
\end{aligned}
\right.
\]
The flow $\bigl(\phi_t\bigr)_{t\in\R}$ associated with~\eqref{eq:ODE} satisfies a remarkable property: it preserves the volume in $\T^d\times\R^d$. This can be written as follows: for any integrable function $\varphi:\T^d\times\R^d\to \R$, one has
\[
\iint \varphi(\phi_t(x,v))\, \text dx\, \text dv=\iint \varphi(x,v)\, \text dx\, \text dv.
\]

We are now in position to recall the links between the Vlasov PDE~\eqref{eq:Vlasov} and the ODE system~\eqref{eq:ODE}. On the one hand, assume that $\bigl(f^\de(t)\bigr)_{t\ge 0}$ is solution of the PDE~\eqref{eq:Vlasov}, then for any solution $\bigl(x_t,v_t\bigr)_{t\ge 0}$ of the ODE system~\eqref{eq:ODE}, applying the chain rule, one has
\[
\frac{\text df^\de(t,x_t,v_t)}{\text dt}=0.
\]
As a consequence, for all $t\ge 0$, $x_0\in\T^d$ and $v_0\in\R^d$, one has
\[
f^\de(t,\phi_t(x_0,v_0))=f^\de(t,x_t,v_t)=f^\de(0,x_0,v_0)=f_0(x_0,v_0).
\]
Therefore the ODE system~\eqref{eq:ODE} provides characteristic curves for the Vlasov equation~\eqref{eq:Vlasov}.  This gives a strategy to solve~\eqref{eq:Vlasov} by the method of lines: the solution $f(t)$ at any time $t\ge 0$ is given by
\begin{equation}\label{eq:solutionVlasov}
f^\de(t,x,v)=f_0\bigl(\phi_t^{-1}(x,v)\bigr)~,\quad x\in\T^d, v\in\R^d.
\end{equation}
On the other hand, under sufficient regularity conditions, one can check that the mapping $f^\de:(t,x,v)\in\R^+\times\T^d\times\R^d\mapsto f^\de(t,x,v)\in\R$ defined by the expression~\eqref{eq:solutionVlasov} solves the Vlasov equation~\eqref{eq:Vlasov}.

The expression~\eqref{eq:solutionVlasov} leads to define a group of linear operators $\bigl(S(t)\bigr)_{t\ge 0}$ as follows: for all $t\in\R$ and any measurable mapping $f:\T^d\times\R^d\to\R$, set
\begin{equation}\label{eq:S}
S(t)f=f\bigl(\phi_t^{-1}(\cdot)\bigr).
\end{equation}
The group property $S(t+s)=S(t)S(s)$, $t,s\in\R$, follows from the group property of the flow $\bigl(\phi_t\bigr)_{t\in\R}$.

Let us describe some remarkable properties of solutions of the Vlasov equation~\eqref{eq:Vlasov}, which are considered below for
stochastic perturbations of this PDE and for its numerical discretization. Let $f^\de(t)=S(t)f_0$ for all $t\ge 0$.
\begin{itemize}
\item \emph{Preservation of positivity}. Assume that $f_0(x,v)\ge 0$ for all $(x,v)\in\T^d\times\R^d$. Then one has $f^\de(t,x,v)\ge 0$ for all $t\ge 0$ and $(x,v)\in\T^d\times\R^d$.
\item \emph{Preservation of integrals}. Let $\Phi:\R\to\R^+$ be a real-valued measurable mapping. Then for all $t\ge 0$ one has
\[
\iint \Phi(f^\de(t,x,v))\,\text dx\, \text dv=\iint \Phi(f_0(x,v))\,\text dx\, \text dv.
\]
\item \emph{Isometry property}. For all $p\in[1,\infty]$ and $t\ge 0$, the linear operator $S(t):L_{x,v}^p\to L_{x,v}^p$ is an isometry: if $f_0\in L_{x,v}^p$, then $f^\de(t)=S(t)f_0\in L_{x,v}^p$ for all $t\ge 0$ and
\[
\|f^\de(t)\|_{L_{x,v}^p}=\|S(t)f_0\|_{L_{x,v}^p}=\|f_0\|_{L_{x,v}^p}.
\]
\end{itemize}
The preservation of positivity and the preservation of the $L_{x,v}^\infty$ norm are straightforward consequences of the expression~\eqref{eq:solutionVlasov}. The preservation of integrals property follows from the preservation of volume by the flow $\bigl(\phi_t\bigr)_{t\in\R}$ and implies the isometry property above when $p\in[1,\infty)$ by choosing $\Phi=|\cdot|^p$.

Finally, the preservation of volume by the flow $\bigl(\phi_t\bigr)_{t\ge 0}$ also implies the following result.
Assume that $f_0\in L_{x,v}^1$, then for any bounded and continuous function $\varphi:\T^d\times\R^d\to \R$ and for all $t\ge 0$, one has
\[
\iint f^\de(t,x,v)\varphi(x,v)\,\text dx\,\text dv=\iint f(0,x,v)\varphi(\phi_t(x,v)) \,\text dx\,\text dv.
\]
The properties above provide a possible probabilistic interpretation of solutions to the Vlasov PDE~\eqref{eq:Vlasov}. If the initial value $f_0$ is a probability density function,
in particular, this requires the conditions $f_0\ge 0$ and $\|f_0\|_{L_{x,v}^1}=1$, then for all $t\ge 0$, the mapping $f^\de(t)=S(t)f_0$ is a probability density function.
If $(\mathcal{X}_0,\mathcal{V}_0)$ is a $\T^d\times\R^d$-valued random variable having density $f_0$ with respect to the Lebesgue measure, then $f^\de(t)$ is the density with respect to the Lebesgue measure of the random variable $(\mathcal{X}_t,\mathcal{V}_t)=\phi_t(\mathcal{X}_0,\mathcal{V}_0)$. In other words, $f^\de(t)$ is the probability density function associated with the transport by the dynamics~\eqref{eq:ODE} of the distribution $f_0$ of the initial values $(x_0,v_0)$. In some contexts, in particular for Hamiltonian dynamics ($E=-\nabla V$ where $V:\T^d\to\R$ is a smooth mapping), the Vlasov equation~\eqref{eq:Vlasov} is referred to as the Liouville equation associated with the dynamics~\eqref{eq:ODE}.

\subsection{Numerical approximation}
Let us describe numerical integrators applied to the linear Vlasov equation~\eqref{eq:Vlasov}. As already mentioned, we focus on temporal discretization only in this work. To approximate solutions and preserve the properties mentioned above, it is natural to rely on splitting integrators. We only present Lie--Trotter versions, for simplicity and motivated by the fact that stochastic perturbations usually lead to numerical methods which are of strong order less than $2$ (the usual order of Strang splitting for deterministic problems, see for instance \cite{CHENG1976330,MR1672731,MR3151388,MR3606462}). Let us mention that high-order numerical schemes for the deterministic part of the problem (for instance using
a Strang splitting instead of a Lie--Trotter splitting) could however be of interest for stochastic problems with small noise, see for instance \cite[Chapter~4]{MR4369963}.

The principle of splitting integrators is to combine solutions of subsystems of evolution equations which can be solved exactly. For the deterministic linear Vlasov equation~\eqref{eq:Vlasov} it is natural to decompose the problem into the two subsystems
\begin{subequations}
\begin{align}
&\partial_tf^1(t,x,v)+v\cdot\nabla_xf^1(t,x,v)=0~,\quad t\ge 0, x\in\T^d, v\in \R^d\label{eq:VlasovSplit1}\\
&\partial_tf^2(t,x,v)+E(x)\cdot\nabla_vf^2(t,x,v)=0~,\quad t\ge 0, x\in\T^d, v\in \R^d\label{eq:VlasovSplit2}.
\end{align}
\end{subequations}
Let $\bigl(S^1(t)\bigr)_{t\in\R}$ and $\bigl(S^2(t)\bigr)_{t\in\R}$ be the associated groups of linear operators: the solutions at time $t\ge 0$ of~\eqref{eq:VlasovSplit1} and~\eqref{eq:VlasovSplit2} are respectively $f^1(t)=S^1(t)f_0$ and $f^2(t)=S^2(t)f_0$, where for all $t\in\R$, $x\in\T^d$, $v\in\R^d$ and any measurable mapping $f:\T^d\times\R^d\to\R$, one has
\begin{align*}
&S^1(t)f(x,v)=f(x-tv,v)\\
&S^2(t)f(x,v)=f(x,v-tE(x)).
\end{align*}
Note that the partial differential equations~\eqref{eq:VlasovSplit1} and~\eqref{eq:VlasovSplit2} are associated with the ordinary differential equations
\[
\left\lbrace
\begin{aligned}
&\dot{x}_t^1=v_t^1\\
&\dot{v}_t^1=0,
\end{aligned}
\right.\quad\text{and}
\quad
\left\lbrace
\begin{aligned}
&\dot{x}_t^2=0\\
&\dot{v}_t^2=E(x_t^2),
\end{aligned}
\right.
\]
for which expressions of the exact solutions are known: for all $t\ge 0$, one has
\begin{align*}
\bigl(x_t^1,v_t^1\bigr)&=\phi_t^1(x_0^1,v_0^1)=\bigl(x_0^1+tv_0^1,v_0^1\bigr)\\
\bigl(x_t^2,v_t^2\bigr)&=\phi_t^2(x_0^2,v_0^2)=\bigl(x_0^2,v_0^2+tE(x_0^2)\bigr).\\
\end{align*}
The link between the Vlasov equation~\eqref{eq:Vlasov} and the dynamics~\eqref{eq:ODE} is retrieved for the subsystems~\eqref{eq:VlasovSplit1} and~\eqref{eq:VlasovSplit2} and the associated dynamics: for all $t\ge 0$ and any measurable mapping $f:\T^d\times\R^d\to \R$ one has
\begin{equation}\label{eq:S1S2}
S^1(t)f=f\bigl((\phi_t^1)^{-1}(\cdot)\bigr)~,\quad S^2(t)f=f\bigl((\phi_t^2)^{-1}(\cdot)\bigr)
\end{equation}
where $\bigl(\phi_t^{1}\bigr)_{t\in\R}$ and $\bigl(\phi_t^{2}\bigr)_{t\in\R}$ are the flows associated with the dynamics introduced above.

As a consequence, it is straightforward to check that the properties stated above for solutions of the Vlasov equation~\eqref{eq:Vlasov} (preservation of positivity, preservation of integrals and isometry property) and the interpretation of the solution as the probability density functions associated with the transport by dynamics of the distribution of initial value, persist for the solutions of the subsystems~\eqref{eq:VlasovSplit1} and~\eqref{eq:VlasovSplit2}.

Applying the Lie--Trotter splitting method yields to the definition of the following numerical method for the deterministic linear Vlasov equation~\eqref{eq:Vlasov}:
set $f_0^\de=f_0$ and for any nonnegative integer $n\ge 0$, set
\begin{equation}\label{eq:LTdeter}
f_{n+1}^\de=S^2(\tau)S^1(\tau)f_n^\de.
\end{equation}
Using the definitions\eqref{eq:S1S2} of $S^1(t)$ and $S^2(t)$ above, one has for all $n\ge 0$, $x\in\T^d$ and $v\in\R^d$
\begin{align*}
f_{n+1}^\de(x,v)&=\bigl(S^2(\tau)S^1(\tau)f_n^\de\bigr)(x,v)=\bigl(S^1(\tau)f_n^\de\bigr)(x,v-\tau E(x))\\
&=f_n^\de(x-\tau v-\tau^2E(x),v-\tau E(x)).
\end{align*}
With that expression, it appears that the scheme~\eqref{eq:LTdeter} can be interpreted as a discrete version of the expression~\eqref{eq:solutionVlasov} for the exact solution of~\eqref{eq:Vlasov}, where the flow $\phi_t$ is approximated using a splitting integrator based on the two subsystems of ordinary differential equations above.

It is straightforward to check that the properties stated above for the exact solution of~\eqref{eq:Vlasov} also hold for the Lie--Trotter splitting scheme~\eqref{eq:LTdeter}, for any value of the time step size $\tau$. In particular, one has the following results.
\begin{itemize}
\item \emph{Preservation of positivity}. Assume that $f_0(x,v)\ge 0$ for all $(x,v)\in\T^d\times\R^d$. Then for all $n\in\N$ one has $f_n^\de(x,v)\ge 0$ for all $(x,v)\in\T^d\times\R^d$.
\item \emph{Isometry property}. Let $p\in[1,\infty]$ and assume that $f_0\in L_{x,v}^p$. Then for all $n\ge 0$ one has $f_n^\de\in L_{x,v}^p$ and
\[
\|f_n^\de\|_{L_{x,v}^p}=\|f_0\|_{L_{x,v}^p}.
\]
\end{itemize}

We conclude these preliminaries on the deterministic linear Vlasov equation~\eqref{eq:Vlasov} with a numerical experiment with the goal to illustrate the behavior of the solution to the linear Vlasov PDE~\eqref{eq:Vlasov}, in dimension $d=1$.
In Figure~\ref{fig:snap-det} below, snapshots at times $\{0,0.5,1,1.5,2,2.5\}$ of the numerical solution computed using the Lie--Trotter splitting scheme~\ref{eq:LTdeter} are displayed. We consider a standard two-stream instability test case: the initial value $f_0$ is given by
\begin{equation}\label{eq:f0}
f_0(x,v)=\frac{\e^{-v^2/2}}{\sqrt{2\pi}}\bigl(1+ 0.05\cos{(2\pi x)}\bigr)v^2~,\quad x\in\T, v\in\R,
\end{equation}
The two-stream instability has been extensively used to illustrate wave-particle interactions since the seminal paper~\cite{Denavit}, see also for instance~\cite{Bittencourt,KNORR1973165,CHENG1976330}.
In addition, the vector field $E$ is given by
\begin{equation}\label{eq:E}
E(x)=\cos(2\pi x)~,\quad x\in\T.
\end{equation}
The discretization parameters for Figure~\ref{fig:snap-det} are $\delta x=\frac{1}{500}$, $\delta v=\frac{4\pi}{500}$, and $\tau=0.1$.

\begin{figure}[h]
\begin{subfigure}{.3\textwidth}
  \centering
  \includegraphics[width=1\linewidth]{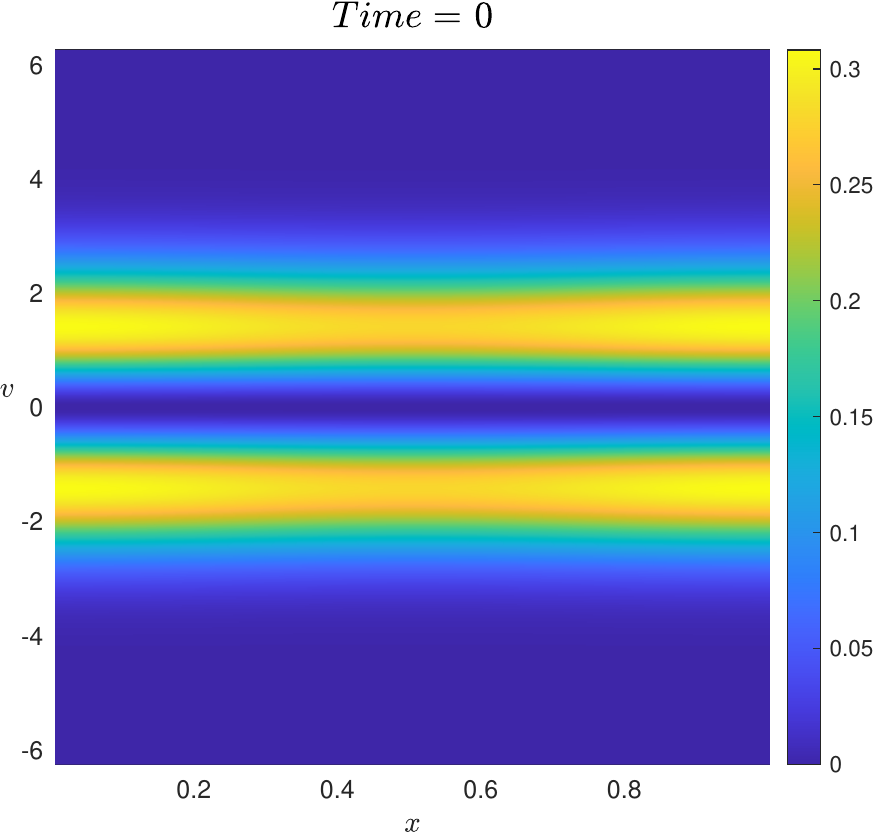}
\end{subfigure}%
\begin{subfigure}{.3\textwidth}
  \centering
  \includegraphics[width=1\linewidth]{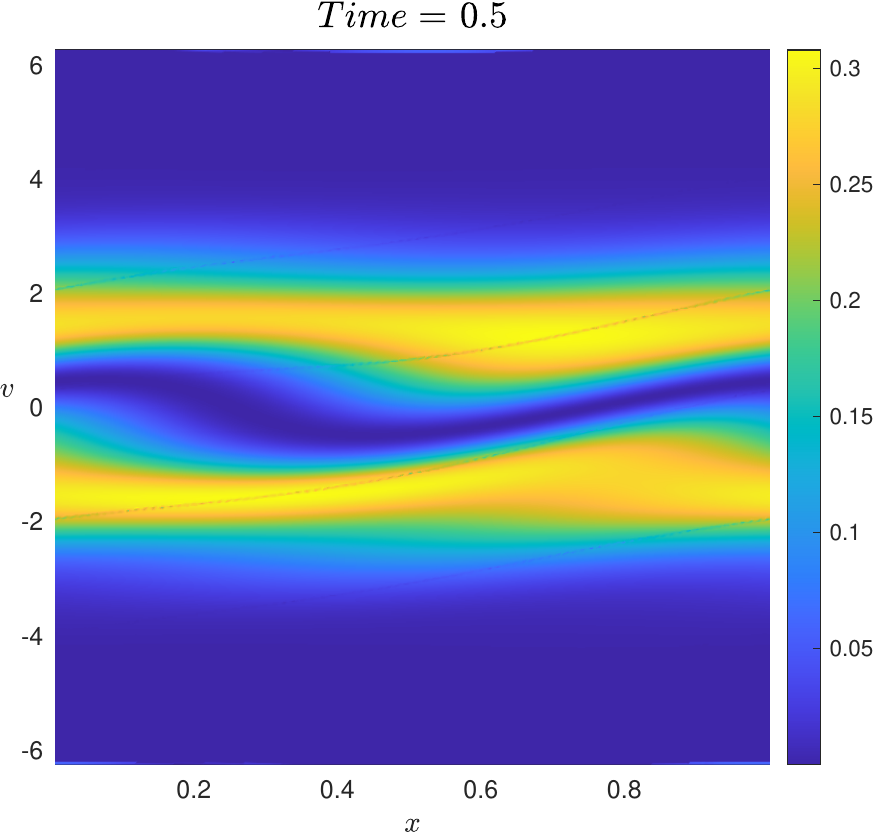}
\end{subfigure}%
\begin{subfigure}{.3\textwidth}
\includegraphics[width=1\linewidth]{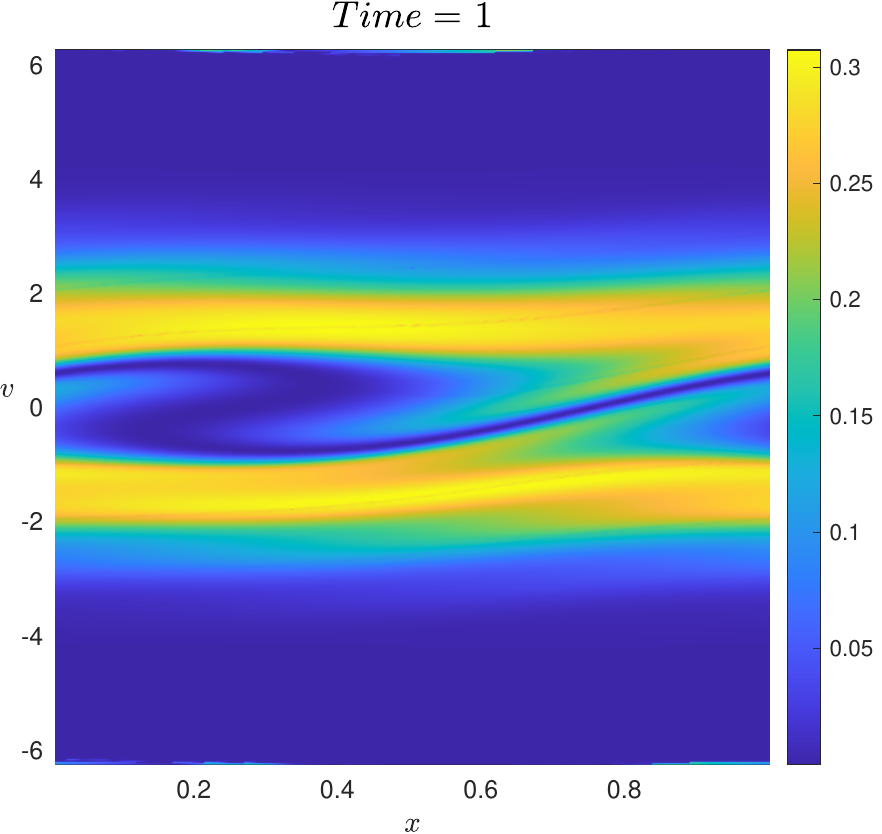}
\end{subfigure}\\[1ex]%
\begin{subfigure}{.3\textwidth}
  \centering
  \includegraphics[width=1\linewidth]{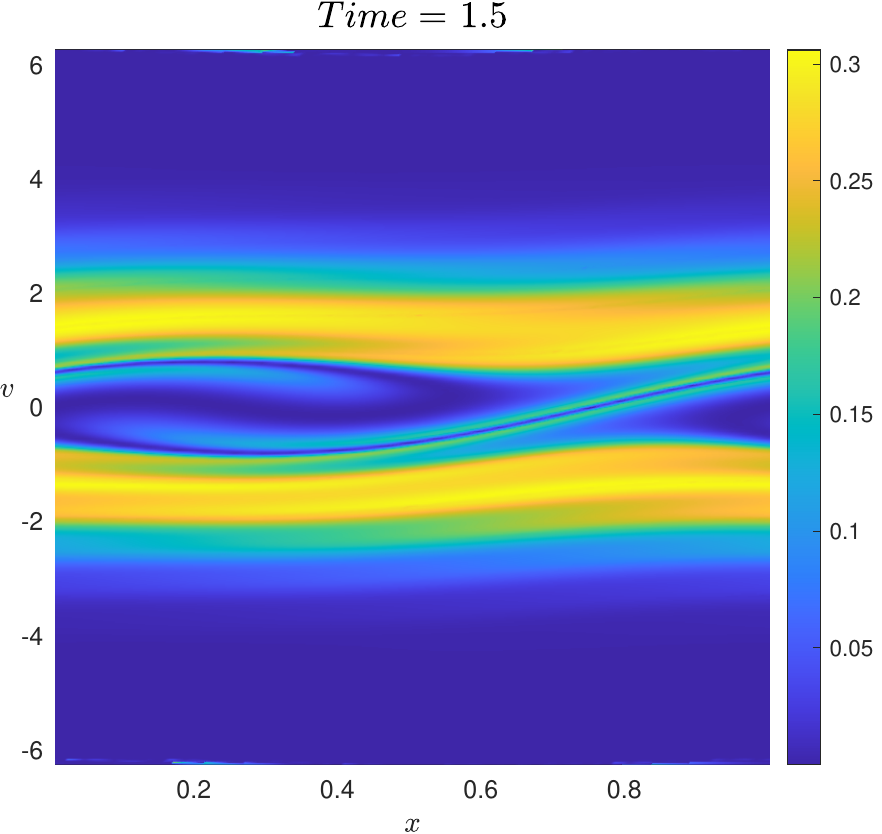}
\end{subfigure}%
\begin{subfigure}{.3\textwidth}
  \centering
  \includegraphics[width=1\linewidth]{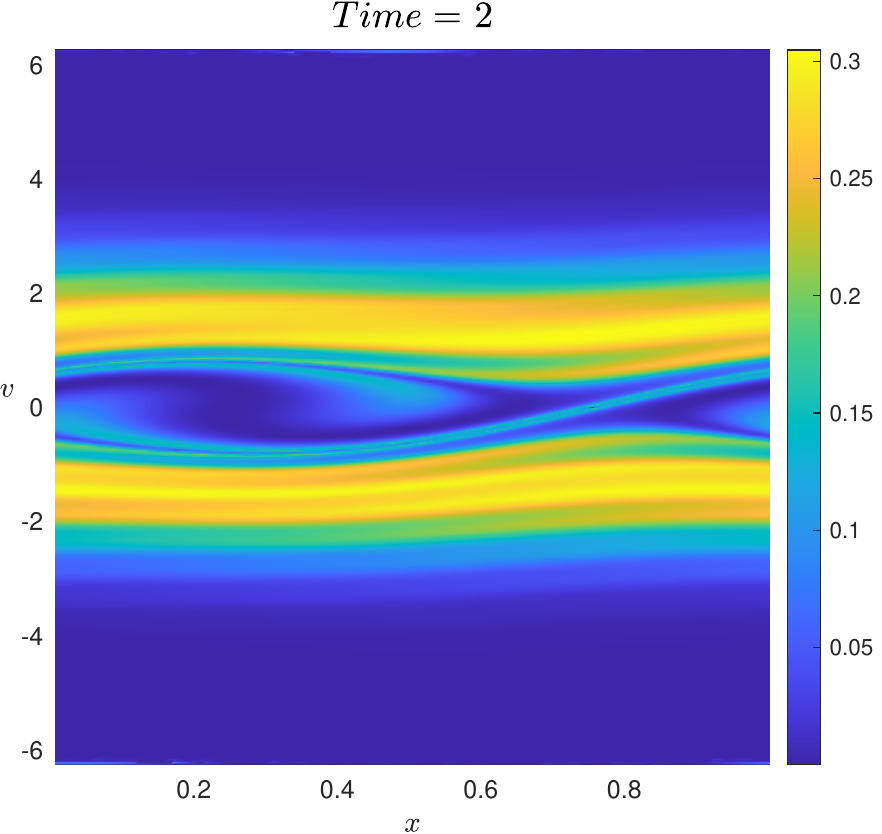}
\end{subfigure}%
\begin{subfigure}{.3\textwidth}
  \centering
  \includegraphics[width=1\linewidth]{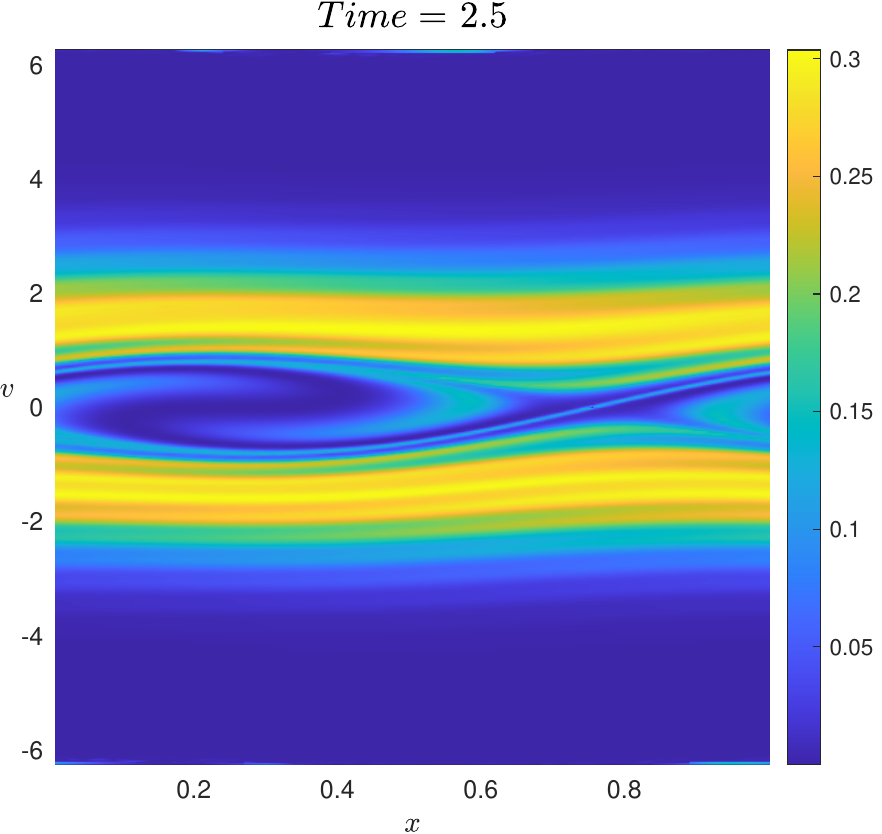}
\end{subfigure}
\caption{Snapshots: approximation of the solution of the deterministic PDE~\eqref{eq:Vlasov} with initial value $f_0$ given by~\eqref{eq:f0}, at times~$\{0,0.5,1,1.5,2,2.5\}$, using the Lie--Trotter splitting scheme~\eqref{eq:LTdeter} with time-step size $\tau=0.1$.}
\label{fig:snap-det}
\end{figure}

\section{The stochastic linear Vlasov equation perturbed by additive noise}\label{sec:additive}

In this section, we consider a version of the linear Vlasov equation~\eqref{eq:Vlasov} perturbed by additive noise of the type~\eqref{eq:Wtxv}: for $t\ge 0$, $x\in\T^d$, $v\in \R^d$
\begin{equation}\label{eq:SPDE-add}
\left\lbrace
\begin{aligned}
&\text df^\ad(t,x,v)+v\cdot\nabla_xf^\ad(t,x,v)\,\text dt+E(x)\cdot\nabla_v f^\ad(t,x,v)\,\text dt=\text dW(t,x,v),\\ 
&f^\ad(0,x,v)=f_0(x,v).
\end{aligned}
\right.
\end{equation}

\subsection{Analysis and properties of the problem}

We recall that the initial value $f_0$ is assumed to be non-random. The solution of the stochastic partial differential equation~\eqref{eq:SPDE-add}
can be interpreted in different ways, as explained below. Those interpretations are equivalent under appropriate regularity and integrability conditions.

On the one hand, using the group $\bigl(S(t)\bigr)_{t\in\R}$ of linear operators given by~\eqref{eq:S} in Section~\ref{sec:deterministic}
and the definition of the noise~\eqref{eq:Wtxv}, one can consider mild solutions
in the sense of~\cite[Chapter~5]{DPZ}, see also~\cite[Section~10.4]{LPS} for instance: for all $t\ge 0$, one has
\begin{equation}\label{eq:SPDE-add-solution1}
f^\ad(t)=S(t)f_0+\int_{0}^{t}S(t-s)\,\text dW(s)=S(t)f_0+\sum_{k=1}^{K}\int_{0}^{t}S(t-s)\sigma_k \,\text d\beta_k(s),
\end{equation}
where $\int_{0}^{t}S(t-s)\,\text dW(s)$ is considered as a stochastic integral with values in the Hilbert space $L_{x,v}^2$, see~\cite[Chapter~4]{DPZ}.
On the other hand, using the expression~\eqref{eq:S} for the linear operator $S(t)$, one has for all $t\ge 0$, $x\in\T^d$ and $v\in\R^d$
\begin{equation}\label{eq:SPDE-add-solution2}
f^\ad(t,x,v)=f_0(\phi_t^{-1}(x,v))+\sum_{k=1}^{K}\int_0^t \sigma_k(\phi_{t-s}^{-1}(x,v))\,\text d\beta_k(s),
\end{equation}
where $\bigl(\phi_t\bigr)_{t\in\R}$ denotes the flow of~\eqref{eq:ODE}.

Finally note that the expression above can be retrieved by applying the It\^o--Wentzell formula (see \autoref{sec:ItoWentzell} for the statement):
if $t\mapsto (x_t,v_t)=\phi_t(x_0,v_0)$ is the solution of the ordinary differential equation~\eqref{eq:ODE} with arbitrary initial value $(x_0,v_0)\in\T^d\times\R^d$, and if the solution of~\eqref{eq:SPDE-add} is sufficiently regular, then the stochastic process $t\in\R^+\mapsto f^\ad(t,x_t,v_t)$ satisfies
\begin{align*}
\text df^\ad(t,x_t,v_t)&=\sum_{k=1}^{K}\sigma_k(x_t,v_t)\,\text d\beta_k(t)\\
f^\ad(t,x_t,v_t)&=f_0(x_0,v_0)+\sum_{k=1}^{K}\int_{0}^{t}\sigma_k(x_s,v_s)\,\text d\beta_k(s).
\end{align*}

Let us now study properties of the solution to the SPDE~\eqref{eq:SPDE-add}.
First, note that the random field $(t,x,v)\in\R^+\times\mathbb{T}^d\times\R^d\mapsto f^\ad(t,x,v)$ is Gaussian.
Owing to~\eqref{eq:SPDE-add-solution2} one has $\E[f^\ad(t,x,v)]=f_0(\phi_t^{-1}(x,v))=f^\de(t,x,v)$ for all $t\ge 0$, $x\in\T^d$ and $v\in\R^d$, where $\bigl(f^\de(t)\bigr)_{t\ge 0}$ is the solution of deterministic linear Vlasov equation~\eqref{eq:Vlasov} with initial value $f_0$. 
The covariance structure of the Gaussian process $\E[f^\ad(t_1,x_1,v_1)f^\ad(t_2,x_2,v_2)]$ is computed using It\^o's isometry formula applied to~\eqref{eq:SPDE-add-solution2}: for all $t_1,t_2\ge 0$, $x_1,x_2\in\T^d$ and $v_1,v_2\in\R^d$ one has
\begin{align*}
\E[f^\ad(t_1,x_1,v_1)&f^\ad(t_2,x_2,v_2)]=f_0(\phi_{t_1}^{-1}(x_1,v_1))f_0(\phi_{t_2}^{-1}(x_2,v_2))\\
&+\sum_{k=1}^{K}\int_{0}^{\min(t_1,t_2)}\sigma_k(\phi_{t_1-s}^{-1}(x_1,v_1))\sigma_k(\phi_{t_2-s}^{-1}(x_2,v_2))\,\text ds.
\end{align*}

Since the solution to the SPDE~\eqref{eq:SPDE-add} is a Gaussian process, the preservation properties satisfied in the deterministic case cannot be satisfied. However, one has the following remarkable property, which is often called a \emph{trace formula} in the literature, see for instance
\cite{MR2083326,MR2359504,MR3172331,MR3348201,MR2379913,MR3033008,MR3771721,MR4077824,MR4077238,MR4375496,MR4535413}.
\begin{proposition}\label{propo:trace}
Assume that $\sigma_k\in L_{x,v}^2$ for all $1\le k\le K$, and that $f_0\in L_{x,v}^2$. Let $\bigl(f^\ad(t)\bigr)_{t\ge 0}$ be the solution of the SPDE~\eqref{eq:SPDE-add} driven by the additive noise~\eqref{eq:Wtxv}. Then for all $t\ge 0$ one has $f^\ad(t)\in L^2(\Omega, L_{x,v}^2)$ and
\begin{equation}\label{eq:trace}
\E[\|f^\ad(t)\|_{L_{x,v}^2}^2]=\|f_0\|_{L_{x,v}}^2+t\sum_{k=1}^{K}\|\sigma_k\|_{L_{x,v}^2}^2.
\end{equation}
\end{proposition}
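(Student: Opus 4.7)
The plan is to start from the mild-solution representation \eqref{eq:SPDE-add-solution1}, namely
\[
f^\ad(t) = S(t)f_0 + \sum_{k=1}^{K}\int_0^t S(t-s)\sigma_k \, \text d\beta_k(s),
\]
viewed as an identity in $L^2(\Omega, L_{x,v}^2)$. I would first check that the right-hand side makes sense under the stated assumptions: the isometry property of $S(t)$ on $L_{x,v}^2$ yields $\|S(t-s)\sigma_k\|_{L_{x,v}^2} = \|\sigma_k\|_{L_{x,v}^2}$, hence each integrand $s\mapsto S(t-s)\sigma_k$ lies in $L^2([0,t]; L_{x,v}^2)$, so the stochastic integral is a well-defined element of $L^2(\Omega, L_{x,v}^2)$.

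Next, I would expand $\|f^\ad(t)\|_{L_{x,v}^2}^2$ using the scalar product on $L_{x,v}^2$. Taking expectations, the cross terms of the form $\langle S(t)f_0, \int_0^t S(t-s)\sigma_k\, \text d\beta_k(s)\rangle_{L_{x,v}^2}$ vanish, since $S(t)f_0$ is deterministic and the stochastic integral has mean zero in $L_{x,v}^2$. The mixed terms for $k\neq k'$ involving two stochastic integrals also vanish by independence of $\beta_k$ and $\beta_{k'}$. What remains is
\[
\E\bigl[\|f^\ad(t)\|_{L_{x,v}^2}^2\bigr] = \|S(t)f_0\|_{L_{x,v}^2}^2 + \sum_{k=1}^{K}\E\Bigl[\Bigl\|\int_0^t S(t-s)\sigma_k\, \text d\beta_k(s)\Bigr\|_{L_{x,v}^2}^2\Bigr].
\]

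To finish, I would apply It\^o's isometry in the Hilbert space $L_{x,v}^2$ to each stochastic integral, giving
\[
\E\Bigl[\Bigl\|\int_0^t S(t-s)\sigma_k\, \text d\beta_k(s)\Bigr\|_{L_{x,v}^2}^2\Bigr] = \int_0^t \|S(t-s)\sigma_k\|_{L_{x,v}^2}^2\, \text ds,
\]
and then invoke the isometry property of $S(\cdot)$ on $L_{x,v}^2$ (established in Section~\ref{sec:deterministic}) to collapse each time-dependent norm to $\|\sigma_k\|_{L_{x,v}^2}^2$. The same isometry applied to the first term gives $\|S(t)f_0\|_{L_{x,v}^2}^2 = \|f_0\|_{L_{x,v}^2}^2$, and summing over $k$ produces the claimed trace formula \eqref{eq:trace}.

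I do not anticipate a serious obstacle: the whole argument reduces to It\^o's isometry combined with the $L^2$-isometry of the deterministic transport group. The only mildly technical point is justifying the termwise manipulations rigorously in the Hilbert-space-valued stochastic calculus setting, which is standard and can simply be referenced to~\cite[Chapter~4]{DPZ}.
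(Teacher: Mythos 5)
Your argument is correct and coincides with the paper's second proof of Proposition~\ref{propo:trace}: both start from the mild formulation~\eqref{eq:SPDE-add-solution1}, apply the It\^o isometry in the Hilbert space $L_{x,v}^2$, and use the isometry property of $S(\cdot)$ to reduce each term to $\|\sigma_k\|_{L_{x,v}^2}^2$ and $\|f_0\|_{L_{x,v}^2}^2$. Your additional remarks on the vanishing of the cross terms and the well-posedness of the stochastic integral are the standard justifications implicit in that proof, so nothing is missing.
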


The trace formula~\eqref{eq:trace} is proved below using the two formulations~\eqref{eq:SPDE-add-solution1} and~\eqref{eq:SPDE-add-solution2}.

\begin{proof}[First proof of Proposition~\ref{propo:trace}]
Choosing $t_1=t_2=t$, $x_1=x_2=x$ and $v_1=v_2=v$ in the expression of the covariance above, and integrating with respect to the variables $x$ and $v$, one obtains
\begin{align*}
\E[\|f^\ad(t)\|_{L_{x,v}^2}^2]&=\iint\E[f^\ad(t,x,v)^2]\,\text dx\,\text dv\\
&=\iint f_0(\phi_t^{-1}(x,v))^2\,\text dx\,\text dv+\sum_{k=1}^{K}\int_{0}^{t}\iint\E[\sigma_k(\phi_{t-s}^{-1}(x,v))^2]\,\text dx\,\text dv\,\text ds\\
&=\iint f_0(x,v)^2\,\text dx\,\text dv+\sum_{k=1}^{K}\int_{0}^{t}\iint\E[\sigma_k(x,v)^2]\,\text dx\,\text dv\,\text ds\\
&=\|f_0\|_{L_{x,v}}^2+t\sum_{k=1}^{K}\|\sigma_k\|_{L_{x,v}^2}^2
\end{align*}
using the fact that for all $t\ge 0$ the mapping $\phi_t$ preserves volume in $\T^d\times\R^d$, see Section~\ref{sec:deterministic}.
\end{proof}

\begin{proof}[Second proof of Proposition~\ref{propo:trace}]
Using the mild formulation~\eqref{eq:SPDE-add-solution1} of the solution $f(t)$ of~\eqref{eq:SPDE-add}, the It\^o isometry formula in the Hilbert space $L_{x,v}^2$ (see for instance~\cite[Theorem~10.16]{LPS}), and the isometry property for the linear operators $S(t):L_{x,v}^{2}\to L_{x,v}^2$ (see Section~\ref{sec:deterministic}), for all $t\ge 0$, one has
\begin{align*}
\E[\|f^\ad(t)\|_{L_{x,v}^2}^2]&=\E[\|S(t)f_0\|_{L_{x,v}^2}^2]+\sum_{k=1}^{K}\int_{0}^{t}\|S(t-s)\sigma_k\|_{L_{x,v}^2}^2\,\text ds\\
&=\E[\|f_0\|_{L_{x,v}^2}^2]+\sum_{k=1}^{K}\int_{0}^{t}\|\sigma_k\|_{L_{x,v}^2}^2\,\text ds\\
&=\E[\|f_0\|_{L_{x,v}^2}^2]+t\sum_{k=1}^{K}\|\sigma_k\|_{L_{x,v}^2}^2.
\end{align*}
\end{proof}

\subsection{Splitting scheme}

Let us now describe the proposed numerical scheme for the temporal discretization of the SPDE~\eqref{eq:SPDE-add}. The strategy is to use a splitting method in order to treat first the deterministic part, second the stochastic part. Using the Lie--Trotter integrator~\eqref{eq:LTdeter} for the deterministic part yields the following scheme: given the initial value $f_0$ and the time-step size $\tau\in(0,1)$, set $f_0^\ad=f_0$ and for any nonnegative integer $n\ge 0$ set
\begin{equation}\label{eq:LT-add}
f_{n+1}^\ad=S^2(\tau)S^1(\tau)f_n^\ad+\delta W_n=S^2(\tau)S^1(\tau)f_n^\ad+\sum_{k=1}^{K}\delta \beta_{n,k}\sigma_k ,
\end{equation}
where the Wiener increments $\delta W_n$ and $\delta\beta_{n,k}$ are given by~\eqref{eq:WincW} and~\eqref{eq:WincBeta} respectively, see  Section~\ref{sec:notation}. The scheme~\eqref{eq:LT-add} can also be written as follows: for all $n\ge 0$, $x\in\T^d$ and $v\in\R^d$ one has
\begin{equation}\label{eq:LT-add_xv}
f_{n+1}^\ad(x,v)=f_n^\ad(x-\tau v-\tau^2E(x),v-\tau E(x))+\sum_{k=1}^{K}\sigma_k(x,v) \delta \beta_{n,k}.
\end{equation}

The main result of this section states that the Lie--Trotter splitting scheme~\eqref{eq:LT-add} preserves the trace formula from Proposition~\ref{propo:trace}
for all times and for any value of the time-step size $\tau$.
\begin{proposition}\label{propo:tracenum}
Assume that $\sigma_k\in L_{x,v}^2$ for all $1\le k\le K$, and that $f_0\in L_{x,v}^2$. Let $\bigl(f_n^\ad\bigr)_{n\ge 0}$ be given by the Lie--Trotter splitting
scheme~\eqref{eq:LT-add} with time-step size $\tau\in(0,1)$. Then for any nonnegative integer $n\ge 0$, one has $f_n^\ad\in L^2(\Omega, L_{x,v}^2)$ and
\begin{equation}\label{eq:tracenum}
\E[\|f_n^\ad\|_{L_{x,v}^2}^2]=\|f_0\|_{L_{x,v}}^2+t_n\sum_{k=1}^{K}\|\sigma_k\|_{L_{x,v}^2}^2,
\end{equation}
where $t_n=n\tau$.
\end{proposition}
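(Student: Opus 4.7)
The plan is to prove the trace formula~\eqref{eq:tracenum} by induction on $n$, exploiting three ingredients that are already available: the isometry property of the linear operators $S^1(\tau)$ and $S^2(\tau)$ on $L_{x,v}^2$ (established in Section~\ref{sec:deterministic}), the independence of the Wiener increments $\delta\beta_{n,k}$ from the iterate $f_n^{\mathrm{ad}}$ (which is $\mathcal{F}_{t_n}$-measurable), and the elementary identity $\E[\delta\beta_{n,k}\delta\beta_{n,\ell}]=\tau\,\mathbf{1}_{k=\ell}$.

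The base case $n=0$ is immediate since $f_0^{\mathrm{ad}}=f_0$ is deterministic and $t_0=0$. For the inductive step, assuming $f_n^{\mathrm{ad}}\in L^2(\Omega,L_{x,v}^2)$ and~\eqref{eq:tracenum}, I would take the squared $L_{x,v}^2$-norm of the right-hand side of~\eqref{eq:LT-add} and expand:
\[
\|f_{n+1}^{\mathrm{ad}}\|_{L_{x,v}^2}^2=\|S^2(\tau)S^1(\tau)f_n^{\mathrm{ad}}\|_{L_{x,v}^2}^2+2\sum_{k=1}^K\delta\beta_{n,k}\langle S^2(\tau)S^1(\tau)f_n^{\mathrm{ad}},\sigma_k\rangle_{L_{x,v}^2}+\Bigl\|\sum_{k=1}^K\delta\beta_{n,k}\sigma_k\Bigr\|_{L_{x,v}^2}^2.
\]
Taking expectations term by term: the first term equals $\E[\|f_n^{\mathrm{ad}}\|_{L_{x,v}^2}^2]$ by the isometry property; the cross term vanishes because $f_n^{\mathrm{ad}}$ is $\mathcal{F}_{t_n}$-measurable while the increments $\delta\beta_{n,k}$ are independent of $\mathcal{F}_{t_n}$ and have mean zero; the last term, after expansion and using $\E[\delta\beta_{n,k}\delta\beta_{n,\ell}]=\tau\,\mathbf{1}_{k=\ell}$, contributes exactly $\tau\sum_{k=1}^K\|\sigma_k\|_{L_{x,v}^2}^2$. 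Combining with the inductive hypothesis yields~\eqref{eq:tracenum} at step $n+1$, since $t_{n+1}=t_n+\tau$.

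I do not expect any serious obstacle here: the proof is essentially a discrete-time analogue of the second proof of Proposition~\ref{propo:trace}, with the It\^o isometry replaced by the independence of the Gaussian increments. The only points that deserve a brief justification are the $L^2(\Omega,L_{x,v}^2)$ integrability of each iterate (which follows inductively because $\sigma_k\in L_{x,v}^2$ and the $\delta\beta_{n,k}$ have finite second moments) and the exchange of expectation with the spatial integral in the cross term, which is legitimate by Fubini once square-integrability has been verified.
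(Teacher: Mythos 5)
Your argument is correct and coincides with the paper's own (second) proof of Proposition~\ref{propo:tracenum}: both rely on the independence of $f_n^\ad$ from the increments $\bigl(\delta\beta_{n,k}\bigr)_{1\le k\le K}$ together with the isometry property of $S^1(\tau)$ and $S^2(\tau)$ on $L_{x,v}^2$, and your expansion of the square merely makes explicit the cross-term cancellation that the paper leaves implicit. No issues.
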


To obtain the trace formula~\eqref{eq:tracenum}, it suffices to prove that for any nonnegative integer $n\ge 0$ one has
\[
\E[\|f_{n+1}^\ad\|_{L_{x,v}^2}^2]=\E[\|f_n^\ad\|_{L_{x,v}^2}^2]+\tau \sum_{k=1}^{K}\|\sigma_k\|_{L_{x,v}^2}^2.
\]
Like for Proposition~\ref{propo:trace}, two proofs are given below.
\begin{proof}[First proof of Proposition~\ref{propo:tracenum}]
Observe that for all $x\in\T^d$ and $v\in\R^d$ the random variable $f_n^\ad(x-\tau v-\tau^2E(x),v-\tau E(x))$ and the Gaussian random variable $\bigl(\delta\beta_{n,k}\bigr)_{1\le k\le K}$ are independent. Using the expression~\eqref{eq:LT-add_xv}, one obtains, for all $(x,v)\in\T^d\times\R^d$, the identity
\[
\E[f_{n+1}^\ad(x,v)^2]=\E[f_n^\ad(x-tv-t^2E(x),v-tE(x))^2]+\tau\sum_{k=1}^{K}\sigma_k(x,v)^2.
\]
Note that applying twice a change of variables formulas one has
\begin{align*}
\iint\E[f_n^\ad(x-\tau v-\tau^2E(x),v-\tau E(x))^2]\,\text dx\,\text dv&=\iint\E[f_n^\ad(x-\tau v,v)^2]\,\text dx\,\text dv\\
&=\iint\E[f_n^\ad(x,v)^2]\,\text dx\,\text dv.
\end{align*}
Integrating with respect to the variables $x$ and $v$, one then obtains
\begin{align*}
\E[\|f_{n+1}^\ad\|_{L_{x,v}^2}^2]&=\iint\E[f_{n+1}^\ad(x,v)^2]\,\text dx\,\text dv\\
&=\iint\E[f_n^\ad(x,v)^2]\,\text dx\,\text dv+\tau\sum_{k=1}^{K}\iint\sigma_k(x,v)^2\,\text dx\,\text dv\\
&=\E[\|f_n^\ad\|_{L_{x,v}^2}^2]+\tau \sum_{k=1}^{K}\|\sigma_k\|_{L_{x,v}^2}^2.
\end{align*}
\end{proof}

\begin{proof}[Second proof of Proposition~\ref{propo:tracenum}]
Observe that the random mapping $f_n^\ad$ and the Gaussian random variable $\bigl(\delta\beta_{n,k}\bigr)_{1\le k\le K}$ are independent. Using the isometry property for the linear operators $S^1(\tau),S^2(\tau):L_{x,v}^2\to L_{x,v}^2$, one then obtains
\begin{align*}
\E[\|f_{n+1}^\ad\|_{L_{x,v}^2}^2]&=\E[\|S^2(\tau)S^1(\tau)f_n^\ad\|_{L_{x,v}^2}^2]+\tau \sum_{k=1}^{K}\|\sigma_k\|_{L_{x,v}^2}^2\\
&=\E[\|f_n^\ad\|_{L_{x,v}^2}^2]+\tau \sum_{k=1}^{K}\|\sigma_k\|_{L_{x,v}^2}^2.
\end{align*}
\end{proof}

\subsection{Numerical experiments}

We begin the numerical experiments by illustrating the behavior of the linear Vlasov equation perturbed by additive noise~\eqref{eq:SPDE-add}, in dimension $d=1$. Like for Figure~\ref{fig:snap-det} in the deterministic case (see Section~\ref{sec:deterministic}), the initial value $f_0$ is given by~\eqref{eq:f0} and the vector field $E$ is given by~\eqref{eq:E}.

The noise perturbation is given either by
\begin{equation}\label{eq:sigma1cos}
\sigma_1(x,v)=\cos(v)\mathds{1}_{|v|\le 3}~,\quad x\in\T, v\in\R,
\end{equation}
or by
\begin{equation}\label{eq:sigma1sin}
\sigma_1(x,v)=\sin(v)\mathds{1}_{|v|\le 3}~,\quad x\in\T, v\in\R,
\end{equation}
with $K=1$ in both cases. In Figures~\ref{fig:snap-add-cos} and~\ref{fig:snap-add-sin} below, snapshots at the times $\{0,0.5,1,1.5,2,2.5\}$ of the numerical solution computed using the Lie--Trotter splitting scheme~\ref{eq:LT-add} are displayed, with $\sigma_1$ given by~\eqref{eq:sigma1cos} and~\eqref{eq:sigma1sin} respectively. The discretization parameters are given by $\delta x=\frac{1}{500}$, $\delta v=\frac{4\pi}{500}$, and $\tau=0.1$. One observes that the solutions behave differently from the deterministic case displayed in Figure~\ref{fig:snap-det}. One also observes major differences between Figures~\ref{fig:snap-add-cos} and~\ref{fig:snap-add-sin} which are due to imposing a noise perturbation which is either symmetric or skew-symmetric with respect to the velocity variable $v$. Recall that the average value $\E[f^\ad(t,x,v)]=f^\de(t,x,v)$ is solution of the deterministic PDE~\eqref{eq:Vlasov}, which justifies the persistence of the deterministic behavior and of the influence of the initial condition in the snapshots.

\begin{figure}[h]
\begin{subfigure}{.3\textwidth}
  \centering
\includegraphics[width=1\linewidth]{VlasovInit-eps-converted-to.pdf}
\end{subfigure}%
\begin{subfigure}{.3\textwidth}
  \centering
\includegraphics[width=1\linewidth]{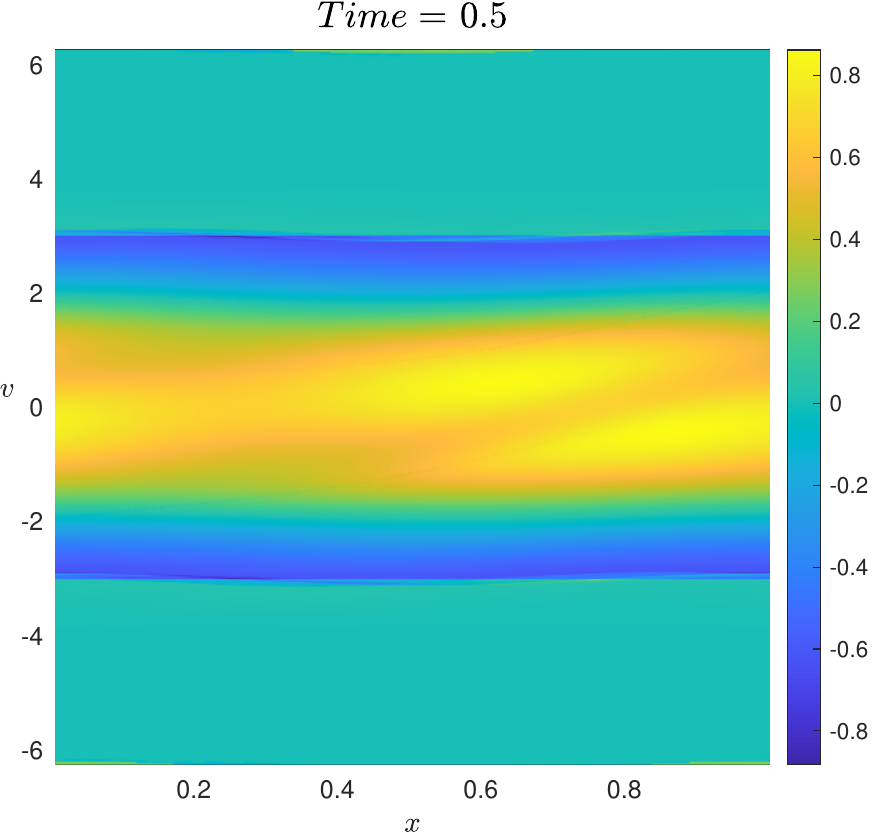}
\end{subfigure}%
\begin{subfigure}{.3\textwidth}
  \centering
  \includegraphics[width=1\linewidth]{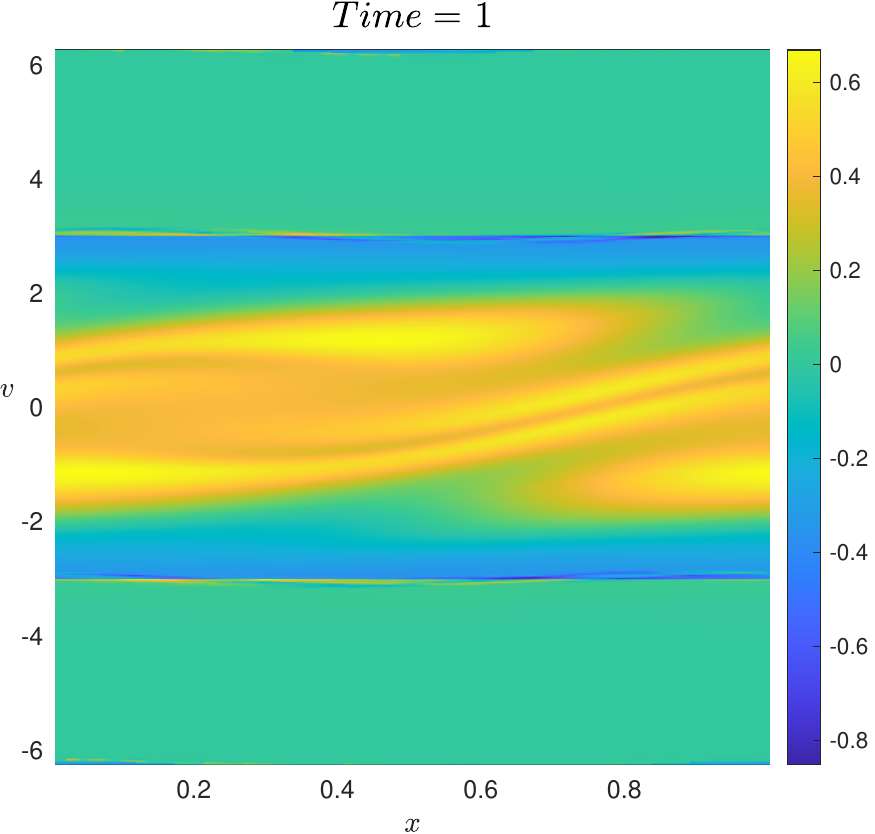}
\end{subfigure}\\[1ex]%
\begin{subfigure}{.3\textwidth}
  \centering
  \includegraphics[width=1\linewidth]{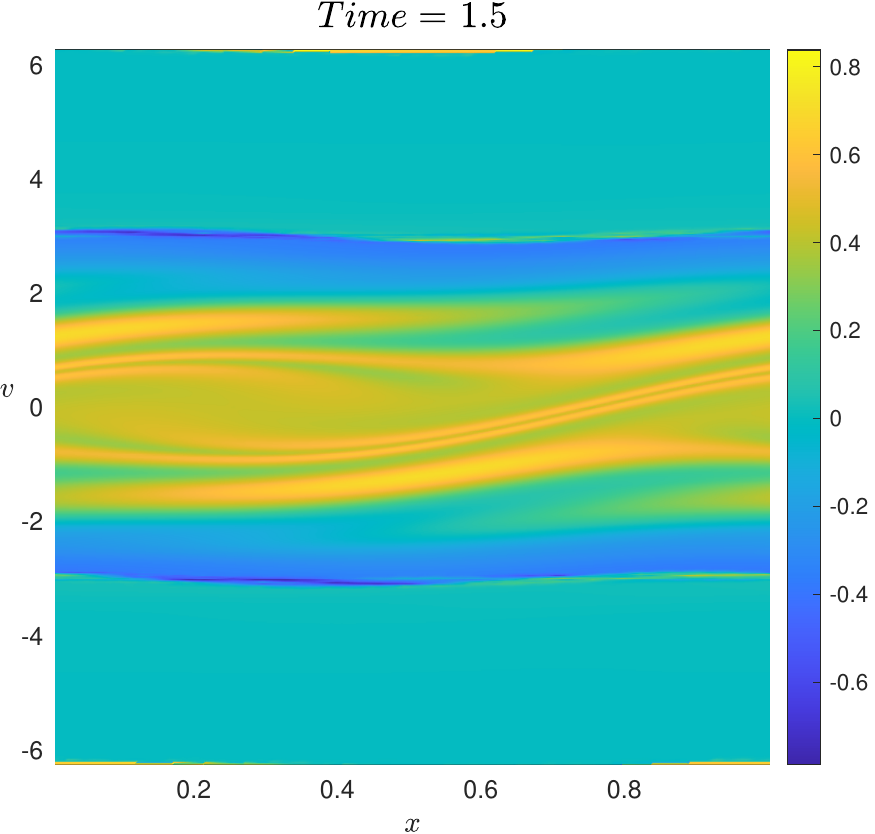}
\end{subfigure}
\begin{subfigure}{.3\textwidth}
  \centering
  \includegraphics[width=1\linewidth]{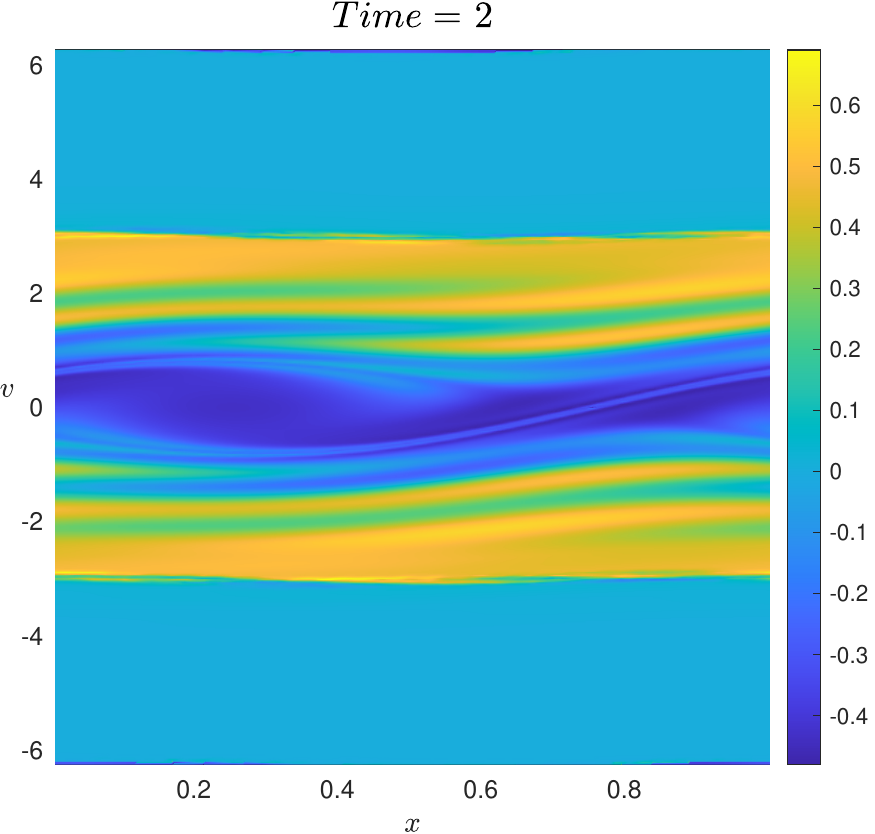}
\end{subfigure}
\begin{subfigure}{.3\textwidth}
  \centering
  \includegraphics[width=1\linewidth]{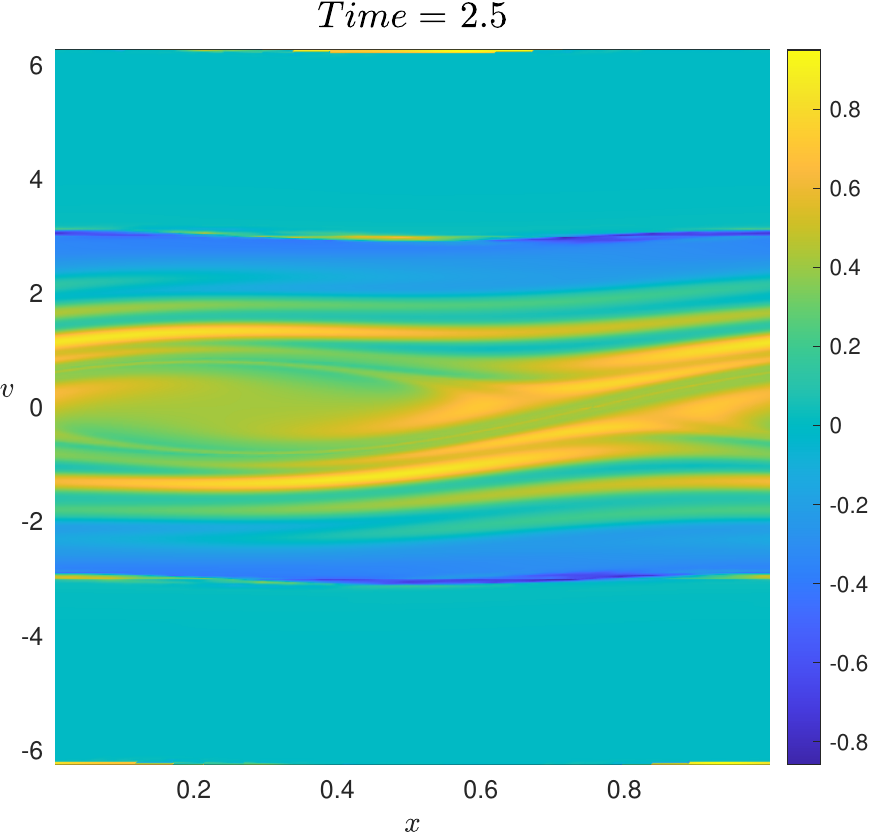}
\end{subfigure}
\caption{Snapshots: approximation of the solution of the stochastic PDE with additive noise~\eqref{eq:SPDE-add} with initial value $f_0$ given by~\eqref{eq:f0}, with $\sigma_1$ given by~\eqref{eq:sigma1cos} at times~$\{0,0.5,1,1.5,2,2.5\}$, using the Lie--Trotter splitting scheme~\eqref{eq:LT-add} with time-step size $\tau=0.1$.}
\label{fig:snap-add-cos}
\end{figure}

\begin{figure}[h]
\begin{subfigure}{.3\textwidth}
  \centering
\includegraphics[width=1\linewidth]{VlasovInit-eps-converted-to.pdf}
\end{subfigure}%
\begin{subfigure}{.3\textwidth}
  \centering
  \includegraphics[width=1\linewidth]{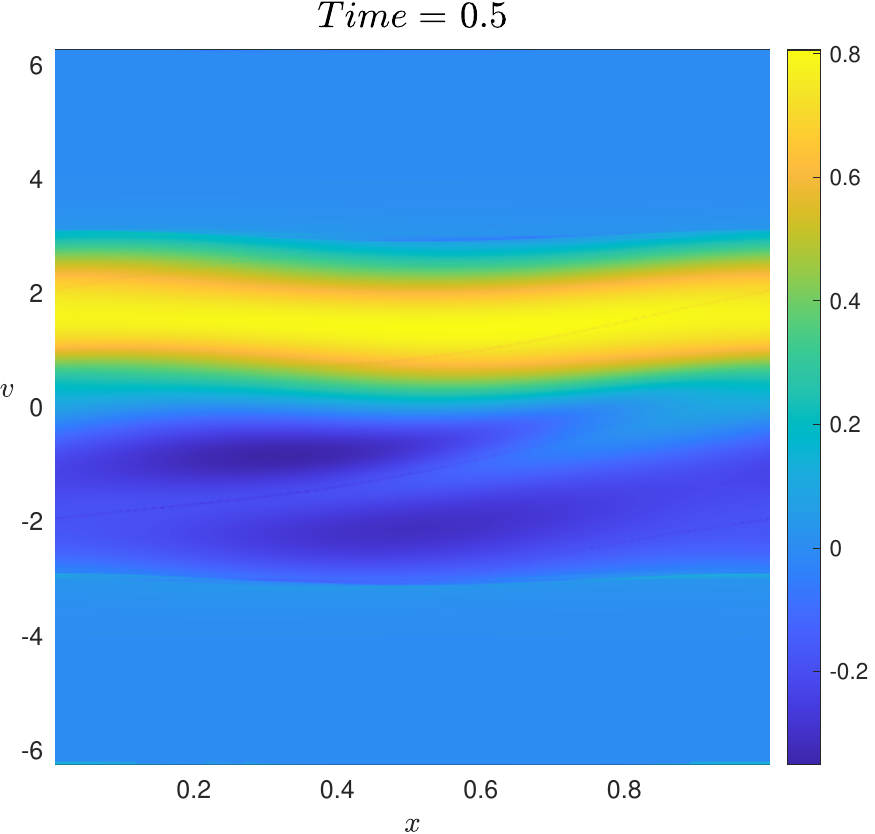}
\end{subfigure}%
\begin{subfigure}{.3\textwidth}
  \centering
  \includegraphics[width=1\linewidth]{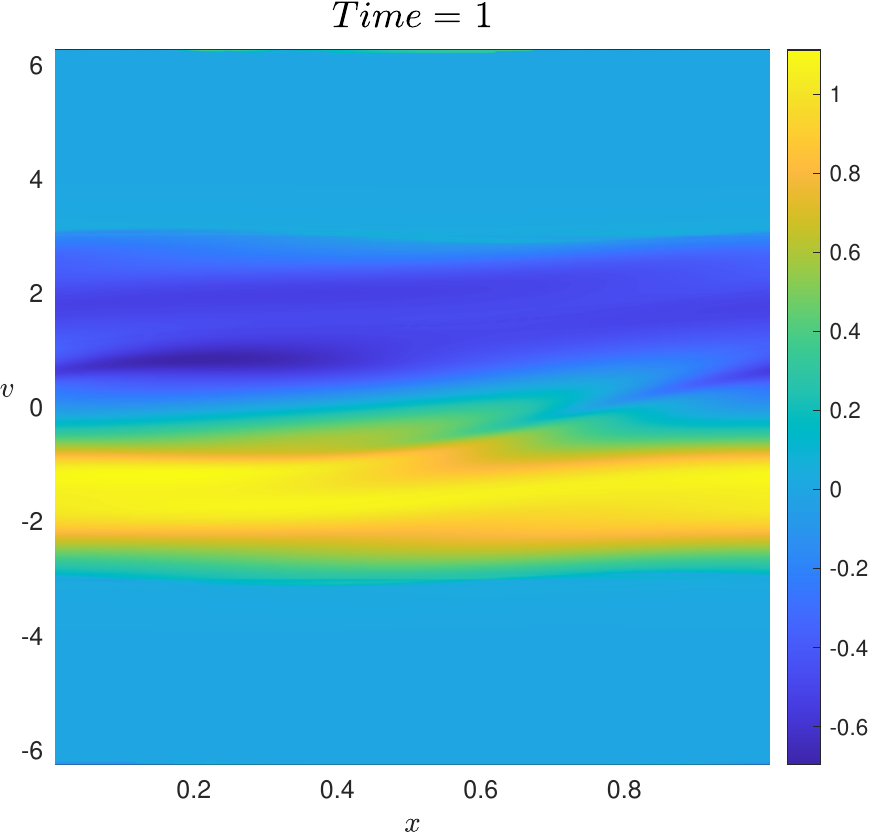}
\end{subfigure}\\[1ex]%
\begin{subfigure}{.3\textwidth}
  \centering
  \includegraphics[width=1\linewidth]{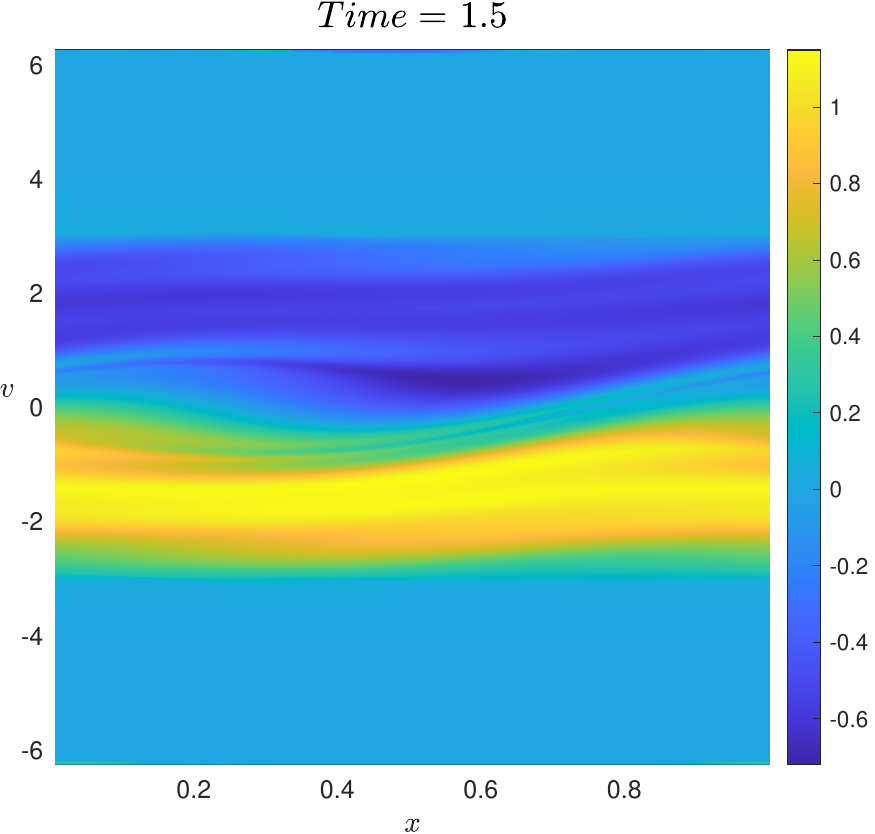}
\end{subfigure}
\begin{subfigure}{.3\textwidth}
  \centering
  \includegraphics[width=1\linewidth]{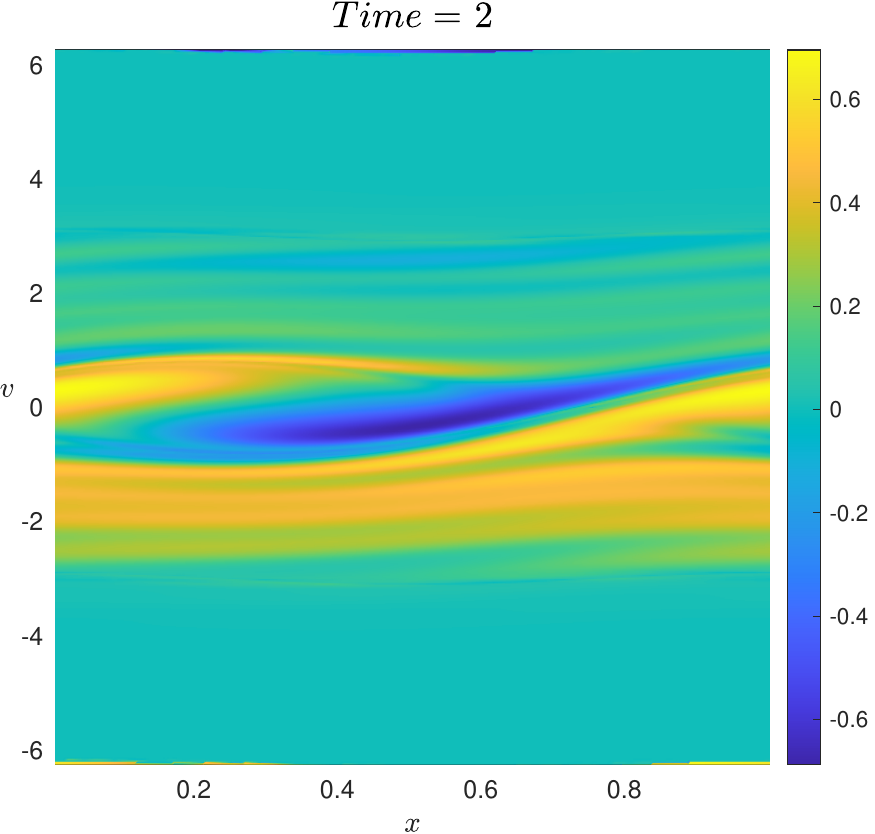}
\end{subfigure}
\begin{subfigure}{.3\textwidth}
  \centering
  \includegraphics[width=1\linewidth]{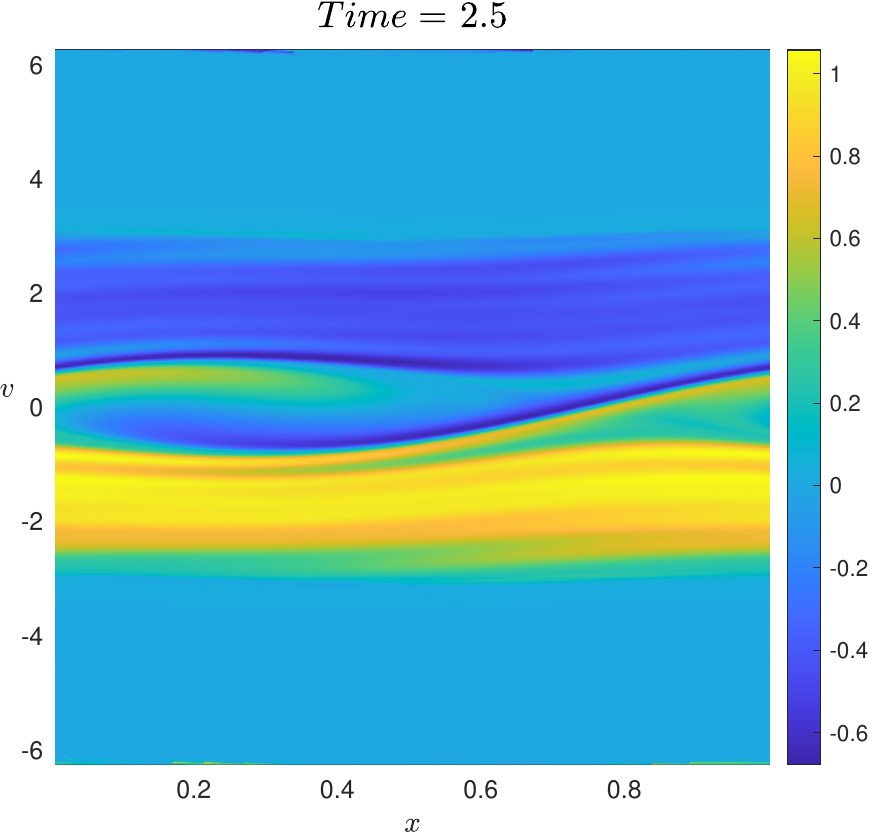}
\end{subfigure}
\caption{Snapshots: approximation of the solution of the stochastic PDE with additive noise~\eqref{eq:SPDE-add} with initial value $f_0$ given by~\eqref{eq:f0}, with $\sigma_1$ given by~\eqref{eq:sigma1sin} at times~$\{0,0.5,1,1.5,2,2.5\}$, using the Lie--Trotter splitting scheme~\eqref{eq:LT-add} with time-step size $\tau=0.1$.}
\label{fig:snap-add-sin}
\end{figure}

We continue these numerical experiments with Figure~\ref{fig:trace-add} in order to illustrate the trace formula~\eqref{eq:tracenum} from Proposition~\ref{propo:tracenum} for the $L^2_{x,v}$-norm of the Lie--Trotter splitting scheme~\eqref{eq:LT-add}. Let $d=1$ and $T=1$, the initial value $f_0$ is given by~\eqref{eq:f0} and the vector field $E$ is given by~\eqref{eq:E}. For the noise perturbation, one has either $K=1$ and
\begin{equation}\label{eq:sigma-addtrace1}
\sigma_1(x,v)=0.5\sin(v)\mathds{1}_{|v|\le 3},
\end{equation}
or $K=2$ and
\begin{equation}\label{eq:sigma-addtrace2}
\sigma_1(x,v)=0.5\e^{-v^2/2}\cos{(2\pi x)}~,\quad \sigma_2(x,v)=0.5\e^{-v^2/2}\sin{(2\pi x)},
\end{equation}
for all $x\in\T$ and $v\in\R$. The discretization parameters are given by $\delta x=\frac{1}{200}$, $\delta v=\frac{4\pi}{400}$, and $\tau=0.1$. The expectation in the trace formula~\eqref{eq:tracenum} is approximated by a standard Monte Carlo averaging procedure over $10^6$ independent samples. The exact line in Figure~\ref{fig:trace-add} corresponds to the trace formula~\eqref{eq:trace} from Proposition~\ref{propo:trace} satisfied by the exact solution. Even if Proposition~\ref{propo:tracenum} states that the Lie--Trotter splitting scheme~\eqref{eq:LT-add} preserves the trace formula at all times $t_n=n\tau$, some error is visible. This may be due to the discretization procedure with respect to the spatial and velocity variables $x$ and $v$, in particular since a truncation procedure for large $v$ is applied. We have verified that increasing the Monte Carlo sample size does not seem to reduce the error visible in Figure~\ref{fig:trace-add}.

\begin{figure}[h]
\begin{subfigure}{.5\textwidth}
  \centering
  \includegraphics[width=.9\linewidth]{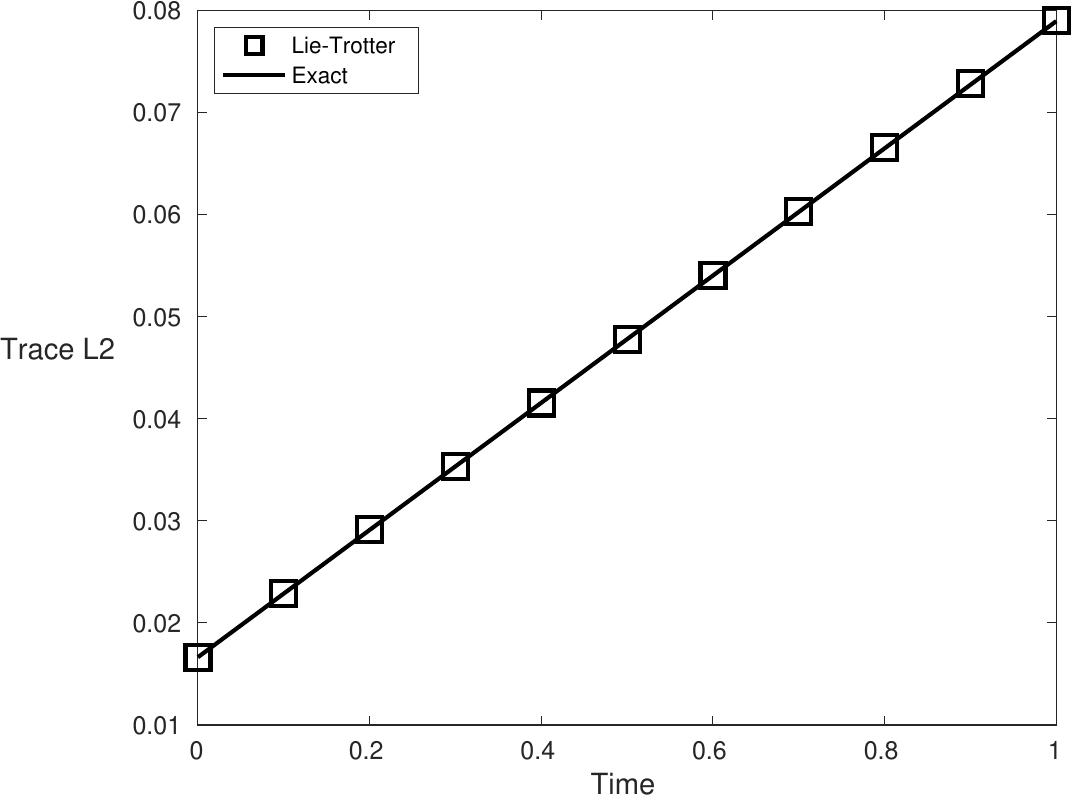}
  \caption{Noise given by~\eqref{eq:sigma-addtrace1}, $K=1$.}
\end{subfigure}%
\begin{subfigure}{.5\textwidth}
  \centering
  \includegraphics[width=.9\linewidth]{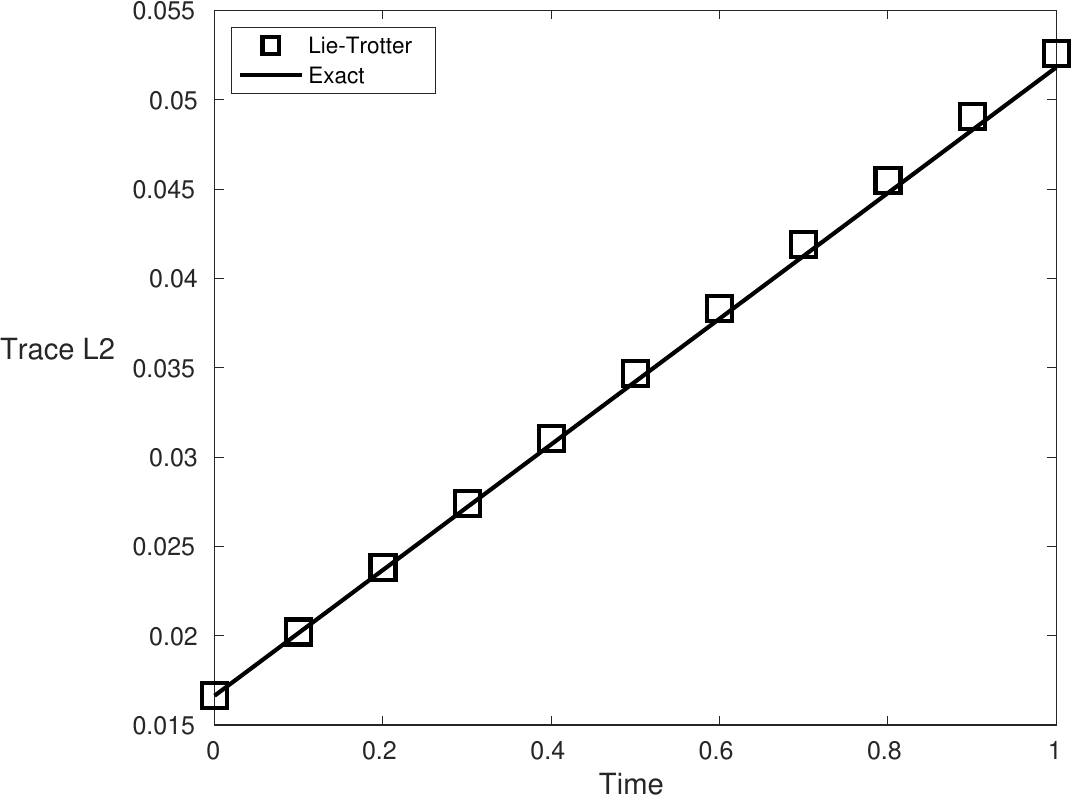}
  \caption{Noise given by~\eqref{eq:sigma-addtrace2}; $K=2$.}
\end{subfigure}
\caption{Trace formula: illustration of Proposition~\ref{propo:tracenum} when applying the Lie--Trotter splitting scheme~\eqref{eq:LT-add} to the SPDE with additive noise~\eqref{eq:SPDE-add} with time-step size $\tau=0.1$.}
\label{fig:trace-add}
\end{figure}

The final experiment in the additive noise case is devoted to investigate the mean-square order of convergence of the Lie--Trotter splitting scheme~\eqref{eq:LT-add}. In Figure~\ref{fig:ms-add}, a loglog plot displays how the error
\[
\sqrt{\sup_{x,v}\E[|f^{\ad}(x,v)-f^{\ad, \rm ref}(x,v)|^2]}
\]
converges to $0$ when $\tau$ decreases, where $x,v$ are grid points, the reference solution $f^{\ad, \rm ref}$ is computed using the splitting scheme with time-step size $\tau_{\rm ref}=2^{-14}$. The time-step size $\tau$ takes values in $\{2^{-7},\ldots,2^{-13}\}$, and the expectation is computed using a Monte Carlo averaging procedure over $500$ independent samples. Note that the sample size used to illustrate the behaviour of the mean-square error is much smaller than the sample size used for the illustration of the trace formula in Figure~\ref{fig:trace-add} above. This is due to the fact that the variance also decreases when $\tau$ decreases. We have verified that increasing the Monte Carlo sample size does not significantly modify the behaviour of the mean-square error observed below. The discretization parameters are $\delta x=\frac{1}{100}$, $\delta v=\frac{4\pi}{200}$. The final time is given by $T=0.5$, whereas $f_0$ and $E$ are again given by~\eqref{eq:f0} and~\eqref{eq:E} respectively. Like for Figure~\ref{fig:trace-add} above, the noise is given by~\eqref{eq:sigma-addtrace1} or~\eqref{eq:sigma-addtrace2}. Based on these numerical experiments, we conjecture that the order of mean-square convergence of the scheme~\eqref{eq:LT-add} is equal to $1$. We leave the rigorous verification of this conjecture for future works.

\begin{figure}[h]
\begin{subfigure}{.5\textwidth}
  \centering
  \includegraphics[width=.9\linewidth]{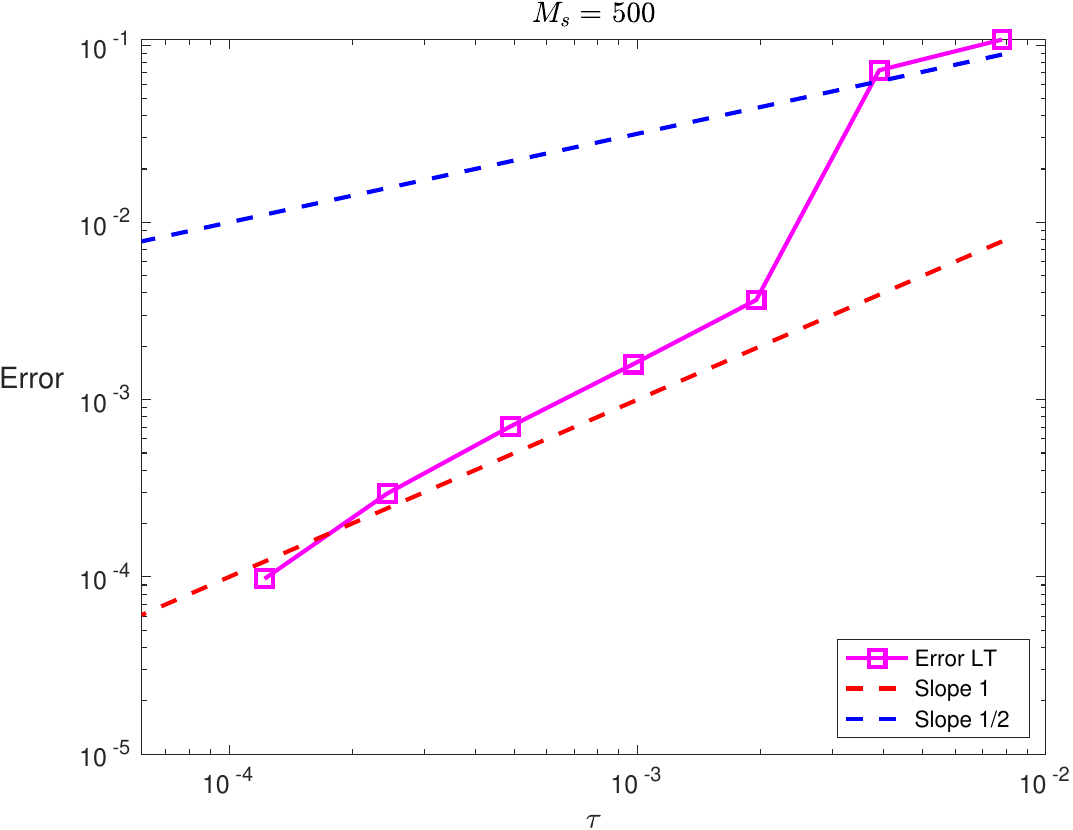}
  \caption{Noise given by~\eqref{eq:sigma-addtrace1}, $K=1$.}
\end{subfigure}%
\begin{subfigure}{.5\textwidth}
  \centering
  \includegraphics[width=.9\linewidth]{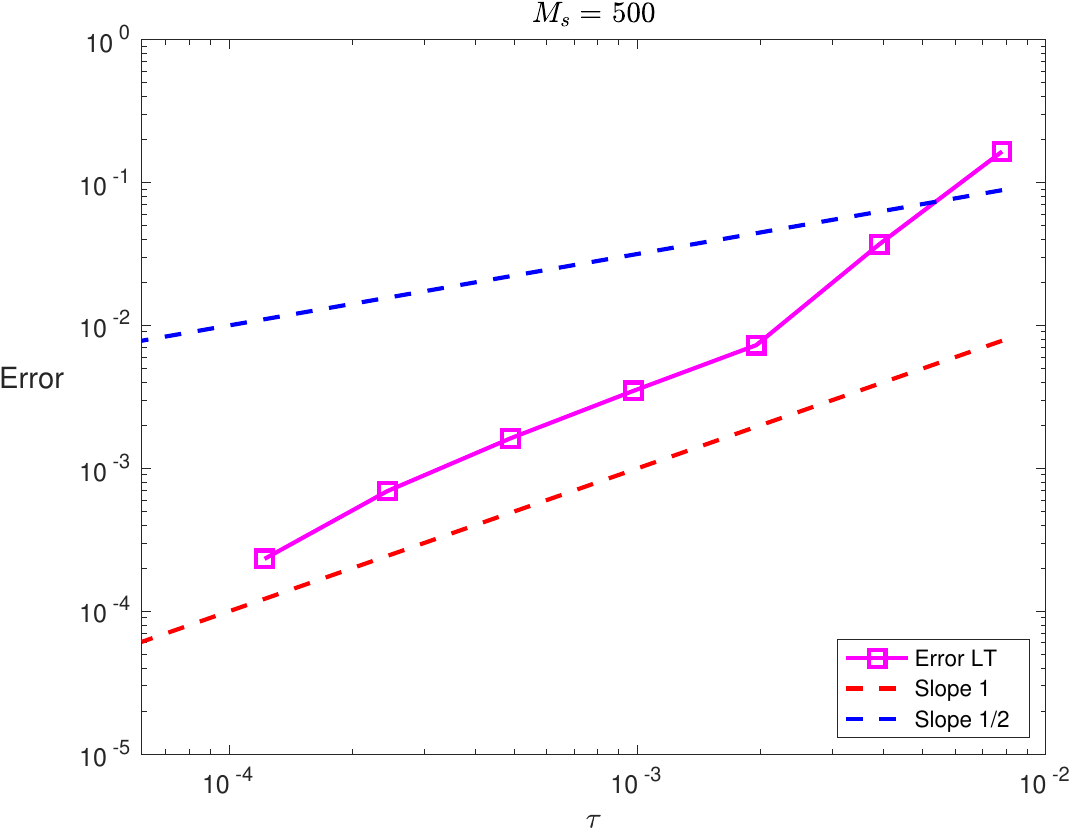}
  \caption{Noise given by~\eqref{eq:sigma-addtrace2}, $K=2$.}
\end{subfigure}
 \caption{Mean-square errors: Lie--Trotter scheme~\eqref{eq:LT-add} applied to the SPDE with additive noise~\eqref{eq:SPDE-add} driven by one-dimensional noise ($K=1$, left) and two-dimensional noises ($K=2$, right).}
\label{fig:ms-add}
\end{figure}

\section{The stochastic linear Vlasov equation perturbed by multiplicative noise}\label{sec:multiplicative}

In this section, we consider stochastic perturbations of the Vlasov equation~\eqref{eq:Vlasov} where the noise is multiplicative. In the analysis and applications of stochastic (partial) differential equations, it is well-known that several interpretations of multiplicative noise perturbations are possible. The It\^o interpretation of the noise is considered in Section~\ref{sec:multiplicativeIto}, then the Stratonovich interpretation is considered in Section~\ref{sec:multiplicativeStrato}. The objective of this section is to explain how to construct numerical schemes
which are consistent with the two possible interpretations of the multiplicative noise, and to investigate which properties of the exact solution can be preserved at the discrete level.

In this section, the following condition is imposed: for all $1\le k\le K$, the mapping $(x,v)\mapsto \sigma_k(x,v)$ is bounded. Some arguments below hold assuming in addition
that there exists a real number $\sigma$ such
that the mapping $\sum_{k=1}^{K}\sigma_{k}^2$ is constant equal to $\sigma^2$:
\begin{equation}\label{eq:conditionsigma2constant}
\sum_{k=1}^{K}\sigma_k(x,v)^2=:\sigma^2~,\quad \forall (x,v)\in\T^d\times\R^d.
\end{equation}
For instance the condition~\eqref{eq:conditionsigma2constant} is satisfied if $d=1$, $K=2$, $\sigma_1(x,v)=\cos(2\pi x)$ and $\sigma_2(x,v)=\sin(2\pi x)$, with $\sigma=1$. Note that this two-dimensional noise leads to a different dynamics for the SPDE than the case $K=1$ and $\sigma_1(x,v)=\sigma=1$.

\subsection{It\^o interpretation}\label{sec:multiplicativeIto}

We consider the following linear Vlasov equation driven by multiplicative noise interpreted in the It\^o sense: for $t\ge 0$, $x\in\T^d$, $v\in \R^d$
\begin{equation}\label{eq:SPDE-mult-Ito}
\left\lbrace
\begin{aligned}
&\text df^\mI(t,x,v)+v\cdot\nabla_xf^\mI(t,x,v)\,\text dt+E(x)\cdot\nabla_v f^\mI(t,x,v)\,\text dt=f^\mI(t,x,v)\,\text dW(t,x,v),\\
&f^\mI(0,x,v)=f_0(x,v)~,
\end{aligned}
\right.
\end{equation}
where the noise is defined by~\eqref{eq:Wtxv}.

\subsubsection{Analysis and properties of the problem}
Like in the additive noise case (Section~\ref{sec:additive}), let us consider several ways to define and deal with solutions of the SPDE~\eqref{eq:SPDE-mult-Ito}.

On the one hand, using the group $\bigl(S(t)\bigr)_{t\in\R}$ of linear operators given by~\eqref{eq:S} in Section~\ref{sec:deterministic}, one can consider mild solutions: for all $t\ge 0$, one has
\begin{equation}\label{eq:SPDE-mult-Ito-solution1}
\begin{aligned}
f^\mI(t)&=S(t)f_0+\int_{0}^{t}S(t-s)f^\mI(s) \,\text dW(s)\\
&=S(t)f_0+\sum_{k=1}^{K}\int_{0}^{t}S(t-s)\bigl(f^\mI(s)\sigma_k\bigr)\,\text d\beta_k(s).
\end{aligned}
\end{equation}
On the other hand, using the expression~\eqref{eq:S} for the linear operator $S(t)$, one has for all $t\ge 0$, $x\in\T^d$ and $v\in\R^d$
\begin{equation}\label{eq:SPDE-mult-Ito-solution2}
f^\mI(t,x,v)=f_0(\phi_t^{-1}(x,v))+\sum_{k=1}^{K}\int_{0}^{t}f^\mI(s,\phi_{t-s}^{-1}(x,v))\sigma_k(\phi_{t-s}^{-1}(x,v))\,\text d\beta_k(s).
\end{equation}
Finally, the connection with the ordinary differential equation~\eqref{eq:ODE} can also be seen by applying the It\^o--Wentzell formula (see \autoref{sec:ItoWentzell}): if $t\mapsto (x_t,v_t)=\phi_t(x_0,v_0)$ is the solution of the ordinary differential equation~\eqref{eq:ODE} with arbitrary initial value $(x_0,v_0)\in\T^d\times\R^d$, and if the solution of~\eqref{eq:SPDE-mult-Ito} is sufficiently regular, then the stochastic process $t\ge 0\mapsto f^\mI(t,x_t,v_t)$ satisfies
\begin{equation}\label{eq:SPDE-mult-Ito-solutionIW}
\text df^\mI(t,x_t,v_t)=\sum_{k=1}^{K}f^\mI(t,x_t,v_t)\sigma_k(x_t,v_t)\,\text d\beta_k(t).
\end{equation}
The formula~\eqref{eq:SPDE-mult-Ito-solutionIW} allows to retrieve the expression~\eqref{eq:SPDE-mult-Ito-solution2} of $f^\mI(t,x,v)$ above by writing $(x_t,v_t)=\phi_t(x,v)$.

Let us now describe the properties of the solutions of the SPDE~\eqref{eq:SPDE-mult-Ito}.
\begin{proposition}\label{propo:multIto}
Let $\bigl(f^\mI(t)\bigr)_{t\ge 0}$ be the solution of the SPDE~\eqref{eq:SPDE-mult-Ito} with (non-random) initial value $f_0$. One has the following properties.
\begin{itemize}
\item \emph{Preservation of positivity}. Assume that $f_0(x,v)\ge 0$ for all $(x,v)\in\T^d\times\R^d$.
Then one has $f^\mI(t,x,v)\ge 0$ almost surely for all $t\ge 0$ and $(x,v)\in\T^d\times\R^d$.
\item \emph{Preservation of the expected mass}. Assume that $f_0\in L_{x,v}^1$. Then almost surely one has $f^\mI(t)\in L_{x,v}^1$ for all $t\ge 0$, and
\[
\iint \E[f^\mI(t,x,v)]\,\text dx\,\text dv=\iint f_0(x,v)\,\text dx\,\text dv.
\]
\item \emph{Evolution law for the $L^2$ norm}. Assume that $f_0\in L_{x,v}^2$ and that the condition~\eqref{eq:conditionsigma2constant} is satisfied.
Then one has $f^\mI(t)\in L^2(\Omega,L_{x,v}^2)$ for all $t\ge 0$, and
\[
\E[\|f^\mI(t)\|_{L_{x,v}^2}^2]=\e^{\sigma^2 t}\|f_0\|_{L_{x,v}^2}^2.
\]

\end{itemize}
\end{proposition}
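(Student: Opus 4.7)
The plan is to reduce the SPDE~\eqref{eq:SPDE-mult-Ito} to a family of scalar linear SDEs along the characteristic curves of~\eqref{eq:ODE}, using the consequence~\eqref{eq:SPDE-mult-Ito-solutionIW} of the It\^o--Wentzell formula. Given $(x_0,v_0)\in\T^d\times\R^d$, set $(x_t,v_t)=\phi_t(x_0,v_0)$ and $g(t,x_0,v_0)=f^\mI(t,x_t,v_t)$. Then~\eqref{eq:SPDE-mult-Ito-solutionIW} is a geometric Brownian motion SDE whose unique solution is the Dol\'eans--Dade exponential
\[
g(t,x_0,v_0)=f_0(x_0,v_0)\exp\Bigl(M_t(x_0,v_0)-\tfrac12[M]_t(x_0,v_0)\Bigr),
\]
with $M_t(x_0,v_0)=\sum_{k=1}^{K}\int_0^t\sigma_k(x_s,v_s)\,\text d\beta_k(s)$ and $[M]_t(x_0,v_0)=\int_0^t\sum_{k=1}^{K}\sigma_k(x_s,v_s)^2\,\text ds$. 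Inverting the diffeomorphism gives $f^\mI(t,x,v)=g(t,\phi_t^{-1}(x,v))$, and every change of variables $(x,v)\leftrightarrow\phi_t^{-1}(x,v)$ used below is free of any Jacobian thanks to the volume preservation property recalled in Section~\ref{sec:deterministic}.

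Positivity then follows at once from the Dol\'eans--Dade formula, since the exponential factor is almost surely nonnegative and $f_0\ge 0$. For the expected mass, taking expectation in~\eqref{eq:SPDE-mult-Ito-solution2} kills the It\^o integrals (the boundedness of the $\sigma_k$ makes them true martingales), so $\E[f^\mI(t,x,v)]=f_0(\phi_t^{-1}(x,v))=f^\de(t,x,v)$; a Fubini step and the $L^1_{x,v}$-isometry of $S(t)$ conclude. For the $L^2$ evolution, the hypothesis~\eqref{eq:conditionsigma2constant} is crucial: it makes the bracket deterministic, $[M]_t=\sigma^2 t$. Squaring the Dol\'eans--Dade formula and isolating this deterministic piece yields
\[
g(t,x_0,v_0)^2=f_0(x_0,v_0)^2\,\e^{\sigma^2 t}\,\e^{2M_t(x_0,v_0)-2[M]_t(x_0,v_0)},
\]
and the last factor is itself a Dol\'eans--Dade exponential, namely that of the square-integrable martingale $2M$ (whose quadratic variation equals $4[M]_t=4\sigma^2 t$), hence has expectation $1$. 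Therefore $\E[g(t,x_0,v_0)^2]=\e^{\sigma^2 t}f_0(x_0,v_0)^2$, and integrating in $(x_0,v_0)$ and changing variables back to $(x,v)$ by volume preservation produces the announced identity $\E[\|f^\mI(t)\|_{L_{x,v}^2}^2]=\e^{\sigma^2 t}\|f_0\|_{L_{x,v}^2}^2$.

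The main technical difficulty will be the rigorous application of the It\^o--Wentzell formula, which presupposes enough spatial smoothness of $f^\mI$ for the derivatives entering~\eqref{eq:SPDE-mult-Ito-solutionIW} to make sense. A cleaner alternative for the $L^2$ statement, sidestepping regularity issues, is to apply the infinite-dimensional It\^o formula directly to $t\mapsto\|f^\mI(t)\|_{L_{x,v}^2}^2$ in the mild setting~\eqref{eq:SPDE-mult-Ito-solution1}: the transport operator $v\cdot\nabla_x+E\cdot\nabla_v$ is skew-adjoint on $L_{x,v}^2$ so its contribution vanishes, and the It\^o correction yields $\sum_{k=1}^{K}\iint f^\mI(t,x,v)^2\sigma_k(x,v)^2\,\text dx\,\text dv=\sigma^2\|f^\mI(t)\|_{L_{x,v}^2}^2$ under~\eqref{eq:conditionsigma2constant}. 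Taking expectation then gives the linear ODE $\frac{\text d}{\text dt}\E[\|f^\mI(t)\|_{L_{x,v}^2}^2]=\sigma^2\E[\|f^\mI(t)\|_{L_{x,v}^2}^2]$, which integrates to the claim.
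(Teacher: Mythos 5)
Your argument is correct and stays within the paper's general framework (reduction to scalar SDEs along the characteristics of~\eqref{eq:ODE} via the It\^o--Wentzell identity~\eqref{eq:SPDE-mult-Ito-solutionIW}, then volume preservation of $\phi_t$ to transfer integrals back), but the mechanics differ in two places. For positivity, the paper invokes a comparison principle for scalar SDEs, whereas you write down the Dol\'eans--Dade exponential explicitly; your version is more concrete and immediately gives strict positivity wherever $f_0>0$. For the $L^2$ evolution law, the paper applies It\^o's isometry to the representation~\eqref{eq:SPDE-mult-Ito-solution2}, obtains the integral equation
\[
\E[\|f^\mI(t)\|_{L_{x,v}^2}^2]=\|f_0\|_{L_{x,v}^2}^2+\sigma^2\int_0^t\E[\|f^\mI(s)\|_{L_{x,v}^2}^2]\,\text ds
\]
and solves it, while you compute the second moment pointwise and in closed form: since $s\mapsto\sigma_k(\phi_s(x_0,v_0))$ is a deterministic integrand, $M_t(x_0,v_0)$ is Gaussian with variance $[M]_t=\sigma^2 t$ under~\eqref{eq:conditionsigma2constant}, and $\E[\e^{2M_t-2[M]_t}]=1$; this buys you an exact formula for $\E[f^\mI(t,x,v)^2]$ without passing through an integral equation, and it also makes the almost sure finiteness claims ($f^\mI(t)\in L_{x,v}^1$ and $f^\mI(t)\in L^2(\Omega,L_{x,v}^2)$) transparent. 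The expected-mass step is essentially identical to the paper's. Your closing alternative for the $L^2$ law via the functional It\^o formula matches in spirit the paper's remark following the proposition, which uses the It\^o isometry on the mild formulation~\eqref{eq:SPDE-mult-Ito-solution1}. Both you and the paper treat the regularity needed to justify It\^o--Wentzell as a standing assumption rather than something to be proved, so there is no gap relative to the paper's own level of rigor.
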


\begin{proof}[Proof of Proposition~\ref{propo:multIto}]
\leavevmode
\begin{itemize}
\item Owing to the expression~\eqref{eq:SPDE-mult-Ito-solutionIW} of the solution and applying a comparison principle for solutions of stochastic differential equations, one has $f^\mI(t,x_t,v_t)\ge 0$ almost surely for all $t\ge 0$, and for any arbitrary initial value $(x_0,v_0)\in\T^d\times\R^d$. Choosing $(x_0,v_0)=\phi_t^{-1}(x,v)$ then yields $f^\mI(t,x,v)\ge 0$ almost surely for all $(t,x,v)\in \R^+\times\T^d\times\R^d$.

\item Owing to the expression~\eqref{eq:SPDE-mult-Ito-solution2} of the solution using the flow $\bigl(\phi_t\bigr)_{t\in\R}$, one has for all $t\ge 0$
\begin{align*}
\iint \E[f^{\mI}(t,x,v)]&\,\text dx\,\text dv=\iint f_0(\phi_t^{-1}(x,v))\,\text dx\,\text dv\\
&\quad+\sum_{k=1}^{K}\iint \E\Bigl[\int_{0}^{t}\sigma_k(\phi_{t-s}^{-1}(x,v))f^\mI(s,\phi_{t-s}^{-1}(x,v))\,\text d\beta_k(s)\Bigr]\,\text dx\,\text dv\\
&=\iint f_0(x,v)\,\text dx\,\text dv,
\end{align*}
since the expectation of the stochastic It\^o integral vanishes and since $\phi_{t}$ preserves the volume in $\T^d\times\R^d$ for all $t\ge 0$.
\item Owing to the expression~\eqref{eq:SPDE-mult-Ito-solution2} of the solution using the flow $\bigl(\phi_t\bigr)_{t\in\R}$, applying It\^o's isometry formula yields
\begin{align*}
\iint \E&[f^\mI(t,x,v)^2]\,\text dx\,\text dv=\iint f_0(\phi_t^{-1}(x,v))^2\,\text dx\,\text dv\\
&\quad+\sum_{k=1}^{K}\int_{0}^{t}\iint \E\bigl[\sigma_k(\phi_{t-s}^{-1}(x,v))^2f^{\mI}(s,\phi_{t-s}^{-1}(x,v))^2\bigr]\,\text dx\,\text dv\,\text d s\\
&=\iint f_0(x,v)^2\,\text dx\,\text dv+\sum_{k=1}^{K}\int_{0}^{t}\iint \E\bigl[\sigma_k(x,v)^2f^{\mI}(s,x,v)^2\bigr]\,\text dx\,\text dv\,\text d s,
\end{align*}
since the mapping $\phi_{t-s}$ preserves the volume in $\T^d\times\R^d$ for all $t\ge s\ge 0$. Using the condition~\eqref{eq:conditionsigma2constant} then yields the identity
\begin{align*}
\iint \E[f^\mI(t,x,v)^2]\,\text dx\,\text dv&=\iint f_0(\phi_t^{-1}(x,v))^2\,\text dx\,\text dv\\
&\quad+\sigma^2\int_{0}^{t}\iint \E\bigl[f^\mI(s,x,v)^2\bigr]
\,\text dx\,\text dv\,\text d s.
\end{align*}
This implies that for all $t\ge 0$ one has
\[
\E[\|f^\mI(t)\|_{L_{x,v}^2}^2]=\e^{\sigma^2 t}\|f_0\|_{L_{x,v}^2}^2.
\]
\end{itemize}
\end{proof}

\begin{remark}
The evolution law for the $L^2$ norm in Proposition~\ref{propo:multIto} can be proved by an alternative approach: using the expression~\eqref{eq:SPDE-mult-Ito-solution1} of the mild solution, and using the It\^o isometry formula in the Hilbert space $L_{x,v}^2$, one has
\begin{align*}
\E[\|f^\mI(t)\|_{L_{x,v}^2}^2]&=\|S(t)f_0\|_{L_{x,v}^2}^2+\sum_{k=1}^{K}\int_{0}^{t}\E[\|S(t-s)\bigl(f^\mI(s)\sigma_k\bigr)\|_{L_{x,v}^2}^2]\,\text ds\\
&=\|f_0\|_{L_{x,v}^2}^2+\sum_{k=1}^{K}\int_{0}^{t}\E[\|f^\mI(s)\sigma_k\|_{L_{x,v}^2}^2]\,\text ds,
\end{align*}
since the linear operator $S(t-s):L_{x,v}^2\to L_{x,v}^2$ is an isometry, for all $t\ge s\ge 0$. When the condition~\eqref{eq:conditionsigma2constant} is satisfied, one has
\[
\sum_{k=1}^{K}\|f^\mI(s)\sigma_k\|_{L_{x,v}^2}^2=\sigma^2\|f^\mI(s)\|_{L_{x,v}^2}^2
\]
and the conclusion is obtained as in the proof above.
\end{remark}

\subsubsection{Splitting scheme}
Let us now describe the proposed numerical scheme for the temporal discretization of the SPDE~\eqref{eq:SPDE-mult-Ito} driven by multplicative It\^o noise.
Like in the additive noise case presented in Section~\ref{sec:additive}, a Lie--Trotter splitting strategy is applied. The treatment of the deterministic part is not modified.  The auxiliary stochastic subsystem with multiplicative It\^o noise
\[
\text df(t,x,v)=f(t,x,v)\,\text dW(t,x,v)~,\quad (t,x,v)\in\R^+\times\T^d\times\R^d
\]
is solved exactly: for all $t\ge s\ge 0$, one has
\[
f(t,x,v)=\e^{\sum_{k=1}^{K}\sigma_{k}(\beta_k(t)-\beta_k(s))-\frac{(t-s)\sum_{k=1}^{K}\sigma_k(x,v)^2}{2}}f(s,x,v)~,\quad \forall~(x,v)\in\T^d\times\R^d.
\]
Using the Lie--Trotter integrator~\eqref{eq:LTdeter} for the deterministic part and combining the discretizations of the deterministic and stochastic parts yields the following scheme: given the initial value $f_0$ and the time-step size $\tau\in(0,1)$, set $f_0^\mI=f_0$ and for any nonnegative integer $n\ge 0$ set
\begin{equation}\label{eq:LT-mult-Ito}
\left\lbrace
\begin{aligned}
&\hat{f}_{n+1}^\mI=S^2(\tau)S^1(\tau)f_n^\mI\\
&f_{n+1}^\mI(x,v)=\e^{\sum_{k=1}^{K}\sigma_{k}(x,v)\delta\beta_{n,k}-\frac{\tau\sum_{k=1}^{K}\sigma_k(x,v)^2}{2}}\hat{f}_{n+1}^\mI(x,v)~,\quad x\in\T^d, v\in \R^d,
\end{aligned}
\right.
\end{equation}
where the Wiener increments $\delta\beta_{n,k}$ are given by~\eqref{eq:WincBeta} in Section~\ref{sec:notation}.

The Lie--Trotter splitting scheme~\eqref{eq:LT-mult-Ito} satisfies the same properties as the exact solution stated in Proposition~\ref{propo:multIto}.
\begin{proposition}\label{propo:LT-mult-Ito}
Let $\bigl(f_n^\mI\bigr)_{n\ge 0}$ be the solution of the Lie--Trotter splitting scheme~\eqref{eq:LT-mult-Ito} with initial value $f_0$.
One then has the following properties.
\begin{itemize}
\item \emph{Preservation of positivity}. Assume that $f_0(x,v)\ge 0$ for all $(x,v)\in\T^d\times\R^d$. Then for any time-step size $\tau\in(0,1)$,
one has $f_n^\mI(x,v)\ge 0$ almost surely for any nonnegative integer $n\ge 0$ and all $(x,v)\in\T^d\times\R^d$.
\item \emph{Preservation of the expected mass}. Assume that $f_0\in L_{x,v}^1$. Then almost surely one has $f_n^{\mI}\in L_{x,v}^1$ for all $n\ge 0$, and
\[
\iint \E[f_n^\mI(x,v)]\,\text dx\,\text dv=\iint f_0(x,v)\,\text dx\,\text dv.
\]
\item \emph{Evolution law for the $L^2$ norm}. Assume that $f_0\in L_{x,v}^2$ and that the condition~\eqref{eq:conditionsigma2constant} is satisfied. Then one has $f_n^\mI\in L^2(\Omega,L_{x,v}^2)$ for all $n\ge 0$, and
\[
\E[\|f_n^{\mI}\|_{L_{x,v}^2}^2]=\e^{\sigma^2 t_n}\|f_0\|_{L_{x,v}^2}^2,
\]
where $t_n=n\tau$.
\end{itemize}
\end{proposition}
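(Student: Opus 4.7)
The plan is to prove each of the three properties by induction on $n$, exploiting two key structural features of the scheme~\eqref{eq:LT-mult-Ito}. First, the intermediate iterate $\hat f_{n+1}^\mI=S^2(\tau)S^1(\tau)f_n^\mI$ is $\mathcal{F}_{t_n}$-measurable and, by the deterministic results recalled in Section~\ref{sec:deterministic}, the operators $S^1(\tau)$ and $S^2(\tau)$ preserve positivity, integrals, and the $L_{x,v}^2$ norm. Second, the multiplicative random factor
\[
M_n(x,v):=\exp\Bigl(\sum_{k=1}^{K}\sigma_k(x,v)\delta\beta_{n,k}-\frac{\tau}{2}\sum_{k=1}^{K}\sigma_k(x,v)^2\Bigr)
\]
is almost surely strictly positive, is independent of $\mathcal{F}_{t_n}$ (hence of $\hat f_{n+1}^\mI$), and for each fixed $(x,v)$ the random variable $\sum_{k=1}^{K}\sigma_k(x,v)\delta\beta_{n,k}$ is a centered Gaussian with variance $\tau\sum_{k=1}^{K}\sigma_k(x,v)^2$. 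The standard log-normal moment identities then give $\E[M_n(x,v)]=1$ and $\E[M_n(x,v)^2]=\e^{\tau\sum_{k=1}^{K}\sigma_k(x,v)^2}$.

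With these observations the positivity property is immediate: if $f_n^\mI\ge 0$ almost surely then $\hat f_{n+1}^\mI\ge 0$ by the deterministic preservation, and multiplication by the almost surely positive factor $M_n$ gives $f_{n+1}^\mI\ge 0$ almost surely. For the expected mass, the independence of $M_n$ and $\hat f_{n+1}^\mI$ at each $(x,v)$ yields
\[
\E[f_{n+1}^\mI(x,v)]=\E[M_n(x,v)]\,\E[\hat f_{n+1}^\mI(x,v)]=\E[\hat f_{n+1}^\mI(x,v)];
\]
integrating over $(x,v)$ and invoking the volume preservation by $\phi_\tau^1$ and $\phi_\tau^2$, which gives $\iint \hat f_{n+1}^\mI(x,v)\,\text dx\,\text dv=\iint f_n^\mI(x,v)\,\text dx\,\text dv$, closes the induction step.

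For the evolution law of the $L_{x,v}^2$ norm, the same independence argument gives, pointwise in $(x,v)$,
\[
\E[f_{n+1}^\mI(x,v)^2]=\E[M_n(x,v)^2]\,\E[\hat f_{n+1}^\mI(x,v)^2].
\]
Under the hypothesis~\eqref{eq:conditionsigma2constant}, $\E[M_n(x,v)^2]=\e^{\tau\sigma^2}$ is constant in $(x,v)$; integrating over $(x,v)$ and using the $L_{x,v}^2$ isometry of $S^1(\tau)$ and $S^2(\tau)$ then gives
\[
\E[\|f_{n+1}^\mI\|_{L_{x,v}^2}^2]=\e^{\tau\sigma^2}\,\E[\|\hat f_{n+1}^\mI\|_{L_{x,v}^2}^2]=\e^{\tau\sigma^2}\,\E[\|f_n^\mI\|_{L_{x,v}^2}^2],
\]
and iterating from $n=0$ produces $\e^{\sigma^2 t_n}\|f_0\|_{L_{x,v}^2}^2$.

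I do not anticipate a genuine obstacle: the argument faithfully mirrors the continuous-time proof of Proposition~\ref{propo:multIto}, with the log-normal factor $M_n$ playing the role of the stochastic It\^o integral and with Gaussian moment computations replacing the It\^o isometry. The only point worth highlighting is that the shift $-\frac{\tau}{2}\sum_{k}\sigma_k^2$ in the exponent is precisely the It\^o correction that ensures $\E[M_n]=1$, so that mass is preserved exactly at the discrete level for arbitrary $\sigma_k$; the condition~\eqref{eq:conditionsigma2constant} enters only for the $L_{x,v}^2$ evolution law, exactly as in the continuous case.
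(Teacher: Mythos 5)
Your proof is correct and follows essentially the same route as the paper's: induction on $n$, positivity of the exponential factor combined with the deterministic positivity preservation, independence of the Gaussian increments from $\hat f_{n+1}^{\mI}$, the log-normal moment identities $\E[M_n]=1$ and $\E[M_n^2]=\e^{\tau\sum_k\sigma_k^2}$, and the integral-preservation/isometry properties of $S^1(\tau)$ and $S^2(\tau)$. No gaps; your closing remark about the It\^o correction ensuring exact discrete mass conservation for arbitrary $\sigma_k$ is a nice observation that the paper leaves implicit.
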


Concerning the positivity-preserving property, a similar splitting scheme has been proposed in the recent work~\cite{b23} for another class of SPDEs.

\begin{proof}[Proof of Proposition~\ref{propo:LT-mult-Ito}]
\leavevmode
\begin{itemize}
\item The proof of the positivity preserving property is performed by recursion. It is satisfied if $n=0$ since $f_0^\mI=f_0$. Assume that $f_n^{\mI}(x,v)\ge 0$ almost surely for all $(x,v)\in\T^d\times\R^d$. Owing to the positivity preserving property for the deterministic problem (see Section~\ref{sec:deterministic}), one has for all $(x,v)\in\T^d\times\R^d$
\begin{align*}
\hat{f}_{n+1}^\mI(x,v)&=\bigl(S^2(\tau)S^1(\tau)f_n^\mI\bigr)(x,v)=f_n^\mI(x-tv-t^2E(x),v-tE(x))\ge 0\\
f_{n+1}^\mI(x,v)&=\e^{\sum_{k=1}^{K}\sigma_{k}\delta\beta_{n,k}-\frac{(t-s)\sum_{k=1}^{K}\sigma_k(x,v)^2}{2}}\hat{f}_{n+1}^\mI(x,v)\ge 0.
\end{align*}
Therefore one has $f_{n+1}^\mI(x,v)\ge 0$ almost surely for all $(x,v)\in\T^d\times\R^d$.
\item Observe that the random variables $\hat{f}_{n+1}^\mI(x,v)$ and $\bigl(\delta\beta_{n,k}\bigr)_{1\le k\le K}$ are independent.
As a result, using the well-known expression for the exponential moments of Gaussian random variables, one has
\begin{align*}
\iint \E[f_{n+1}^{\mI}(x,v)]\,\text dx\,\text dv&=\iint \E[\e^{\sum_{k=1}^{K}\delta\beta_{n,k}\sigma_{k}(x,v)-\frac{\tau}{2}\sum_{k=1}^{K}\sigma_{k}(x,v)^2}]\E[\hat{f}_{n+1}^\mI(x,v)]\,\text dx\,\text dv\\
&=\iint \E[\hat{f}_{n+1}^\mI(x,v)]\,\text dx\,\text dv.
\end{align*}
Finally, one has $\hat{f}_{n+1}^\mI=S^2(\tau)S^1(\tau)f_n^\mI$ owing to~\eqref{eq:LT-mult-Ito}, where the linear operators $S^1(\tau)$ and $S^2(\tau)$ preserve the integral, thus one obtains the identity
\[
\iint \E[f_{n+1}^\mI(x,v)]\,\text dx\,\text dv=\iint \E[\hat{f}_{n+1}^\mI(x,v)]\,\text dx\,\text dv=\iint \E[f_{n}^\mI(x,v)]\,\text dx\,\text dv.
\]
\item Applying the same arguments as above, one obtains
\begin{align*}
\E[\|f_{n+1}^\mI\|_{L_{x,v}^2}^2]&=\iint \E[\e^{2\sum_{k=1}^{K}\sigma_k(x,v)\delta\beta_{n,k}-\tau\sum_{k=1}^{K}\sigma_k(x,v)^2}]\E[\hat{f}_{n+1}^\mI(x,v)^2] \,\text dx \,\text dv\\
&=\iint \e^{\tau\sum_{k=1}^{K}\sigma_k(x,v)^2}\E[\hat{f}_{n+1}^\mI(x,v)^2] \,\text dx\, \text dv.
\end{align*}
Then using the condition~\eqref{eq:conditionsigma2constant} and the expression $\hat{f}_{n+1}^\mI=S^2(\tau)S^1(\tau)f_n^\mI$ gives
\[
\E[\|f_{n+1}^\mI\|_{L_{x,v}^2}^2]=\e^{\sigma^2\tau}\E[\|\bigl(S^2(\tau)S^1(\tau)\bigr)f_n^\mI\|_{L_{x,v}^2}^2]=\e^{\sigma^2\tau}\E[\|f_n^\mI\|_{L_{x,v}^2}^2],
\]
using the fact that $S^1(\tau)$ and $S^2(\tau)$ are isometries from $L_{x,v}^2$ to $L_{x,v}^2$. The evolution law then follows from a straightforward recursion argument.
\end{itemize}
\end{proof}

\begin{remark}\label{rem:temporalnoiseIto}
If the noise in the SPDE~\eqref{eq:SPDE-mult-Ito} is a purely temporal Wiener process, i.e. $W(t,x,v)=\beta(t)$ for all $(t,x,v)\in\R^+\times\T^d\times\R^d$ where $\bigl(\beta(t)\bigr)_{t\ge 0}$ is a standard real-valued Wiener process, then the exact solution of~\eqref{eq:SPDE-mult-Ito} and the numerical solution given by~\eqref{eq:LT-mult-Ito} can be written
\[
f^\mI(t)=\e^{\beta(t)-\frac{t}{2}}f^\de(t)~,\quad t\ge 0~;\quad f_n^\mI=\e^{\beta(t_n)-\frac{t_n}{2}}f_n^\de~,\quad n\ge 0,
\]
where $\bigl(f^\de(t)\bigr)_{t\ge 0}$ is the exact solution of the deterministic equation~\eqref{eq:Vlasov}, given by~\eqref{eq:solutionVlasov}, and $\bigl(f_n^\de\bigr)_{n\ge 0}$ is given by the deterministic Lie--Trotter splitting scheme~\eqref{eq:LTdeter}.

In that situation, Propositions~\ref{propo:multIto} and~\ref{propo:LT-mult-Ito} are straightforward consequences of the results described for the deterministic problem in Section~\ref{sec:deterministic}.
\end{remark}

\begin{remark}
Applying the standard Euler--Maruyama scheme to treat the stochastic part of~\eqref{eq:SPDE-mult-Ito} provides the scheme
\[
f_{n+1}^{\rm{mIEM}}=S^2(\tau)S^1(\tau)f_n^{\rm{mIEM}}+\sum_{k=1}^{K}\delta\beta_{n,k}\sigma_{k}f_n^{\rm{mIEM}}.
\]
That scheme does not satisfy the positivity preserving property and the evolution law for the $L^2$ norm stated in Proposition~\ref{propo:LT-mult-Ito} for the proposed Lie--Trotter scheme~\eqref{eq:LT-mult-Ito}.
\end{remark}

\subsubsection{Numerical experiments}

We begin the numerical experiments by illustrating the behavior of the linear Vlasov equation perturbed by multiplicative It\^o noise~\eqref{eq:SPDE-mult-Ito}, in dimension $d=1$. In all the experiments below, the initial value $f_0$ is given by~\eqref{eq:f0} and the vector field $E$ is given by~\eqref{eq:E}. The discretization parameters are given by $\delta x=\frac{1}{400}$, $\delta v=\frac{4\pi}{800}$, and $\tau=0.1$. The snapshots of the numerical solution at times $\{0,0.5,1,1.5,2,2.5\}$ computed using the splitting scheme~\eqref{eq:LT-mult-Ito} are provided in Figures~\ref{fig:snap-it1} and~\ref{fig:snap-it2}, with $K=1$ and diffusion coefficient $\sigma_1$ given by~\eqref{eq:sigma1cos} and~\eqref{eq:sigma1sin} respectively. In both experiments, one observes that the solution remains nonnegative, which illustrates the positivity preserving property stated in Proposition~\ref{propo:LT-mult-Ito} on the considered realization.

\begin{figure}[h]
\begin{subfigure}{.3\textwidth}
  \centering
\includegraphics[width=1\linewidth]{VlasovInit-eps-converted-to.pdf}
\end{subfigure}%
\begin{subfigure}{.3\textwidth}
  \centering
\includegraphics[width=1\linewidth]{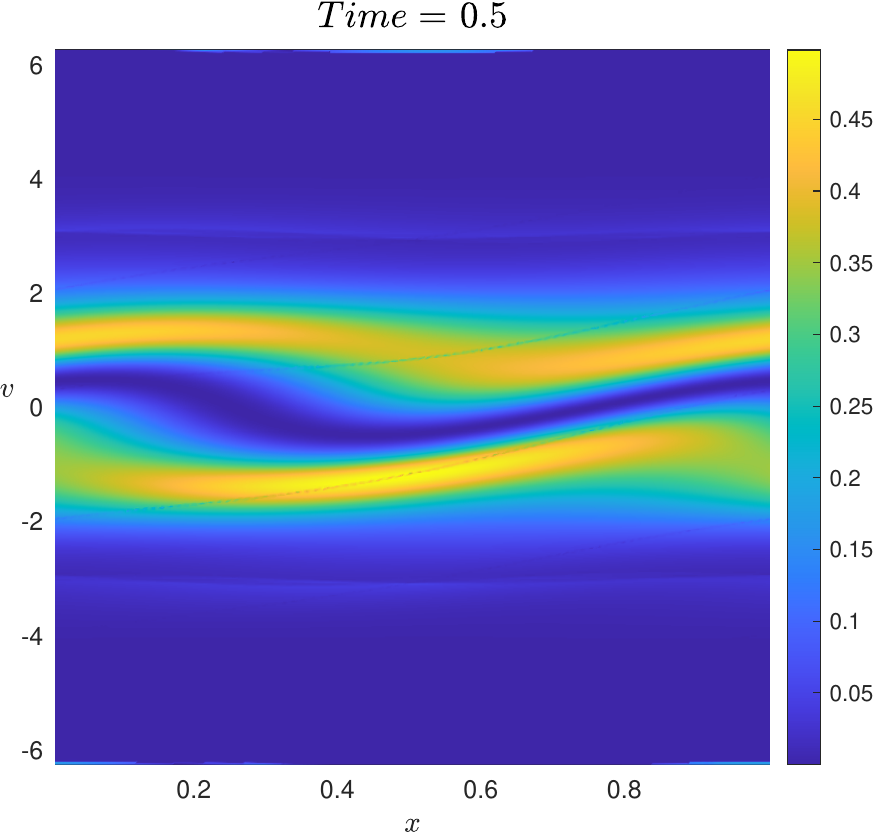}
\end{subfigure}%
\begin{subfigure}{.3\textwidth}
  \centering
  \includegraphics[width=1\linewidth]{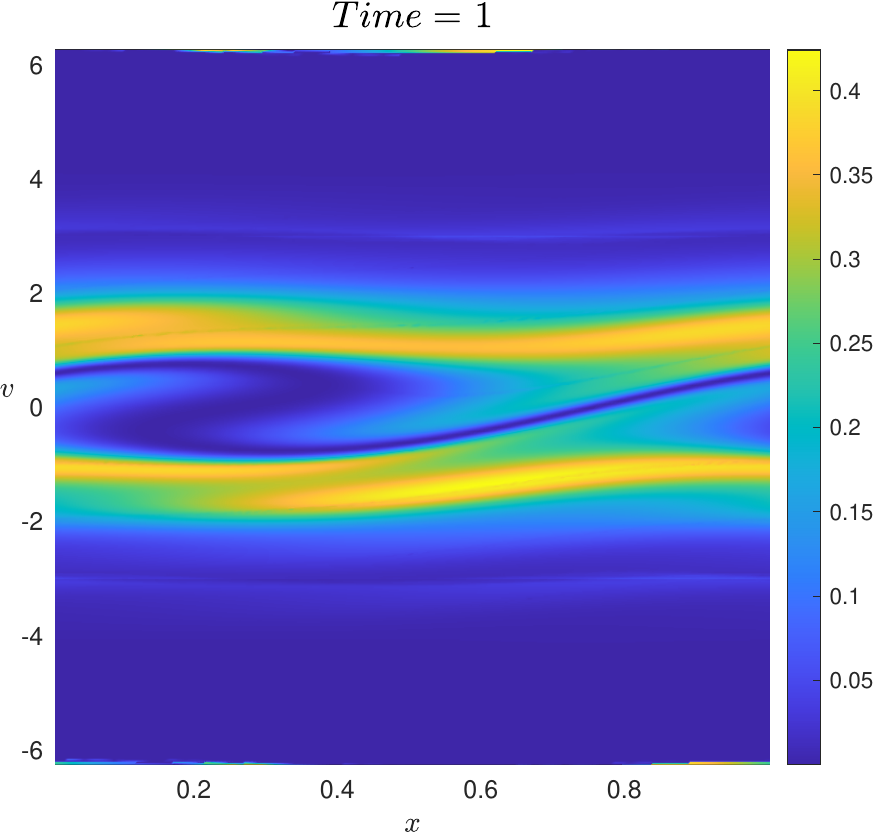}
\end{subfigure}\\[1ex]%
\begin{subfigure}{.3\textwidth}
  \centering
  \includegraphics[width=1\linewidth]{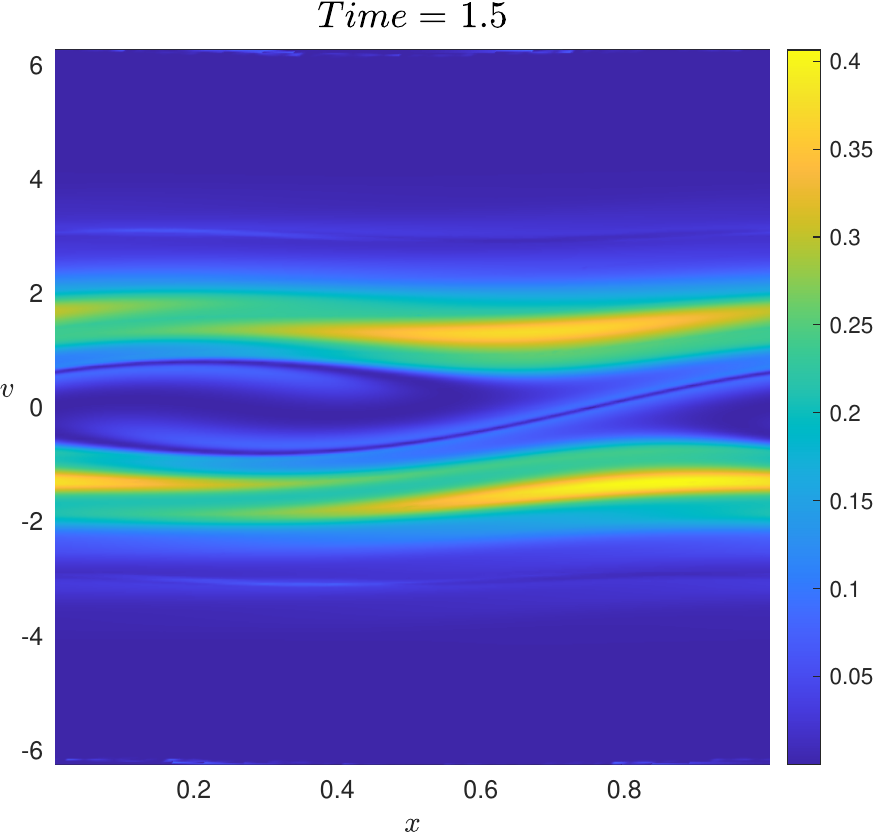}
\end{subfigure}
\begin{subfigure}{.3\textwidth}
  \centering
  \includegraphics[width=1\linewidth]{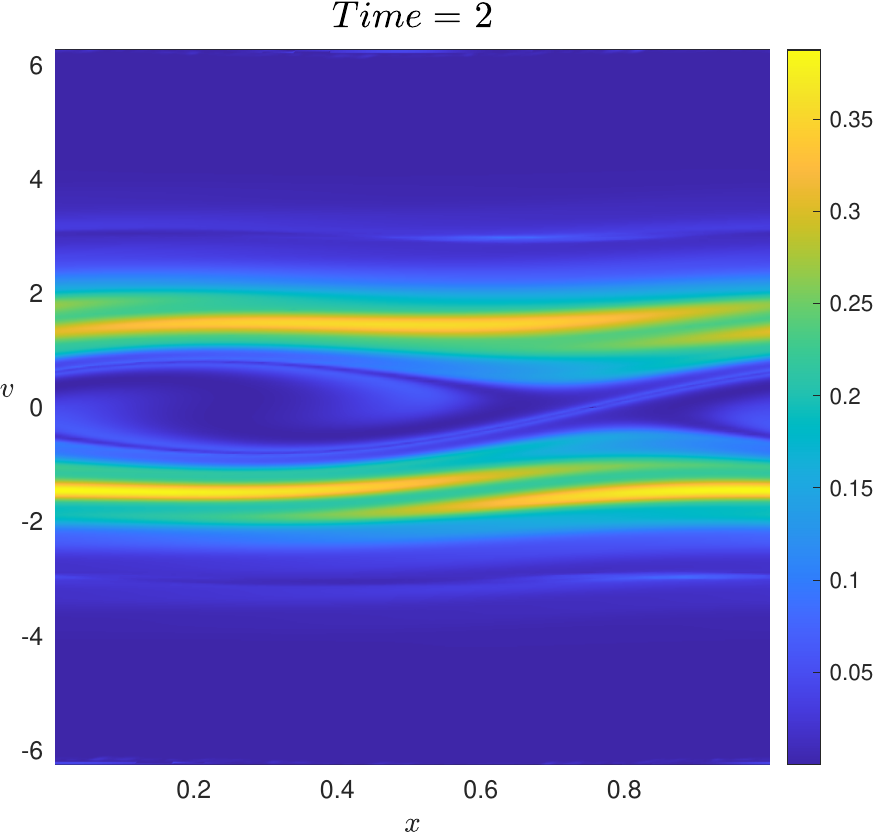}
\end{subfigure}
\begin{subfigure}{.3\textwidth}
  \centering
  \includegraphics[width=1\linewidth]{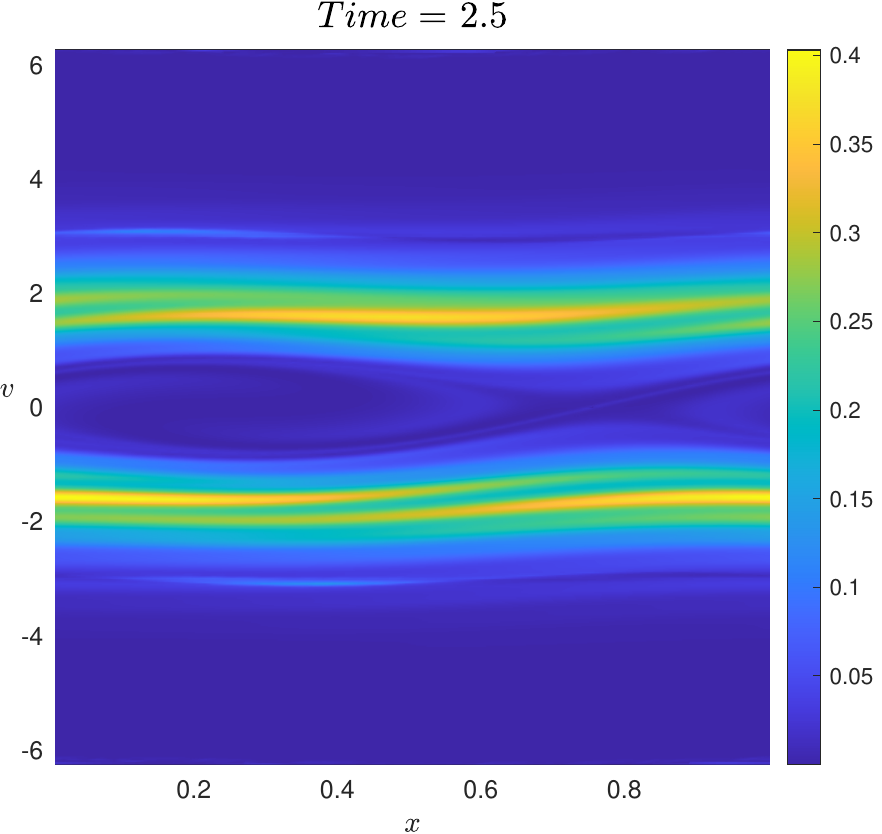}
\end{subfigure}
\caption{Snapshots: approximation of the solution of the stochastic PDE with multiplicative It\^o noise~\eqref{eq:SPDE-mult-Ito} with initial value $f_0$ given by~\eqref{eq:f0}, with $\sigma_1$ given by~\eqref{eq:sigma1cos} at times~$\{0,0.5,1,1.5,2,2.5\}$, using the Lie--Trotter splitting scheme~\eqref{eq:LT-mult-Ito} with time-step size $\tau=0.1$.}
\label{fig:snap-it1}
\end{figure}

\begin{figure}[h]
\begin{subfigure}{.3\textwidth}
  \centering
\includegraphics[width=1\linewidth]{VlasovInit-eps-converted-to.pdf}
\end{subfigure}%
\begin{subfigure}{.3\textwidth}
  \centering
  \includegraphics[width=1\linewidth]{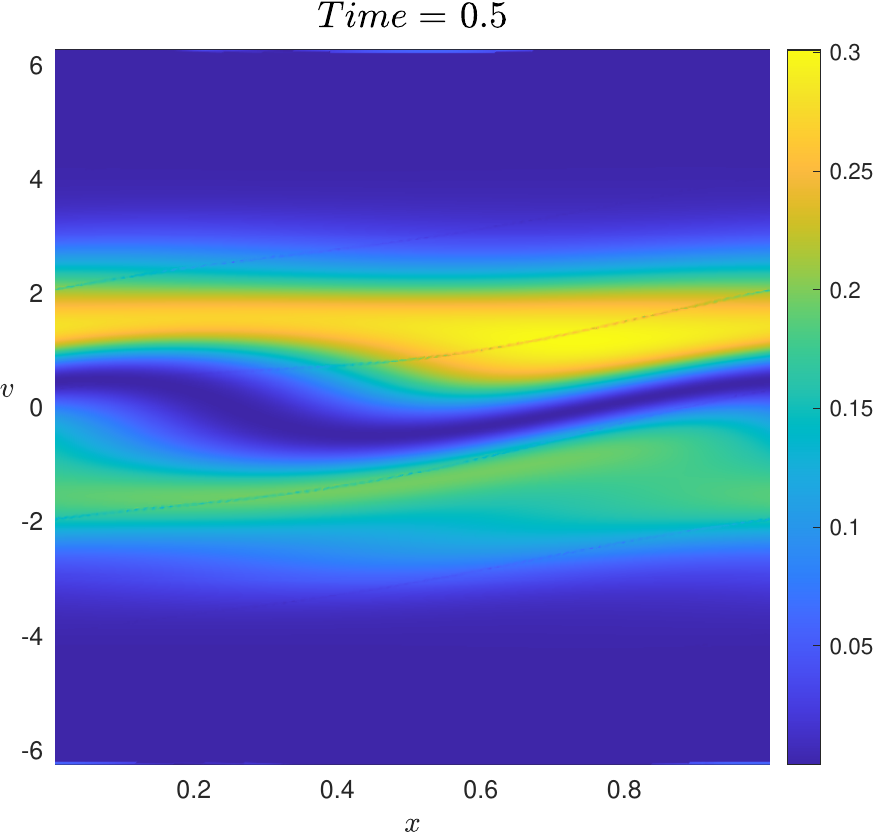}
\end{subfigure}%
\begin{subfigure}{.3\textwidth}
  \centering
  \includegraphics[width=1\linewidth]{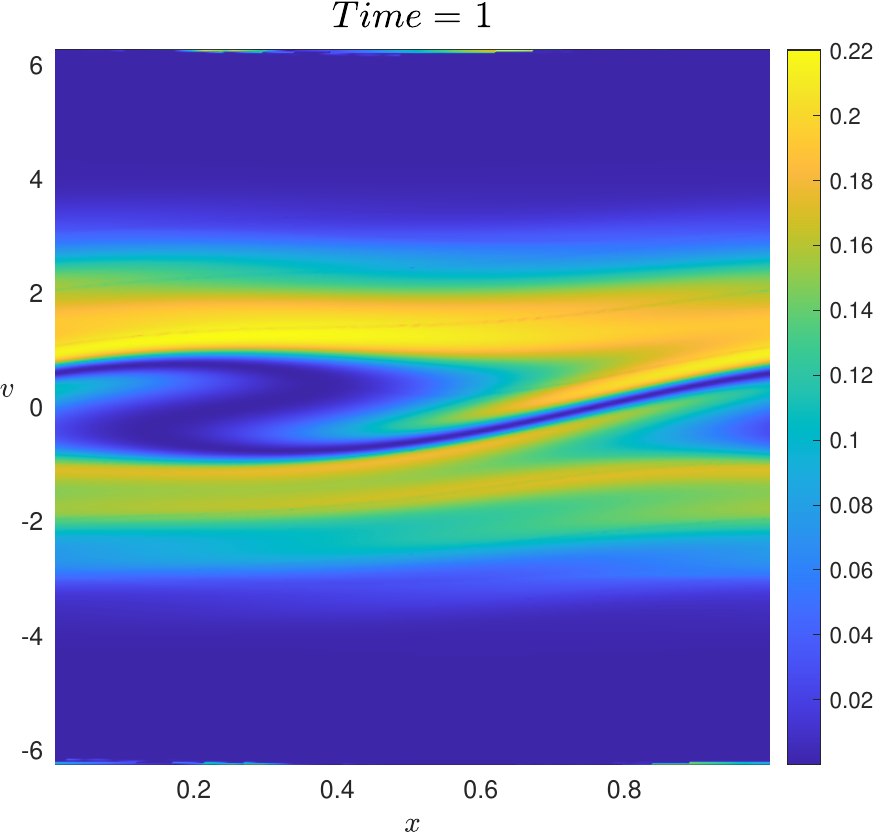}
\end{subfigure}\\[1ex]%
\begin{subfigure}{.3\textwidth}
  \centering
  \includegraphics[width=1\linewidth]{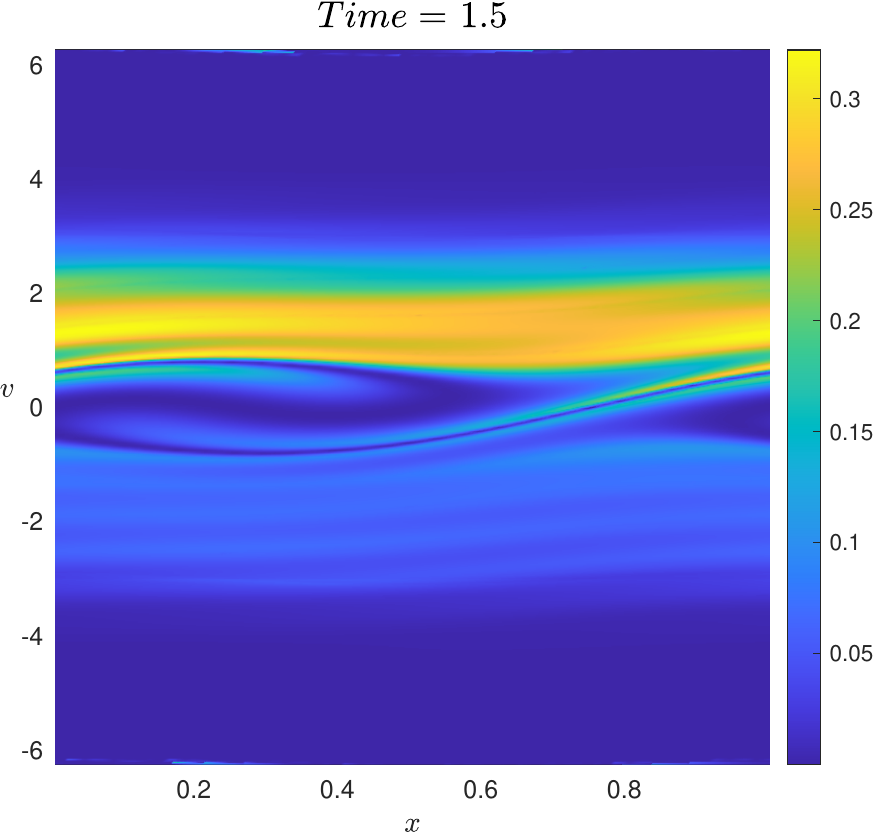}
\end{subfigure}
\begin{subfigure}{.3\textwidth}
  \centering
  \includegraphics[width=1\linewidth]{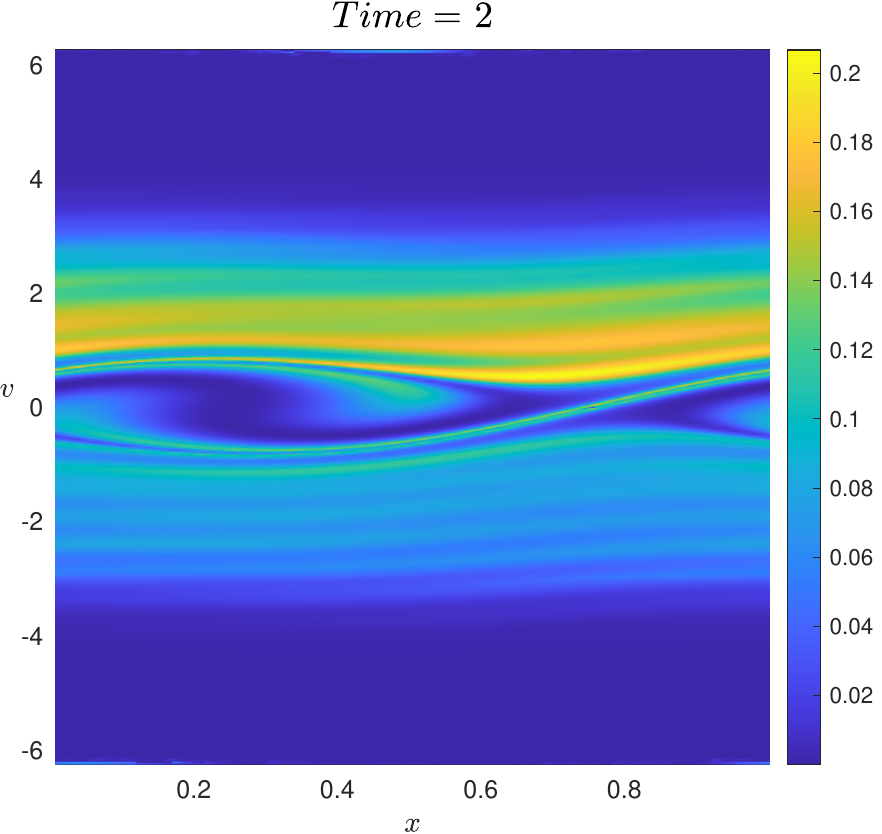}
\end{subfigure}
\begin{subfigure}{.3\textwidth}
  \centering
  \includegraphics[width=1\linewidth]{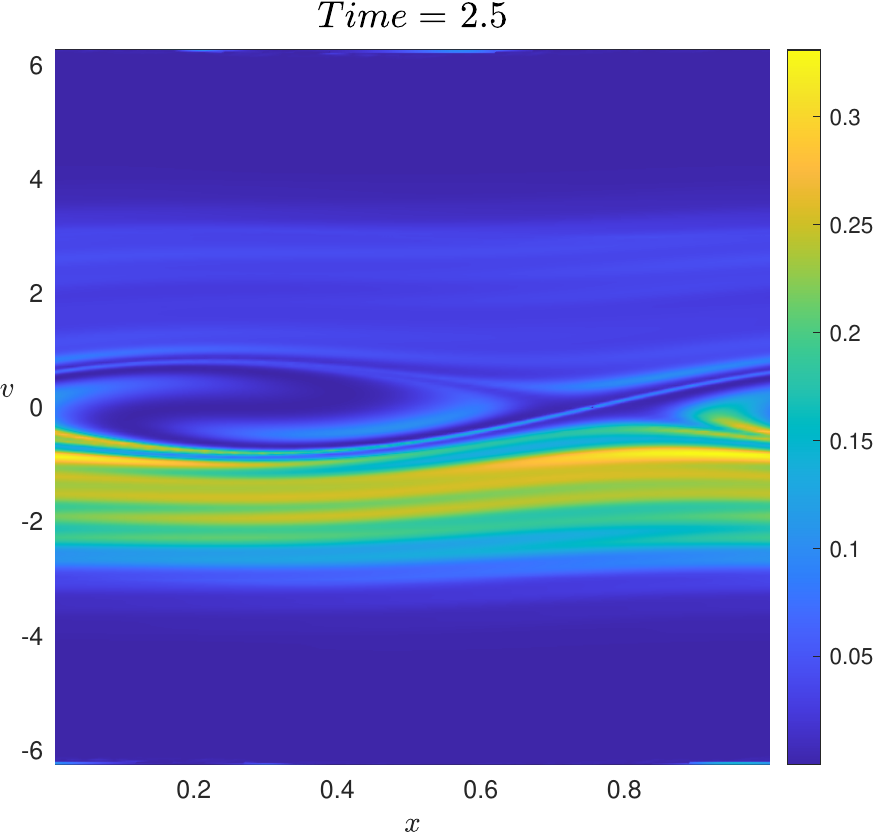}
\end{subfigure}
\caption{Snapshots: approximation of the solution of the stochastic PDE with multiplicative It\^o noise~\eqref{eq:SPDE-mult-Ito} with initial value $f_0$ given by~\eqref{eq:f0}, with $\sigma_1$ given by~\eqref{eq:sigma1sin} at times~$\{0,0.5,1,1.5,2,2.5\}$, using the Lie--Trotter splitting scheme~\eqref{eq:LT-mult-Ito} with time-step size $\tau=0.1$.}
\label{fig:snap-it2}
\end{figure}

Let us now check the almost sure positivity preserving property for the Lie--Trotter splitting scheme~\eqref{eq:LT-mult-Ito} in a more rigorous way: we have run $2.10^4$ independent samples on the time interval $[0,1]$, with initial value~\eqref{eq:f0}, with the same discretization parameters as above, and with different choices of the diffusion coefficients: either $K=1$ and $\sigma_1$ given by~\eqref{eq:sigma1sin}, or $K=2$ and $\sigma_1$ and $\sigma_2$ give by
\begin{equation}\label{eq:sigma12}
\sigma_1(x,v)=\cos(2\pi x)~,\quad \sigma_2(x,v)=\sin(2\pi x).
\end{equation}
All the samples only take nonnegative values, which confirms the positivity preserving property stated in Proposition~\ref{propo:LT-mult-Ito}.

Next, we illustrate the preservation of the expected mass and the evolution law of the $L^2$ norm stated in Proposition~\ref{propo:LT-mult-Ito}. In these experiments, one has $d=1$, $T=1$, $\delta x=\frac{1}{200}$,
$\delta v=\frac{4\pi}{400}$ and $\tau=0.1$. The expectations  are computed using an averaging procedure over $5.10^5$ samples. Since the solution is nonnegative, the mass is in fact equal to the $L^1$ norm of the solution. The results are presented in Figure~\ref{fig:normMultIto}, with different choices of the diffusion coefficients: $K=1$ with $\sigma_1(x,v)=1$, $K=1$ with $\sigma_1$ given by~\eqref{eq:sigma1sin}, and $K=2$ with $\sigma_1$ and $\sigma_2$ given by~\eqref{eq:sigma12}, respectively. Note that the condition~\eqref{eq:conditionsigma2constant} is satisfied in the first and in the third case. We observe a good agreement with the theoretical results given in Proposition~\ref{propo:LT-mult-Ito}.

\begin{figure}[h]
\begin{subfigure}{.45\textwidth}
  \centering
  \includegraphics[width=.9\linewidth]{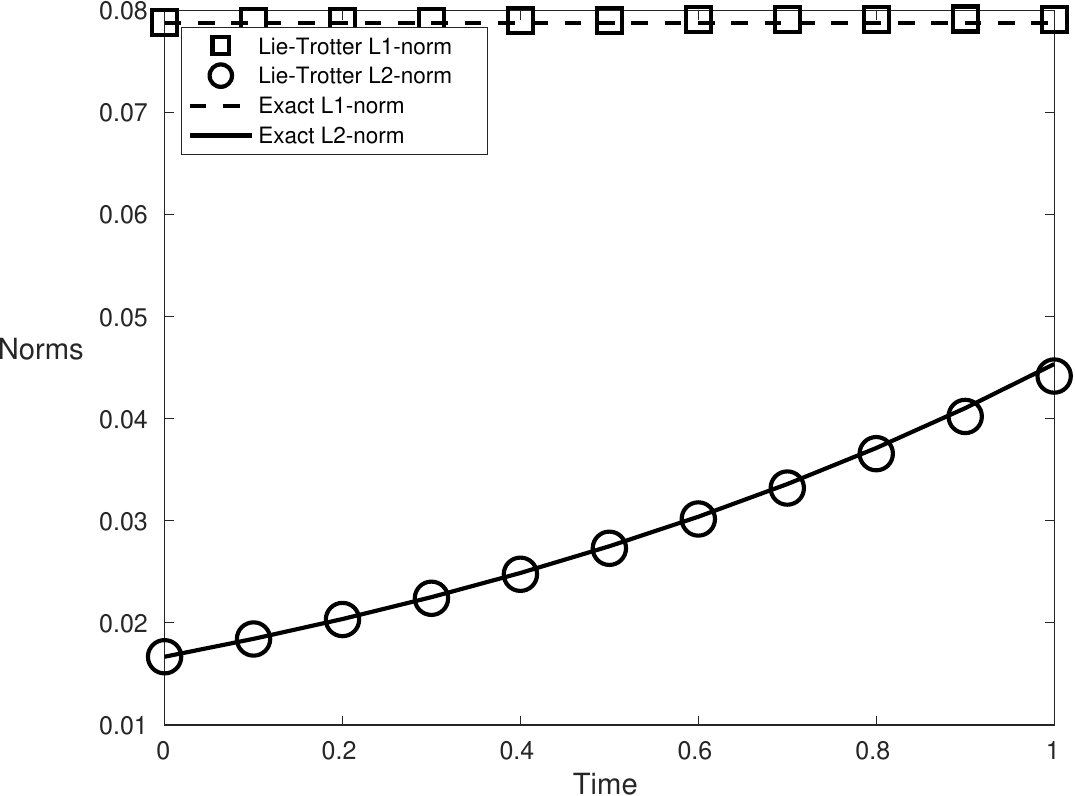}
  \caption{Noise given by $\sigma_1(x,v)=1$, $K=1$.}
\end{subfigure}%
\begin{subfigure}{.45\textwidth}
  \centering
  \includegraphics[width=.9\linewidth]{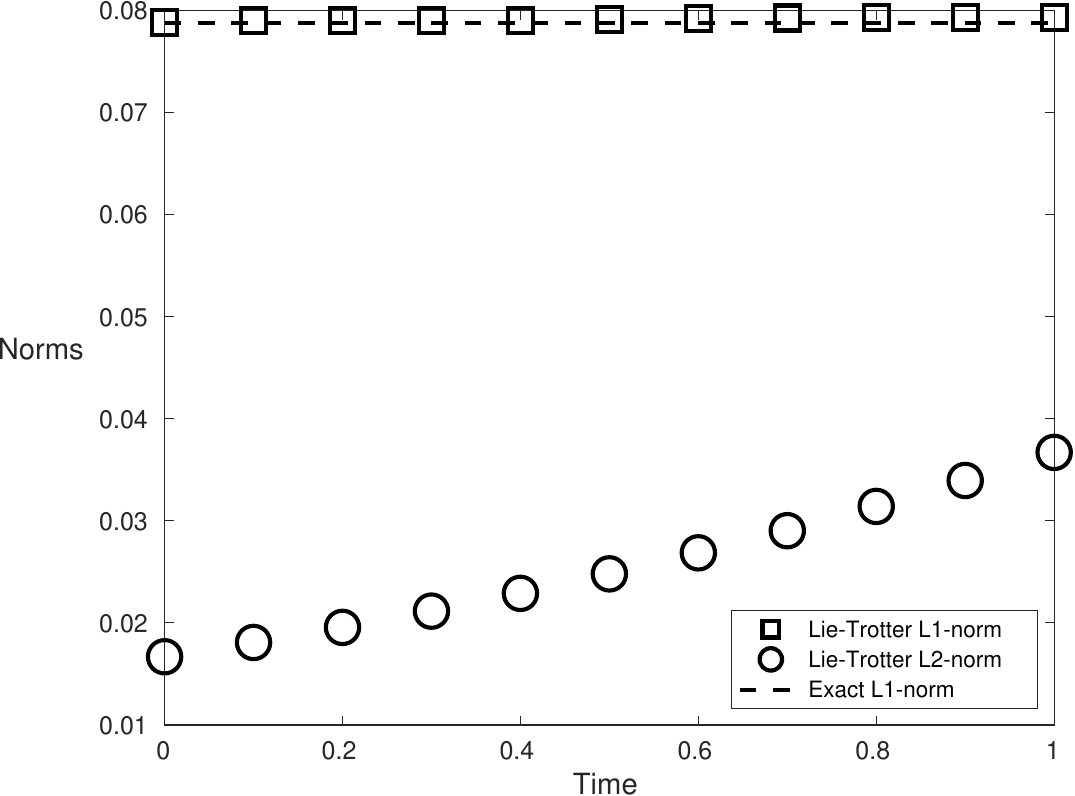}
  \caption{Noise given by~\eqref{eq:sigma1sin}, $K=1$.}
\end{subfigure}\\[1ex]%
\begin{subfigure}{.45\textwidth}
  \centering
  \includegraphics[width=.9\linewidth]{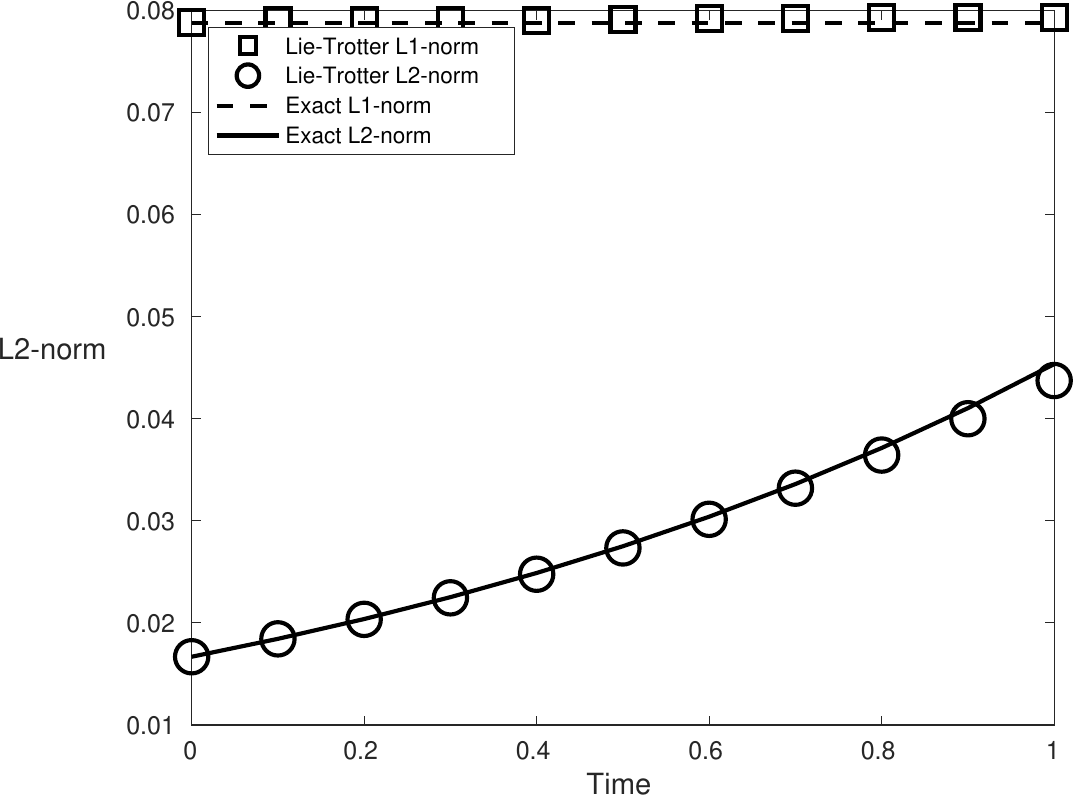}
  \caption{Noise given by~\eqref{eq:sigma12}, $K=2$.}
\end{subfigure}
\caption{Preservation of the expected mass and evolution law for the $L_{x,v}^2$ norm: illustration of Proposition~\ref{propo:LT-mult-Ito} when applying the Lie--Trotter scheme~\eqref{eq:LT-mult-Ito} with $\tau=0.1$ to the SPDE with multiplicative It\^o noise~\eqref{eq:SPDE-mult-Ito} with time-step size $\tau=0.1$.}
\label{fig:normMultIto}
\end{figure}

We conclude these numerical experiments in the multiplicative It\^o noise case by investigating the mean-square order of convergence of the Lie--Trotter splitting scheme~\eqref{eq:LT-mult-Ito}. The same procedure as in the additive noise case (Section~\ref{sec:additive}) is applied. A reference solution is computed using the splitting scheme with time-step size $\tau_{\rm ref}=2^{-14}$, and the errors are computed when the time-step size $\tau$ takes values in $\{2^{-7},\ldots,2^{-13}\}$. The expectation is computed using a Monte Carlo averaging procedure over $500$ independent samples. The discretization parameters are
$\delta x=\frac{1}{100}$, $\delta v=\frac{4\pi}{200}$. The final time is $T=0.5$. The noise is given by~\eqref{eq:sigma1sin} or by~\eqref{eq:sigma12}. The results are presented in a loglog plot in Figure~\ref{fig:ms-MultIto}. We observe a mean-square convergence order equal to $1$.

\begin{figure}[h]
\begin{subfigure}{.5\textwidth}
  \centering
  \includegraphics[width=.9\linewidth]{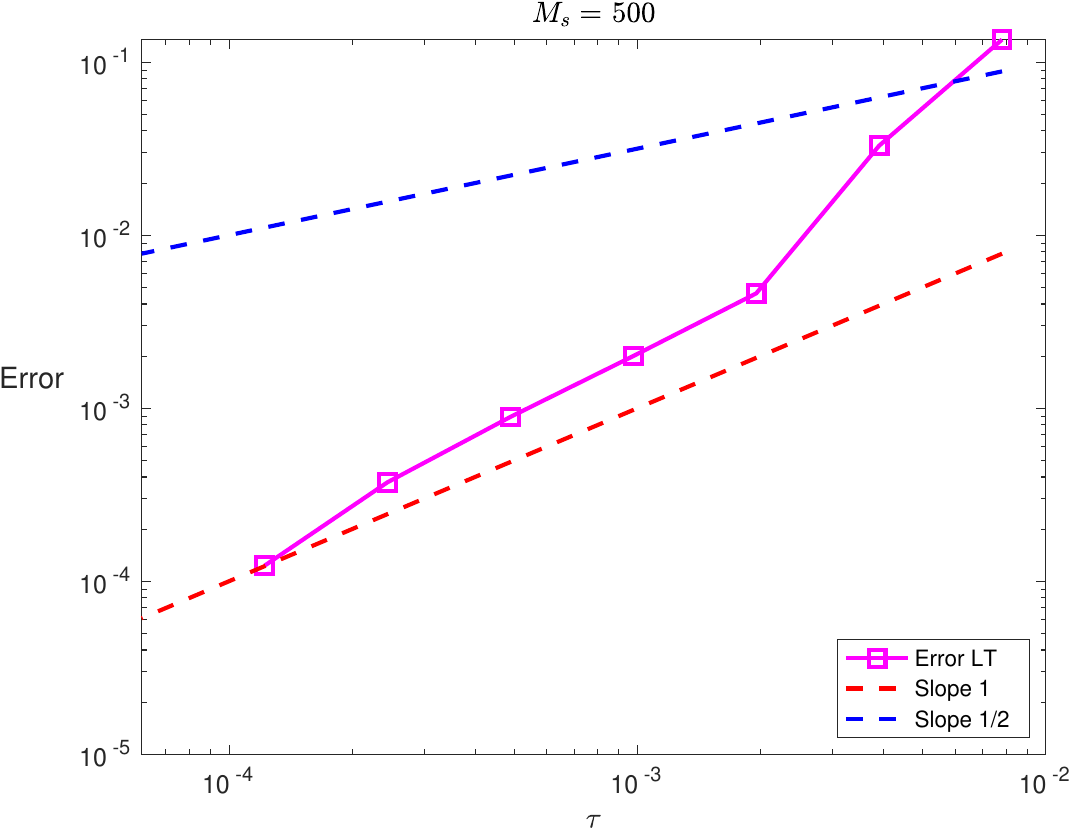}
  \caption{Noise given by~\eqref{eq:sigma1sin}.}
\end{subfigure}%
\begin{subfigure}{.5\textwidth}
  \centering
  \includegraphics[width=.9\linewidth]{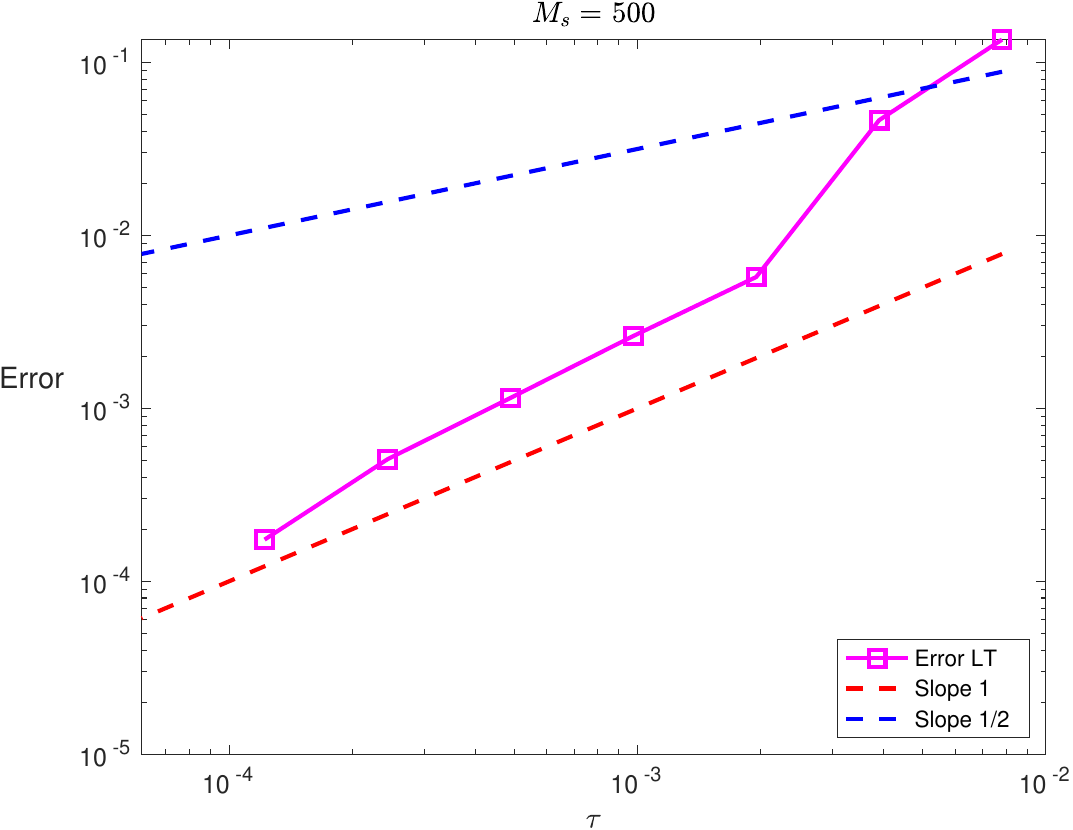}
  \caption{Noise given by~\eqref{eq:sigma12}.}
\end{subfigure}
\caption{Mean-square errors: Mean-square errors: Lie--Trotter scheme~\eqref{eq:LT-mult-Ito} applied to the SPDE with multiplicative It\^o noise~\eqref{eq:SPDE-mult-Ito} driven by one-dimensional noise ($K=1$, left) and by two-dimensional noise ($K=2$, right).}
\label{fig:ms-MultIto}
\end{figure}

\subsection{Stratonovich interpretation}\label{sec:multiplicativeStrato}

Let us now consider the linear Vlasov equation driven by a multiplicative noise interpreted in the Stratonovich sense, for $t\ge 0$, $x\in\T^d$ and $v\in \R^d$:
\begin{equation}\label{eq:SPDE-mult-Strato}
\left\lbrace
\begin{aligned}
&\text df^\mS(t,x,v)+v\cdot\nabla_xf^\mS(t,x,v)\,\text dt+E(x)\cdot\nabla_v f^\mS(t,x,v)\,\text dt=f^\mS(t,x,v)\circ \text dW(t,x,v)~,\\
&f^\mS(0,x,v)=f_0(x,v)~,
\end{aligned}
\right.
\end{equation}
where we recall that the noise is defined by~\eqref{eq:Wtxv} (Section~\ref{sec:notation}) and that the symbol $\circ$ denotes the Stratonovich product. The stochastic partial differential equation~\eqref{eq:SPDE-mult-Strato} has the equivalent It\^o formulation, for $t\ge 0, x\in\T^d, v\in \R^d$:
\begin{equation}\label{eq:SPDE-mult-Strato-Ito}
\left\lbrace
\begin{aligned}
&\text df^\mS(t,x,v)+v\cdot\nabla_xf^\mS(t,x,v)\,\text dt+E(x)\cdot\nabla_v f^\mS(t,x,v)\,\text dt=f^\mS(t,x,v)\,\text dW(t,x,v)\\
&\quad+\frac12\sum_{k=1}^{K}\sigma_k(x,v)^2 f^\mS(t,x,v)\,\text dt~,\\
&f^\mS(0,x,v)=f_0(x,v)~.
\end{aligned}
\right.
\end{equation}

\subsubsection{Analysis and properties of the problem}

Using the tools described in Section~\ref{sec:deterministic} and like in the multiplicative It\^o noise case studied in Section~\ref{sec:multiplicativeIto}, solutions of~\eqref{eq:SPDE-mult-Strato} can be written in different ways.

On the one hand, using the group $\bigl(S(t)\bigr)_{t\in\R}$ of linear operators given by~\eqref{eq:S} in Section~\ref{sec:deterministic}, one can consider mild solutions of~\eqref{eq:SPDE-mult-Strato-Ito}: for all $t\ge 0$, one has
\begin{equation}\label{eq:SPDE-mult-Strato-solution1}
f^\mS(t)=S(t)f_0+\sum_{k=1}^{K}\int_{0}^{t}S(t-s)\bigl(f^\mS(s)\sigma_k\bigr)\,\text d\beta_k(s)+\frac12\sum_{k=1}^{K}\int_{0}^{t}S(t-s)\bigl(f^\mS(s)\sigma_k^2\bigr) \,\text ds.
\end{equation}
On the other hand, using the expression~\eqref{eq:S} for the linear operator $S(t)$, one has for all $t\ge 0$, $x\in\T^d$ and $v\in\R^d$
\begin{equation}\label{eq:SPDE-mult-Strato-solution2}
\begin{aligned}
f^\mS(t,x,v)&=f_0(\phi_t^{-1}(x,v))+\sum_{k=1}^{K}\int_{0}^{t}f^\mS(s,\phi_{t-s}^{-1}(x,v))\sigma_k(\phi_{t-s}^{-1}(x,v))\,\text d\beta_k(s)\\
&\quad+\frac12\sum_{k=1}^{K}\int_{0}^{t}f^\mS(s,\phi_{t-s}^{-1}(x,v))\sigma_k^2(\phi_{t-s}^{-1}(x,v))\,\text ds.
\end{aligned}
\end{equation}
Finally, the connection with the ordinary differential equation~\eqref{eq:ODE} can also be seen by applying the It\^o--Wentzell formula (see \autoref{sec:ItoWentzell}): if $t\mapsto (x_t,v_t)=\phi_t(x_0,v_0)$ is the solution of the ordinary differential equation~\eqref{eq:ODE} with arbitrary initial value $(x_0,v_0)\in\T^d\times\R^d$, and if the solution of~\eqref{eq:SPDE-mult-Strato} is sufficiently regular, then the stochastic process $t\ge 0\mapsto f^\mS(t,x_t,v_t)$ satisfies
\begin{equation}\label{eq:SPDE-mult-Strato-solutionIW}
\begin{aligned}
\text df^\mS(t,x_t,v_t)&=\sum_{k=1}^{K}f^\mS(t,x_t,v_t)\sigma_k(x_t,v_t)\,\text d\beta_k(t)+\frac12\sum_{k=1}^{K}f^\mS(t,x_t,v_t)\sigma_k(x_t,v_t)^2\,\text dt\\
&=\sum_{k=1}^{K}f^\mS(t,x_t,v_t)\sigma_k(x_t,v_t)\circ \text d\beta_k(t).
\end{aligned}
\end{equation}
The formula~\eqref{eq:SPDE-mult-Strato-solutionIW} allows to retrieve the expression~\eqref{eq:SPDE-mult-Strato-solution2} of $f^\mI(t,x,v)$ above
by writing $(x_t,v_t)=\phi_t(x,v)$.

Let us now describe the properties of the solutions of the SPDE~\eqref{eq:SPDE-mult-Strato}.
\begin{proposition}\label{propo:multStrato}
Let $\left(f^\mS(t)\right)_{t\geq0}$ be the solution of the SPDE~\eqref{eq:SPDE-mult-Strato} with initial value
$f_0$. One has the following properties.
\begin{itemize}
\item \emph{Preservation of positivity.}
Assume that $f_0(x,v)\ge 0$ for all $(x,v)\in\T^d\times \R^d$. Then, one has
$f^\mS(t,x,v)\ge 0$ almost surely for all $t\geq0$ and $(x,v)\in\T^d\times\R^d$.
\item \emph{Evolution law for the $L^2$ norm.} Assume that $f_0\in L_{x,v}^2$ and that
the condition~\eqref{eq:conditionsigma2constant} is satisfied. Then one has
$f(t)\in L^2(\Omega,L_{x,v}^2)$ for all $t\geq0$, and
\[
\E[\|f^\mS(t)\|_{L_{x,v}^2}^2]=\e^{2\sigma^2 t}\|f_0\|_{L_{x,v}^2}^2.
\]
\end{itemize}
\end{proposition}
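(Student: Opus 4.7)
The plan is to mimic the strategy of Proposition~\ref{propo:multIto}, the main novelty being the systematic use of the Stratonovich calculus. For fixed $(x_0,v_0)\in\T^d\times\R^d$, set $Y_t:=f^\mS(t,\phi_t(x_0,v_0))$. The It\^o--Wentzell identity~\eqref{eq:SPDE-mult-Strato-solutionIW} shows that $(Y_t)_{t\ge 0}$ solves the scalar Stratonovich SDE
\[
\text dY_t=Y_t\sum_{k=1}^{K}a_k(t)\circ \text d\beta_k(t),\qquad Y_0=f_0(x_0,v_0),
\]
where $a_k(t):=\sigma_k(\phi_t(x_0,v_0))$ is a bounded \emph{deterministic} function of $t$ (the characteristic $t\mapsto\phi_t(x_0,v_0)$ is deterministic). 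Because the integrands are deterministic, each Stratonovich integral $\int_0^t a_k(s)\circ \text d\beta_k(s)$ coincides with the corresponding It\^o integral $\int_0^t a_k(s)\,\text d\beta_k(s)$, and the Stratonovich chain rule (or direct verification via It\^o's formula on the standard exponential ansatz) yields the closed-form expression
\[
Y_t=f_0(x_0,v_0)\,\exp\Bigl(\sum_{k=1}^{K}\int_0^t a_k(s)\,\text d\beta_k(s)\Bigr).
\]

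Preservation of positivity is then immediate: the exponential above is almost surely strictly positive, so $Y_t$ has the same sign as $f_0(x_0,v_0)$; setting $(x_0,v_0)=\phi_t^{-1}(x,v)$ gives $f^\mS(t,x,v)\ge 0$ almost surely whenever $f_0\ge 0$. For the evolution law, squaring the closed form and using the moment generating function of the centered Gaussian $\sum_{k}\int_0^t a_k(s)\,\text d\beta_k(s)$, whose variance equals $\sum_{k}\int_0^t a_k(s)^2\,\text ds$, one obtains
\[
\E[Y_t^2]=f_0(x_0,v_0)^2\,\exp\Bigl(2\int_0^t\sum_{k=1}^{K}\sigma_k(\phi_s(x_0,v_0))^2\,\text ds\Bigr).
\]
Under condition~\eqref{eq:conditionsigma2constant} the integrand is identically equal to $\sigma^2$, so $\E[Y_t^2]=\e^{2\sigma^2 t}f_0(x_0,v_0)^2$. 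Integrating in $(x_0,v_0)$, applying the change of variables $(x,v)=\phi_t(x_0,v_0)$ and invoking the volume preservation of $\phi_t$ recalled in Section~\ref{sec:deterministic} then yields $\E[\|f^\mS(t)\|_{L^2_{x,v}}^2]=\e^{2\sigma^2 t}\|f_0\|_{L^2_{x,v}}^2$, as claimed.

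The only delicate point to be addressed is the regularity required to legitimize the It\^o--Wentzell identity~\eqref{eq:SPDE-mult-Strato-solutionIW}; this is exactly the same regularity assumption already invoked in Section~\ref{sec:multiplicativeIto}, so no genuinely new difficulty is expected. An alternative, self-contained derivation of the evolution law would work directly from the It\^o reformulation~\eqref{eq:SPDE-mult-Strato-Ito} by applying an infinite-dimensional It\^o formula to $\|f^\mS(t)\|^2_{L^2_{x,v}}$: the transport contribution vanishes by integration by parts (the vector field $(v,E(x))$ is divergence-free), the Stratonovich correction contributes $\sigma^2\|f^\mS(t)\|^2_{L^2_{x,v}}$ to the drift, and the quadratic variation contributes another $\sigma^2\|f^\mS(t)\|^2_{L^2_{x,v}}$ under~\eqref{eq:conditionsigma2constant}, leading to $\frac{\text d}{\text dt}\E[\|f^\mS(t)\|^2_{L^2_{x,v}}]=2\sigma^2\E[\|f^\mS(t)\|^2_{L^2_{x,v}}]$ and hence, by Gronwall, to the announced identity.
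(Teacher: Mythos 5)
Your proof is correct, but for the evolution law it follows a genuinely different route from the paper. The paper proves the $L^2$ identity by working with the mild formulation~\eqref{eq:SPDE-mult-Strato-solution1} and applying the It\^o formula in the Hilbert space $L_{x,v}^2$ together with the isometry of $S(t)$, which yields the linear ODE $\tfrac12\tfrac{\text d}{\text dt}\E[\|f^\mS(t)\|_{L_{x,v}^2}^2]=\sum_k\E[\|\sigma_k f^\mS(t)\|_{L_{x,v}^2}^2]$ and then concludes under~\eqref{eq:conditionsigma2constant} --- this is exactly your ``alternative, self-contained derivation'' sketched at the end. Your main argument instead solves the scalar Stratonovich SDE along each deterministic characteristic in closed form, $Y_t=f_0(x_0,v_0)\exp\bigl(\sum_k\int_0^t a_k\,\text d\beta_k\bigr)$ (correctly using that the Stratonovich and It\^o integrals of a deterministic integrand coincide, and that the $\sigma_k$ are bounded in this section so all moments are finite), computes $\E[Y_t^2]$ via the Gaussian moment generating function, and integrates using volume preservation of $\phi_t$. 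This pointwise approach buys an explicit log-normal representation of the solution, from which positivity is immediate --- the paper instead invokes a comparison principle for SDEs at this step, so your treatment of positivity is the same idea made more explicit --- and it would also deliver all $L^p$ moments with no extra work; the paper's Hilbert-space route is shorter for the $L^2$ law specifically and avoids the pointwise regularity needed to apply the It\^o--Wentzell formula, a caveat you rightly flag. Both arguments rest on the same two structural facts (the characteristics of~\eqref{eq:ODE} and volume/isometry preservation), so the proofs are of comparable rigor.
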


\begin{proof}[Proof of Proposition~\ref{propo:multStrato}]
\leavevmode
\begin{itemize}
\item Owing to the expression~\eqref{eq:SPDE-mult-Strato-solutionIW} of the solution and applying a comparison principle for solutions of stochastic differential equations, one has $f^\mS(t,x_t,v_t)\ge 0$ almost surely for all $t\ge 0$, and for any arbitrary initial value $(x_0,v_0)\in\T^d\times\R^d$. Choosing $(x_0,v_0)=\phi_t^{-1}(x,v)$ then yields $f^\mS(t,x,v)\ge 0$ almost surely for all $(t,x,v)\in \R^+\times\T^d\times\R^d$.

\item Owing to the expression~\eqref{eq:SPDE-mult-Strato-solution1} of the mild solution, using It\^o's formula in the Hilbert space $L_{x,v}^2$ and the isometry property of the group $\bigl(S(t)\bigr)_{t\geq0}$ (see Section~\ref{sec:deterministic}), one obtains for all $t\ge 0$
\begin{equation*}
\frac12\frac{\text d\E[\|f^\mS(t)\|_{L_{x,v}^2}^2]}{\text dt}=\sum_{k=1}^{K}\E[\|\sigma_k f^\mS(t)\|_{L_{x,v}^2}^2].
\end{equation*}
Using the condition~\eqref{eq:conditionsigma2constant} and integrating then yields the identity
\[
\E[\|f^\mS(t)\|_{L_{x,v}^2}^2]=\e^{2\sigma^2 t}\|f_0\|_{L_{x,v}^2}^2.
\]
\end{itemize}
\end{proof}

\subsubsection{Splitting scheme}
Let us now describe the proposed numerical scheme for the temporal discretization of the SPDE~\eqref{eq:SPDE-mult-Strato} driven by multiplicative It\^o noise. Like in the multiplicative It\^o noise case presented in Section~\ref{sec:multiplicativeIto}, a Lie--Trotter splitting strategy is applied. The treatment of the deterministic part is not modified. Compared with Section~\ref{sec:multiplicativeIto}, the auxiliary stochastic subsystem needs to be considered with Stratonovich interpretation of the noise:
\[
\text df(t,x,v)=f(t,x,v)\circ \text dW(t,x,v)~,\quad (t,x,v)\in\R^+\times\T^d\times\R^d
\]
The auxiliary stochastic subsystem above is solved exactly: for all $t\ge s\ge 0$ one has
\[
f(t,x,v)=\e^{\sum_{k=1}^{K}\sigma_{k}(\beta_k(t)-\beta_k(s))}f(s,x,v)~,\quad \forall~(x,v)\in\T^d\times\R^d.
\]
Using the Lie--Trotter integrator~\eqref{eq:LTdeter} for the deterministic part and combining the discretizations of the deterministic and stochastic parts yields the following scheme: given the initial value $f_0$ and the time-step size $\tau\in(0,1)$, set $f_0^\mS=f_0$ and for any nonnegative integer $n\ge 0$ set
\begin{equation}\label{eq:LT-mult-Strato}
\left\lbrace
\begin{aligned}
&\hat f_{n+1}^\mS(x,v)=S^2(\tau)S^1(\tau)f_n^\mS(x,v)\\
&f_{n+1}^\mS(x,v)=\e^{\sum_{k=1}^{K}\sigma_k(x,v)\delta\beta_{n,k}}\hat f_n^\mS(x,v)~,\quad x\in\T^d, v\in\R^d,
\end{aligned}
\right.
\end{equation}
where we recall that the Wiener increments $\delta\beta_{n,k}$ are given by~\eqref{eq:WincBeta}, see Section~\ref{sec:notation}.

The Lie--Trotter splitting scheme~\eqref{eq:LT-mult-Strato} satisfies the same properties as the exact solution stated in Proposition~\ref{propo:multStrato}.

\begin{proposition}\label{propo:LT-mult-Strato}
Let $(f_n^\mS)_{n\ge 0}$ be the solution of the Lie--Trotter splitting scheme~\eqref{eq:LT-mult-Strato} with initial value $f_0$.
One then has the following properties.
\begin{itemize}
\item \emph{Preservation of positivity.} Assume that $f_0(x,v)\ge 0$ for all $(x,v)\in\T^d\times\R^d$. Then, for any time-step size $\tau\in(0,1)$, one has $f_n^\mS(x,v)\ge 0$ almost surely for any nonnegative integer $n\in\N$ and all $(x,v)\in\T^d\times\R^d$.
\item \emph{Evolution law for the $L^2$ norm}. Assume that $f_0\in L^2_{x,v}$ and that the condition~\eqref{eq:conditionsigma2constant} is satisfied. Then, one has $f_n^\mS\in L^2(\Omega,L^2_{x,v})$, for all $n\ge 0$, and
$$
\E[\|f_n^\mS\|_{L_{x,v}^2}^2]=\e^{2\sigma^2t_n}\|f_0\|_{L_{x,v}^2}^2,
$$
where $t_n=n\tau$.
\end{itemize}
\end{proposition}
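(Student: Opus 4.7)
The plan is to mirror the proof of Proposition~\ref{propo:LT-mult-Ito} almost verbatim, since the only structural change is the absence of the Itô correction $-\tfrac{\tau}{2}\sum_k\sigma_k^2$ in the exponential factor of the scheme~\eqref{eq:LT-mult-Strato}. Both items will be established by induction on $n$, the base case $n=0$ being trivial from $f_0^\mS=f_0$.

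For preservation of positivity, the induction step uses that if $f_n^\mS(x,v)\ge 0$ almost surely for all $(x,v)$, then $\hat{f}_{n+1}^\mS = S^2(\tau)S^1(\tau)f_n^\mS$ is also nonnegative pointwise, by the positivity-preserving property of the deterministic Lie--Trotter scheme recalled in Section~\ref{sec:deterministic}. Multiplying by the strictly positive random factor $\e^{\sum_k\sigma_k(x,v)\delta\beta_{n,k}}$ preserves nonnegativity, which gives $f_{n+1}^\mS(x,v)\ge 0$ almost surely.

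For the evolution law, the key point is the independence of $\hat{f}_{n+1}^\mS$ and the Gaussian increments $(\delta\beta_{n,k})_{1\le k\le K}$, since $\hat{f}_{n+1}^\mS$ is measurable with respect to $\mathcal{F}_{t_n}$. Squaring the second line of~\eqref{eq:LT-mult-Strato} and taking expectation pointwise in $(x,v)$, this independence gives
\[
\E[f_{n+1}^\mS(x,v)^2]=\E\bigl[\e^{2\sum_{k=1}^{K}\sigma_k(x,v)\delta\beta_{n,k}}\bigr]\,\E[\hat{f}_{n+1}^\mS(x,v)^2].
\]
The standard formula for exponential moments of Gaussians yields $\E[\e^{2\sum_k\sigma_k(x,v)\delta\beta_{n,k}}]=\e^{2\tau\sum_k\sigma_k(x,v)^2}$, which under the condition~\eqref{eq:conditionsigma2constant} reduces to the constant $\e^{2\sigma^2\tau}$. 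Integrating over $(x,v)\in\T^d\times\R^d$ and using that $S^1(\tau)$ and $S^2(\tau)$ are isometries on $L_{x,v}^2$ then gives
\[
\E[\|f_{n+1}^\mS\|_{L_{x,v}^2}^2]=\e^{2\sigma^2\tau}\E[\|\hat{f}_{n+1}^\mS\|_{L_{x,v}^2}^2]=\e^{2\sigma^2\tau}\E[\|f_n^\mS\|_{L_{x,v}^2}^2],
\]
and a straightforward recursion produces the claimed identity $\E[\|f_n^\mS\|_{L_{x,v}^2}^2]=\e^{2\sigma^2 t_n}\|f_0\|_{L_{x,v}^2}^2$.

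There is no serious obstacle here: the argument is a direct transcription of the Itô case, and the crucial discrepancy to track is that the Stratonovich exponential does not carry the $-\tfrac{\tau}{2}\sum_k\sigma_k(x,v)^2$ drift correction, so the Gaussian exponential moment is not cancelled and produces the doubled rate $2\sigma^2$ in the evolution law, consistent with Proposition~\ref{propo:multStrato}.
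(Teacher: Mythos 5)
Your proposal is correct and follows essentially the same route as the paper's own proof: induction on $n$, positivity from the deterministic Lie--Trotter step followed by multiplication by the strictly positive exponential factor, and the evolution law via independence of $\hat{f}_{n+1}^\mS$ from the increments, the Gaussian exponential-moment formula giving $\e^{2\tau\sum_k\sigma_k^2}=\e^{2\sigma^2\tau}$ under~\eqref{eq:conditionsigma2constant}, and the $L^2_{x,v}$-isometry of $S^1(\tau)$ and $S^2(\tau)$. Your closing remark correctly identifies the only substantive difference from the It\^o case, namely the absence of the drift correction in the exponential, which produces the rate $2\sigma^2$ instead of $\sigma^2$.
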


The proof is similar to the proof of Proposition~\ref{propo:multIto} from Section~\ref{sec:multiplicativeIto}.

\begin{proof}[Proof of Proposition~\ref{propo:LT-mult-Strato}]
\leavevmode
\leavevmode
\begin{itemize}
\item The proof of the positivity preserving property is performed by recursion. It is satisfied if $n=0$ since $f_0^\mS=f_0$. Assume that $f_n^{\mS}(x,v)\ge 0$ almost surely for all $(x,v)\in\T^d\times\R^d$. Owing to the positivity preserving property for the deterministic problem (see Section~\ref{sec:deterministic}), one has for all $(x,v)\in\T^d\times\R^d$
\begin{align*}
\hat{f}_{n+1}^\mS(x,v)&=\bigl(S^2(\tau)S^1(\tau)f_n^\mS\bigr)(x,v)=f_n^\mS(x-tv-t^2E(x),v-tE(x))\ge 0\\
f_{n+1}^\mS(x,v)&=\e^{\sum_{k=1}^{K}\sigma_{k}(x,v)\delta\beta_{n,k}}\hat{f}_{n+1}^\mS(x,v)\ge 0.
\end{align*}
Therefore one has $f_{n+1}^\mS(x,v)\ge 0$ almost surely for all $(x,v)\in\T^d\times\R^d$.

\item Observe that the random variables $\hat{f}_{n+1}^\mS(x,v)$ and $\bigl(\delta\beta_{n,k}\bigr)_{1\le k\le K}$ are independent.
As a result, using the well-known expression for the exponential moments of Gaussian random variables, one has
\begin{align*}
\E[\|f_{n+1}^\mS\|_{L_{x,v}^2}^2]&=\iint \E[\e^{2\sum_{k=1}^{K}\sigma_k(x,v)\delta\beta_{n,k}}]\E[\hat{f}_{n+1}^\mS(x,v)^2] \,\text dx \,\text dv\\
&=\iint \e^{2\tau\sum_{k=1}^{K}\sigma_k(x,v)^2}\E[\hat{f}_{n+1}^\mS(x,v)^2] \,\text dx\, \text dv.
\end{align*}
Then using the condition~\eqref{eq:conditionsigma2constant} and the expression $\hat{f}_{n+1}^\mS=S^2(\tau)S^1(\tau)f_n^\mS$ gives
\[
\E[\|f_{n+1}^\mS\|_{L_{x,v}^2}^2]=\e^{2\sigma^2\tau}\E[\|\bigl(S^2(\tau)S^1(\tau)\bigr)f_n^\mS\|_{L_{x,v}^2}^2]=\e^{\sigma^2\tau}\E[\|f_n^\mS\|_{L_{x,v}^2}^2],
\]
using the fact that $S^1(\tau)$ and $S^2(\tau)$ are isometries from $L_{x,v}^2$ to $L_{x,v}^2$. The evolution law then follows by a straightforward recursion argument.
\end{itemize}
\end{proof}

Remark~\ref{rem:temporalnoiseStrato} below is a discussion in the Stratonovich noise case of the situation described in Remark~\ref{rem:temporalnoiseIto} in the It\^o noise case above.
\begin{remark}\label{rem:temporalnoiseStrato}
If the noise in the SPDE~\eqref{eq:SPDE-mult-Strato} is a purely temporal Wiener process, i.e. $W(t,x,v)=\beta(t)$ for all $(t,x,v)\in\R^+\times\T^d\times\R^d$ where $\bigl(\beta(t)\bigr)_{t\ge 0}$ is a standard real-valued Wiener process, then the exact solution of~\eqref{eq:SPDE-mult-Strato} and the numerical solution given by~\eqref{eq:LT-mult-Strato} can be written
\[
f^\mS(t)=\e^{\beta(t)}f^\de(t)~,\quad t\ge 0~;\quad f_n^\mS=\e^{\beta(t_n)}f_n^\de~,\quad n\ge 0,
\]
where $\bigl(f^\de(t)\bigr)_{t\ge 0}$ is the exact solution of the deterministic equation~\eqref{eq:Vlasov}, given by~\eqref{eq:solutionVlasov}, and $\bigl(f_n^\de\bigr)_{n\ge 0}$ is given by the deterministic Lie--Trotter splitting scheme~\eqref{eq:LTdeter}.

In that situation, Propositions~\ref{propo:multStrato} and~\ref{propo:LT-mult-Strato} are straightforward consequences of the results described for the deterministic problem in Section~\ref{sec:deterministic}.
\end{remark}

\subsubsection{Numerical experiments}

We begin the numerical experiments by illustrating the behavior of the linear Vlasov equation perturbed by multiplicative Stratonovich noise~\eqref{eq:SPDE-mult-Strato}, in dimension $d=1$. In all the experiments below, the initial value $f_0$ is given by~\eqref{eq:f0} and the vector field $E$ is given by~\eqref{eq:E}. The discretization parameters are given by $\delta x=\frac{1}{400}$, $\delta v=\frac{4\pi}{800}$, and $\tau=0.1$. The snapshots of the numerical solution at times $\{0,0.5,1,1.5,2,2.5\}$ computed using the splitting scheme~\eqref{eq:LT-mult-Strato} are provided in Figures~\ref{fig:snap-strato1} and~\ref{fig:snap-strato2}, with $K=1$ and diffusion coefficient $\sigma_1$ given by~\eqref{eq:sigma1cos} and~\eqref{eq:sigma1sin} respectively. In both experiments, one observes that the solution remains nonnegative, which illustrates the positivity preserving property stated in Proposition~\ref{propo:LT-mult-Strato} on the considered realization.

\begin{figure}[h]
\begin{subfigure}{.3\textwidth}
  \centering
\includegraphics[width=1\linewidth]{VlasovInit-eps-converted-to.pdf}
\end{subfigure}%
\begin{subfigure}{.3\textwidth}
  \centering
\includegraphics[width=1\linewidth]{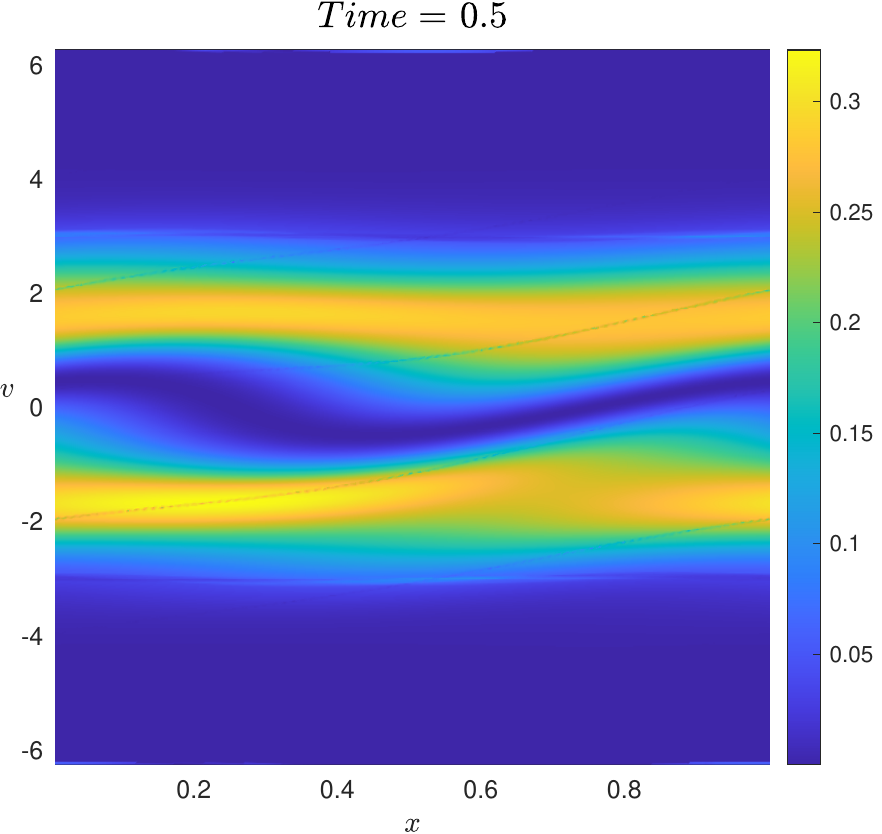}
\end{subfigure}%
\begin{subfigure}{.3\textwidth}
  \centering
  \includegraphics[width=1\linewidth]{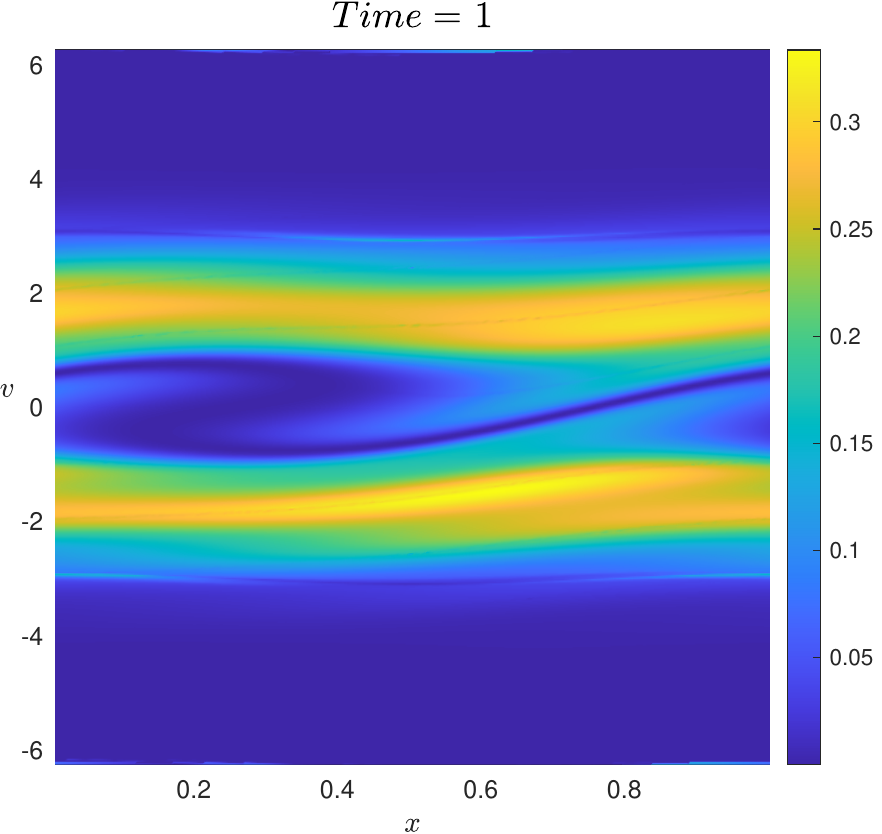}
\end{subfigure}\\[1ex]%
\begin{subfigure}{.3\textwidth}
  \centering
  \includegraphics[width=1\linewidth]{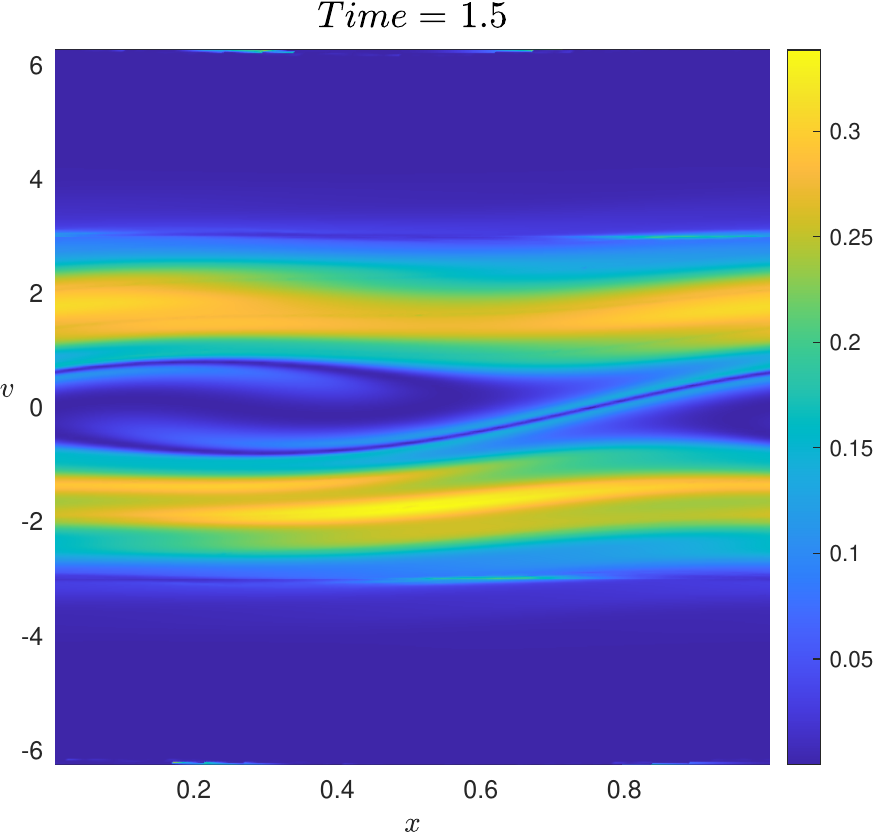}
\end{subfigure}
\begin{subfigure}{.3\textwidth}
  \centering
  \includegraphics[width=1\linewidth]{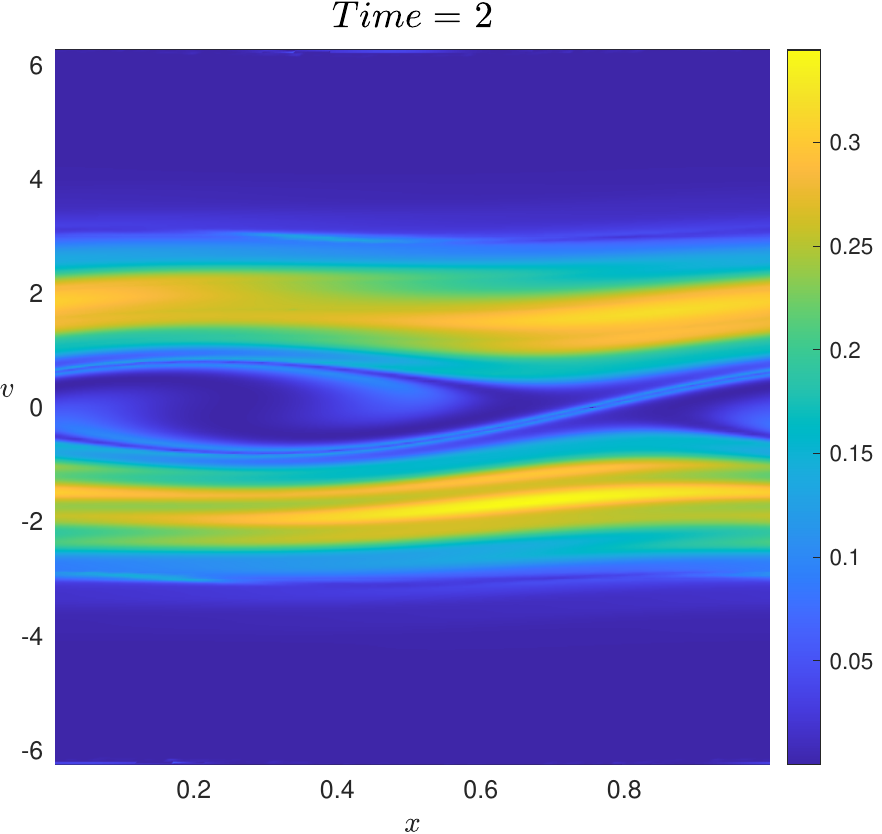}
\end{subfigure}
\begin{subfigure}{.3\textwidth}
  \centering
  \includegraphics[width=1\linewidth]{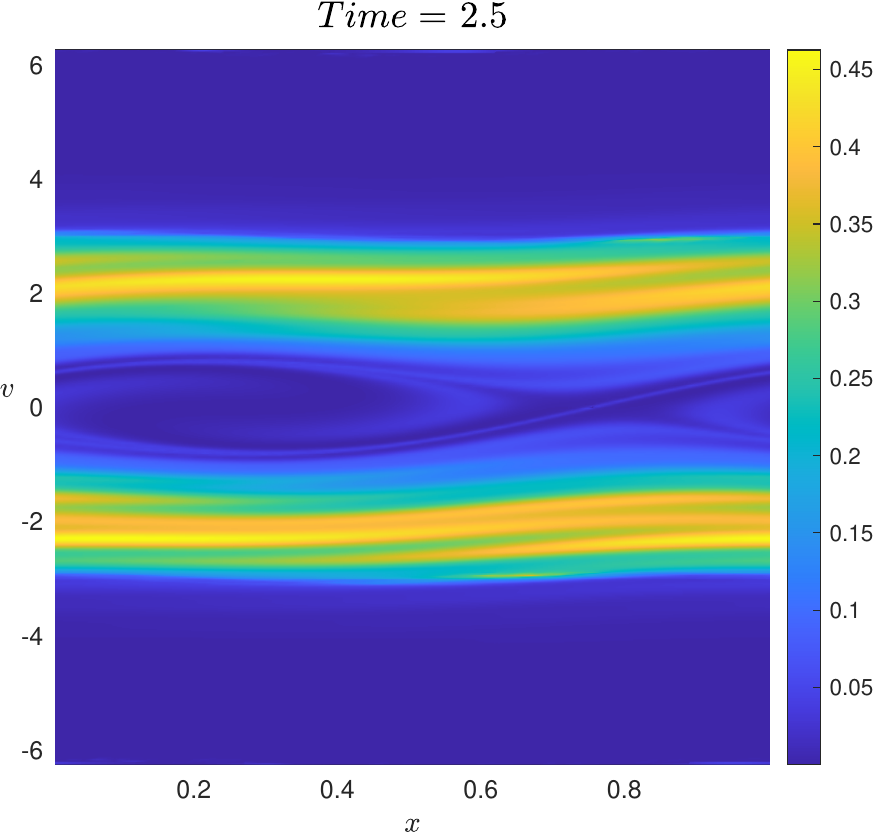}
\end{subfigure}
\caption{Snapshots: approximation of the solution of the stochastic PDE with multiplicative Stratonovich noise~\eqref{eq:SPDE-mult-Strato} with initial value $f_0$ given by~\eqref{eq:f0}, with $\sigma_1$ given by~\eqref{eq:sigma1cos} at times~$\{0,0.5,1,1.5,2,2.5\}$, using the Lie--Trotter splitting scheme~\eqref{eq:LT-mult-Strato} with time-step size $\tau=0.1$.}
\label{fig:snap-strato1}
\end{figure}

\begin{figure}[h]
\begin{subfigure}{.3\textwidth}
  \centering
\includegraphics[width=1\linewidth]{VlasovInit-eps-converted-to.pdf}
\end{subfigure}%
\begin{subfigure}{.3\textwidth}
  \centering
  \includegraphics[width=1\linewidth]{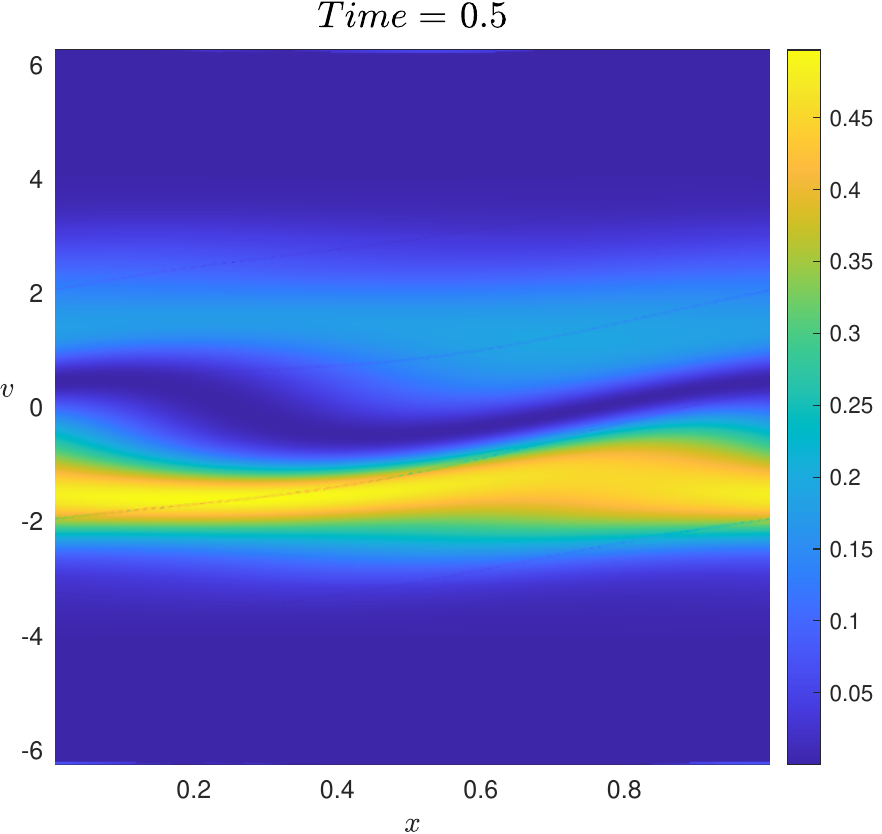}
\end{subfigure}%
\begin{subfigure}{.3\textwidth}
  \centering
  \includegraphics[width=1\linewidth]{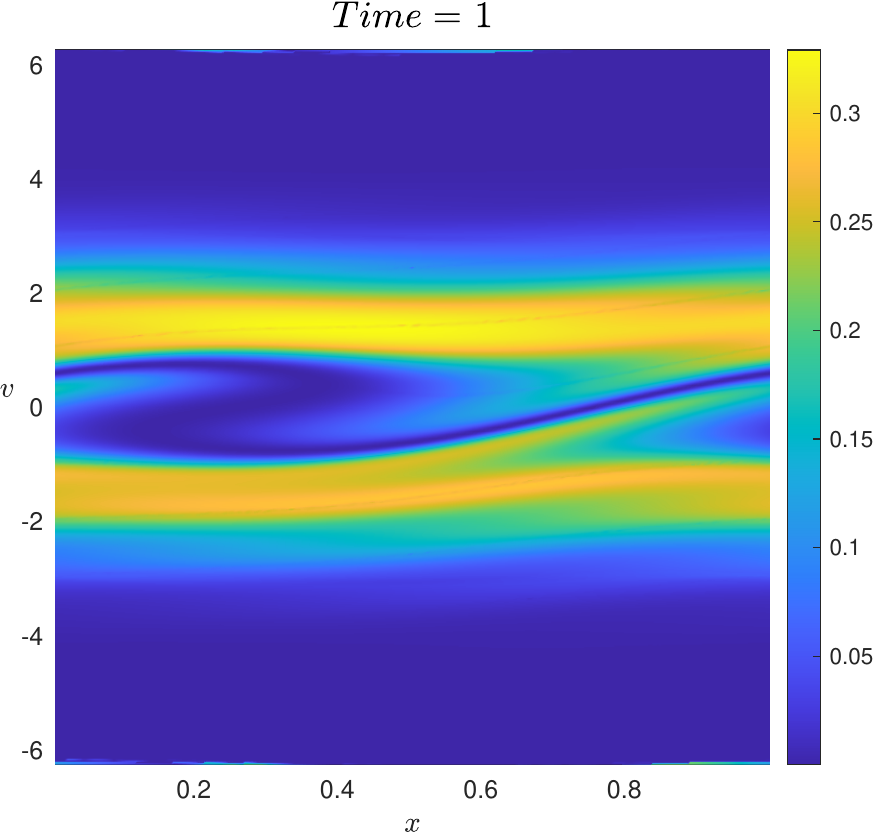}
\end{subfigure}\\[1ex]%
\begin{subfigure}{.3\textwidth}
  \centering
  \includegraphics[width=1\linewidth]{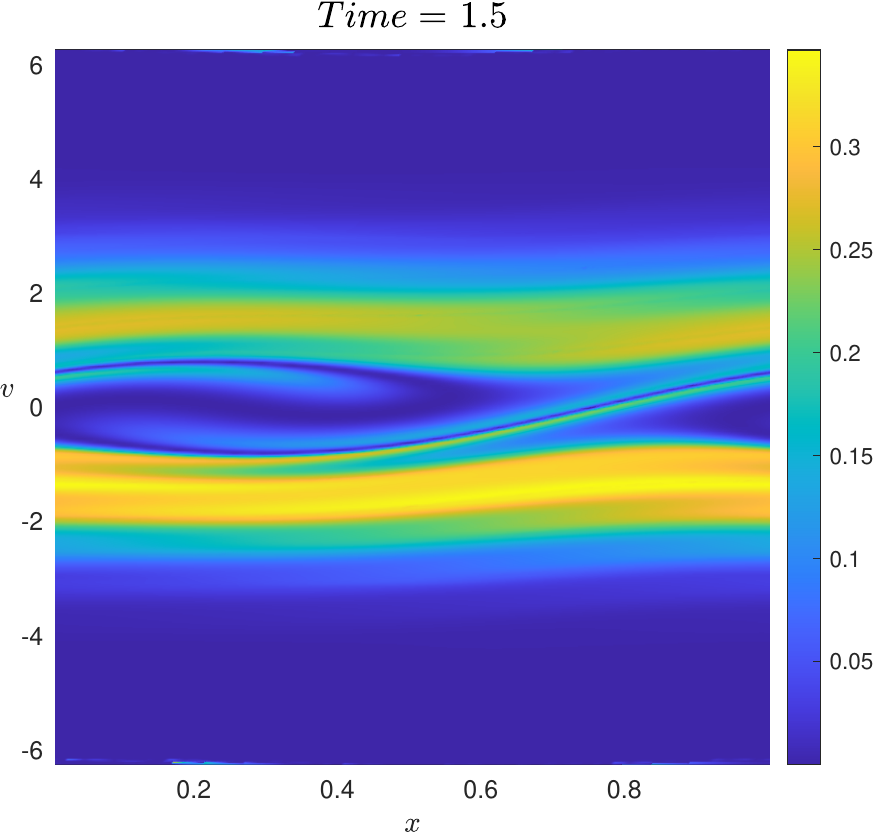}
\end{subfigure}
\begin{subfigure}{.3\textwidth}
  \centering
  \includegraphics[width=1\linewidth]{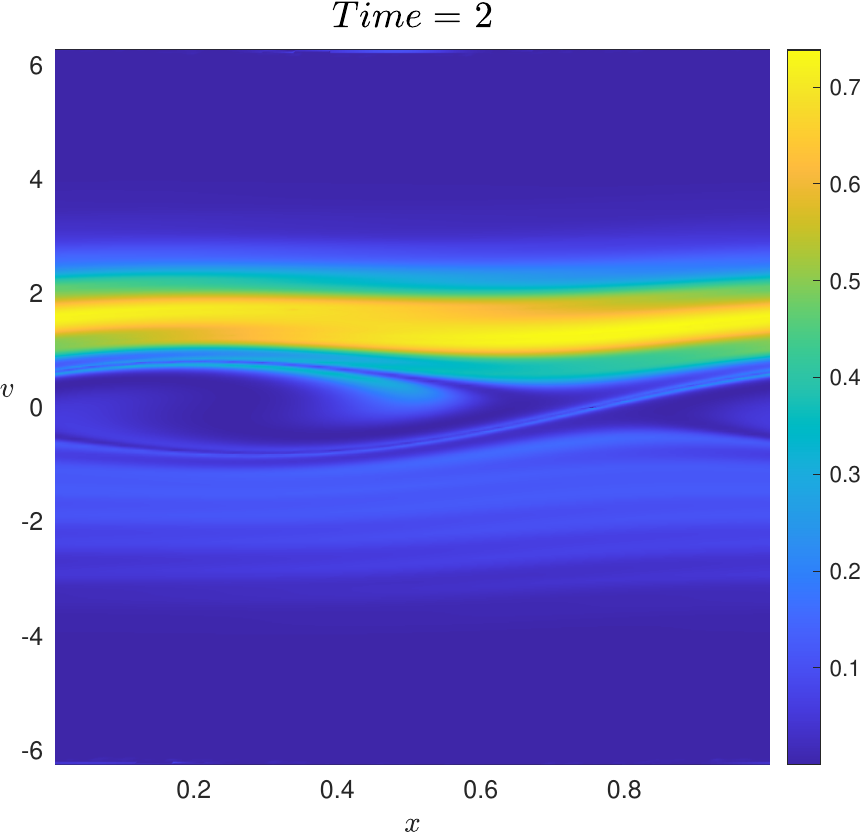}
\end{subfigure}
\begin{subfigure}{.3\textwidth}
  \centering
  \includegraphics[width=1\linewidth]{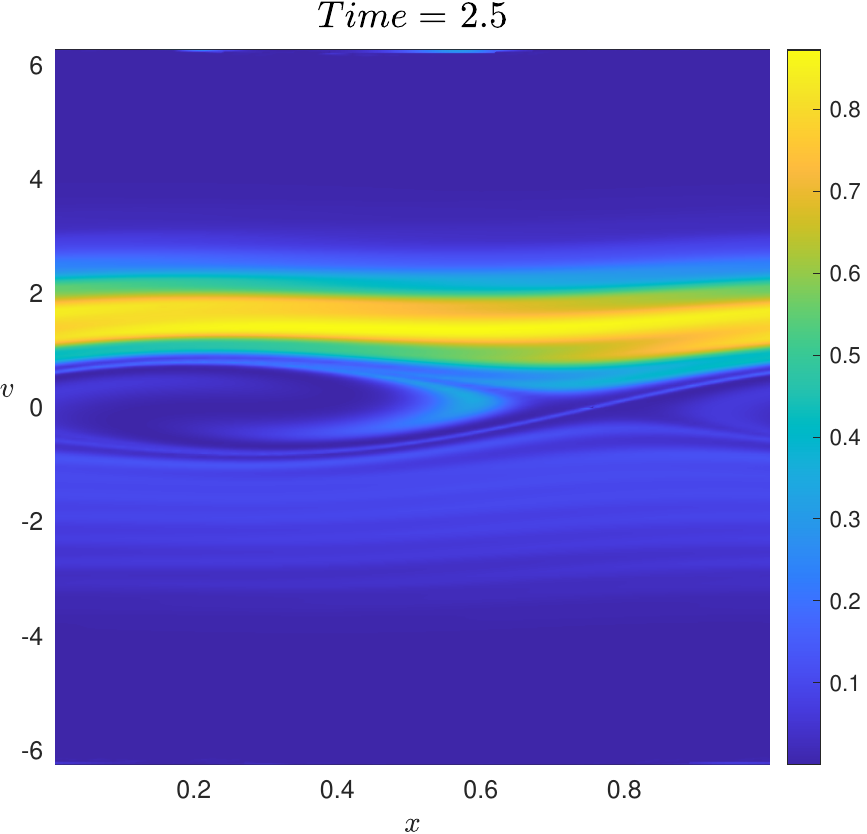}
\end{subfigure}
\caption{Snapshots: approximation of the solution of the stochastic PDE with multiplicative Stratonovich noise~\eqref{eq:SPDE-mult-Strato} with initial value $f_0$ given by~\eqref{eq:f0}, with $\sigma_1$ given by~\eqref{eq:sigma1sin} at times~$\{0,0.5,1,1.5,2,2.5\}$, using the Lie--Trotter splitting scheme~\eqref{eq:LT-mult-Strato} with time-step size $\tau=0.1$.}
\label{fig:snap-strato2}
\end{figure}

Next, we illustrate the evolution law of the $L^2$ norm stated in Proposition~\ref{propo:LT-mult-Strato}. In these experiments, one has $d=1$, $T=1$, $\delta x=\frac{1}{200}$, $\delta v=\frac{4\pi}{400}$ and $\tau=0.1$. The expectations are computed using an averaging procedure over $5.10^5$ samples. The results are presented in Figure~\ref{fig:normMultiStrato}, with different choices of the diffusion coefficients: $K=1$ with $\sigma_1(x,v)=1$, $K=1$ with $\sigma_1$ given by~\eqref{eq:sigma1sin}, and $K=2$ with $\sigma_1$ and $\sigma_2$ given by~\eqref{eq:sigma12}, respectively. Note that the condition~\eqref{eq:conditionsigma2constant} is satisfied in the first and in the third case. We observe a good agreement with the theoretical results given in Proposition~\ref{propo:LT-mult-Strato}.

\begin{figure}[h]
\begin{subfigure}{.5\textwidth}
  \centering
  \includegraphics[width=.9\linewidth]{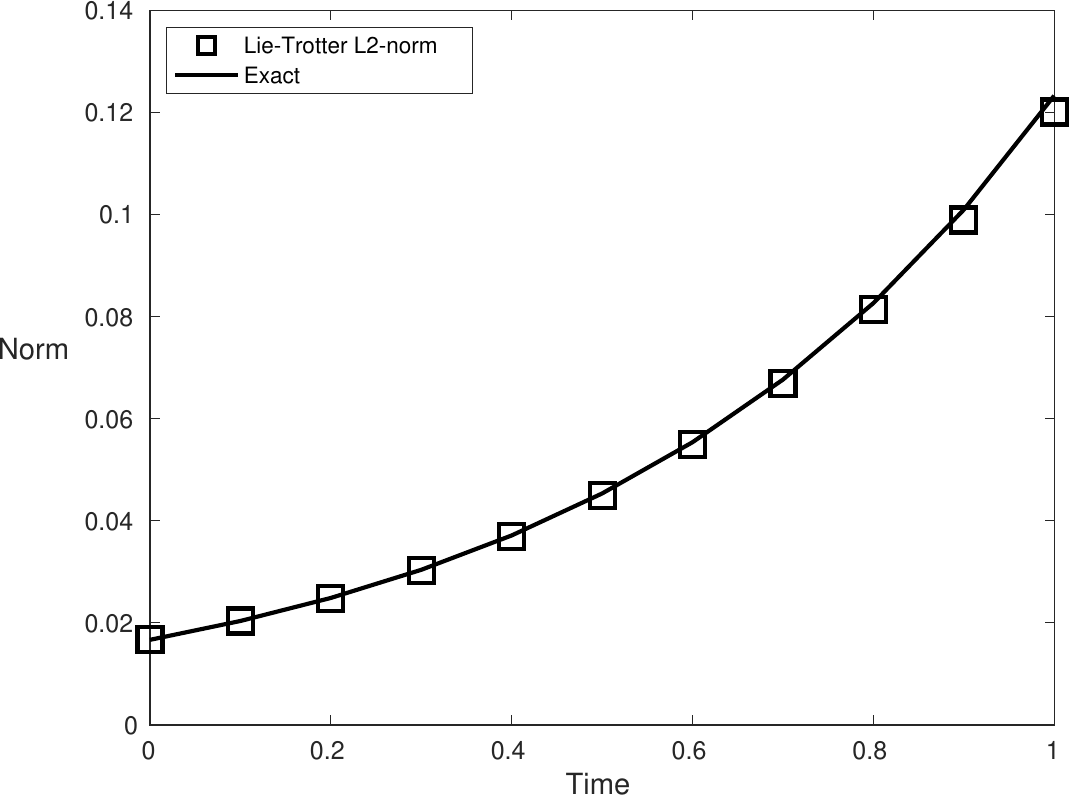}
  \caption{Noise given by $\sigma_1(x,v)=1$, $K=1$.}
\end{subfigure}%
\begin{subfigure}{.5\textwidth}
  \centering
  \includegraphics[width=.9\linewidth]{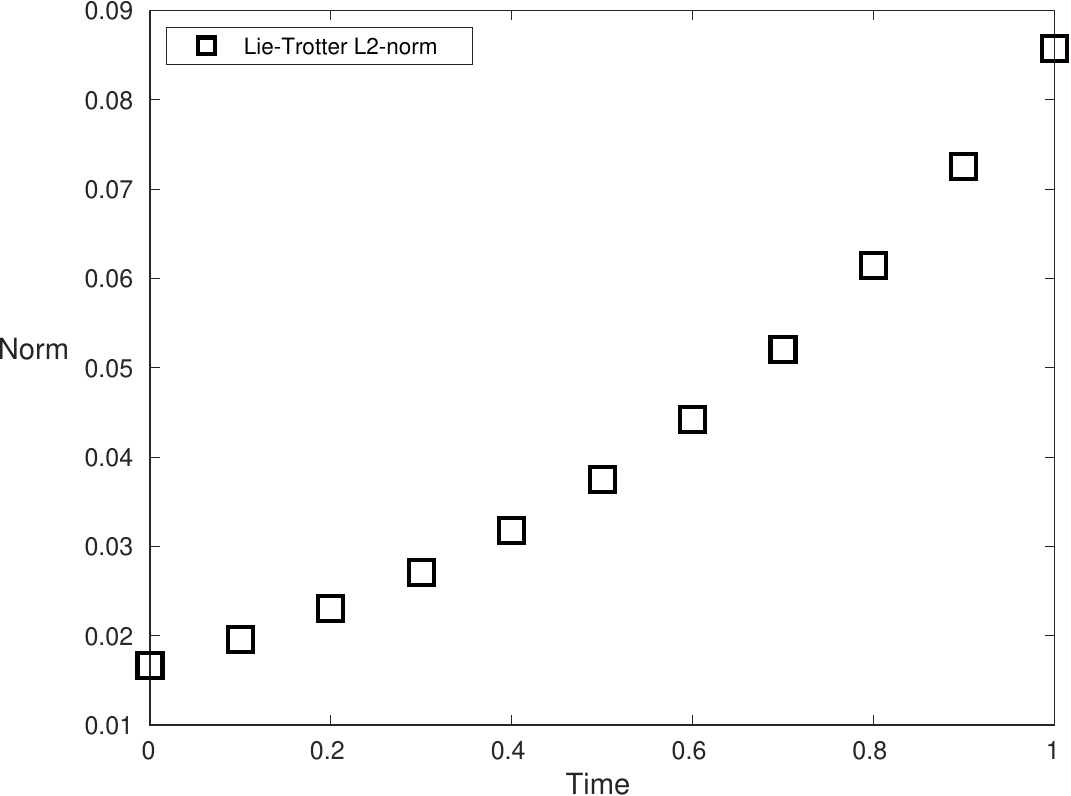}
  \caption{Noise given by~\eqref{eq:sigma1sin}, $K=1$.}
\end{subfigure}\\[1ex]%
\begin{subfigure}{.5\textwidth}
  \centering
  \includegraphics[width=.9\linewidth]{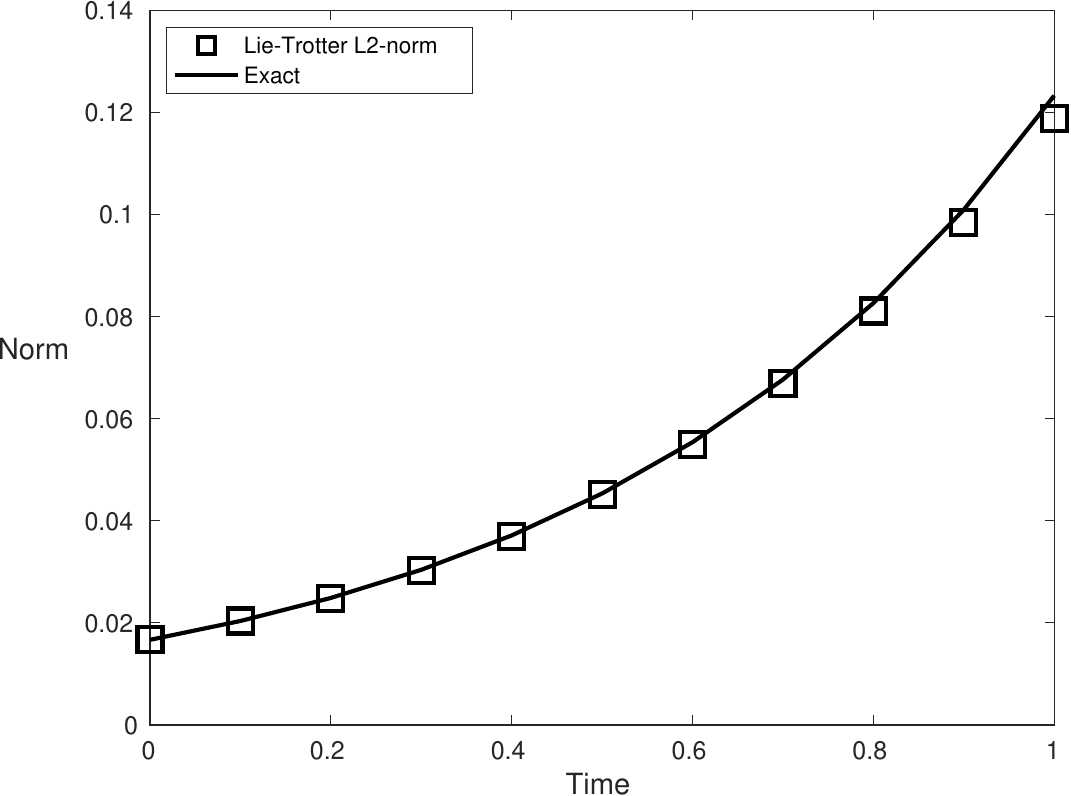}
  \caption{Noise given by~\eqref{eq:sigma12}, $K=2$.}
\end{subfigure}
\caption{Evolution law for the $L_{x,v}^2$ norm: illustration of Proposition~\ref{propo:LT-mult-Strato} when applying the Lie--Trotter scheme~\eqref{eq:LT-mult-Strato} with $\tau=0.1$ to the SPDE with multiplicative Stratonovich noise~\eqref{eq:SPDE-mult-Strato} with time-step size $\tau=0.1$.}
\label{fig:normMultiStrato}
\end{figure}

We conclude these numerical experiments in the multiplicative Stratonovich noise case by investigating the mean-square order of convergence of the Lie--Trotter splitting scheme~\eqref{eq:LT-mult-Strato}. The same procedure as in the multiplicative It\^o noise case (Section~\ref{sec:multiplicativeIto}) is applied. A reference solution is computed using the splitting scheme with time-step size $\tau_{\rm ref}=2^{-14}$, and the errors are computed when the time-step size $\tau$ takes values in $\{2^{-7},\ldots,2^{-13}\}$. The expectation is computed using a Monte Carlo averaging procedure over $500$ independent samples. The discretization parameters are
$\delta x=\frac{1}{100}$, $\delta v=\frac{4\pi}{200}$. The final time is $T=0.5$. The noise is given by~\eqref{eq:sigma1sin} or by~\eqref{eq:sigma12}. The results are presented in a loglog plot in Figure~\ref{fig:ms-MultStrato}. We observe a mean-square convergence order equal to $1$.

\begin{figure}[h]
\begin{subfigure}{.5\textwidth}
  \centering
  \includegraphics[width=.9\linewidth]{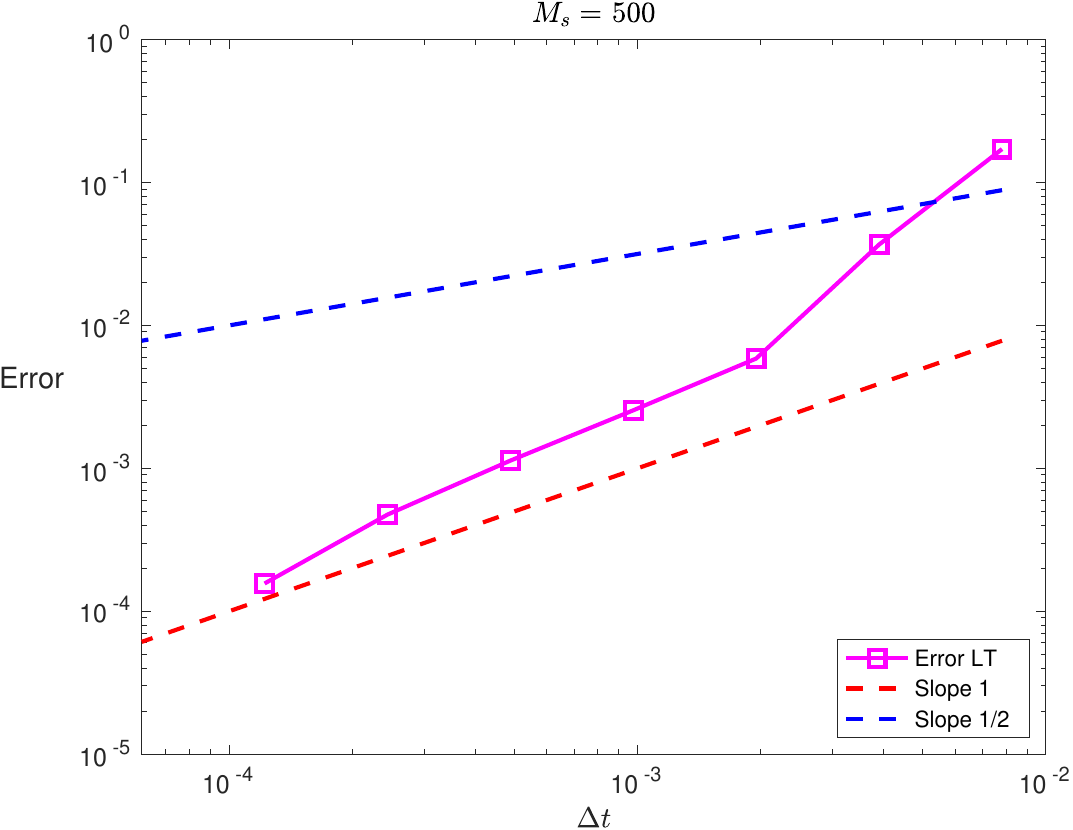}
  \caption{Noise: $\sigma_1(x,v)=\begin{cases} \sin(v) & \text{for}\quad |v|\leq 3 \\ 0 & \text{else}. \end{cases}$}
\end{subfigure}%
\begin{subfigure}{.5\textwidth}
  \centering
  \includegraphics[width=.9\linewidth]{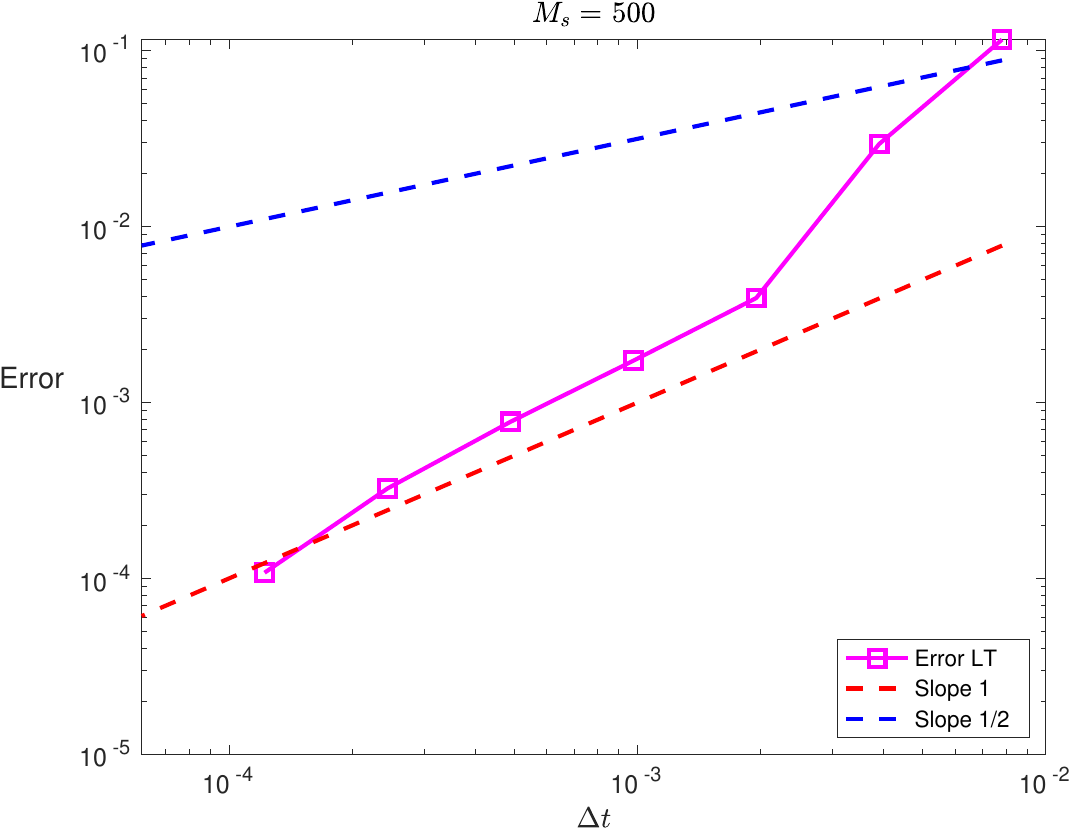}
  \caption{Noise: $\sigma_1(x,v)=\cos{(x)}$ and $\sigma_2(x,v)=\sin{(x)}$.}
\end{subfigure}
\caption{Mean-square errors: Lie--Trotter scheme~\eqref{eq:LT-mult-Strato} applied to the SPDE with multiplicative Stratonovich noise~\eqref{eq:SPDE-mult-Strato} driven by one-dimensional noise ($K=1$, left) and by two-dimensional noise ($K=2$, right).}
\label{fig:ms-MultStrato}
\end{figure}

\section{The stochastic linear Vlasov equation perturbed by transport noise}\label{sec:transport}

In this section, we consider a stochastic perturbation of the Vlasov partial differential equation ~\eqref{eq:Vlasov} where the noise is of transport type: for $t\ge 0, x\in\T^d, v\in \R^d$
\begin{equation}\label{eq:SPDE-transport}
\left\lbrace
\begin{aligned}
&\text df^\tr(t,x,v)+v\cdot\nabla_xf^\tr(t,x,v)\,\text dt+E(x)\cdot\nabla_v f^\tr(t,x,v)\,\text dt+\nabla_vf^\tr(t,x,v)\odot \text d\bm{W}(t,x)=0~,\\
&f^\tr(0,x,v)=f_0(x,v)~,
\end{aligned}
\right.
\end{equation}
where the noise $\bm{W}(t,x)$ is given by~\eqref{eq:Wtx} and the notation
\begin{align*}
\nabla_vf^\tr(t,x,v)\odot \text d\bm{W}(t,x)&=\sum_{k=1}^{K}\nabla_vf^\tr(t,x,v)\cdot \bm{\sigma}_{k}(x)\circ \text d\beta_k(t)\\
&=\sum_{j=1}^{d}\sum_{k=1}^{K}\partial_{v_j}f^\tr(t,x,v)\sigma_{j,k}(x) \circ \text d\beta_k(t),
\end{align*}
is used, with $\circ$ denoting the Stratonovich product and $\odot$ denoting the combination of the Stratonovich product $\circ$ and of the inner product $\cdot$ in the Euclidean space $\R^d$.

To identify the equivalent It\^o formulation of the stochastic partial differential equation~\eqref{eq:SPDE-transport}, it is convenient to set for all $x\in\T^d$ and $1\le i,j\le d$
\[
a_{i,j}(x)=\sum_{k=1}^{K}\sigma_{i,k}(x)\sigma_{j,k}(x).
\]
The It\^o formulation of~\eqref{eq:SPDE-transport} is then the following: for $t\ge 0, x\in\T^d, v\in \R^d$
\begin{equation}\label{eq:SPDE-transport-Ito}
\left\lbrace
\begin{aligned}
&\text df^\tr(t,x,v)+v\cdot\nabla_xf^\tr(t,x,v)\,\text dt+E(x)\cdot\nabla_v f^\tr(t,x,v)\,\text dt+\nabla_vf^\tr(t,x,v)\cdot \text d\bm{W}(t,x)\\
&=\frac12\sum_{k=1}^{K}(\bm{\sigma}_k(x)\cdot \nabla_v)^2 f^\tr(t,x,v)\,\text dt~,\\
&f^\tr(0,x,v)=f_0(x,v)~,
\end{aligned}
\right.
\end{equation}
with the notation
\begin{align*}
\nabla_vf^\tr(t,x,v)\cdot \text d\bm{W}(t,x)&=\sum_{j=1}^{d}\sum_{k=1}^{K}\partial_{v_j}f^\tr(t,x,v)\sigma_{j,k}(x) \text d\beta_k(t)\\
\sum_{k=1}^{K}(\bm{\sigma}_k(x,v)\cdot \nabla_v)^2 f^\tr(t,x,v)&=\sum_{i,j=1}^{d}\sum_{k=1}^{K}\sigma_{i,k}(x)\sigma_{j,k}(x)\partial_{v_i}\partial_{v_j}f^\tr(t,x,v)\\
&=\sum_{i,j=1}^{d}a_{i,j}(x)\partial_{v_i}\partial_{v_j}f^\tr(t,x,v).
\end{align*}

\subsection{Analysis and properties of the problem}
Recall that the deterministic linear Vlasov equation~\eqref{eq:Vlasov} is connected with the ordinary differential equation~\eqref{eq:ODE}, see Section~\ref{sec:deterministic}. This connection is explored in Sections~\ref{sec:additive} and~\ref{sec:multiplicative} for some stochastic perturbations of~\eqref{eq:Vlasov}. Contrary to those situations, for the stochastic linear Vlasov equation driven by transport noise~\eqref{eq:SPDE-transport}, the connection requires to introduce the stochastic differential equation
\begin{equation}\label{eq:SDE}
\left\lbrace
\begin{aligned}
&\text dX_t=V_t\\
&\text dV_t=E(X_t)\,\text dt+\sum_{k=1}^{K}\bm{\sigma}_k(X_t)\,\text d\beta_k(t),
\end{aligned}
\right.
\end{equation}
for all $t\ge 0$, instead of the ordinary differential equation~\eqref{eq:ODE}. In the second line of~\eqref{eq:SDE}, one may use the notation $\text d\bm{W}(t,X_t)$ to refer to $\sum_{k=1}^{K}\bm{\sigma}_k(X_t)\,\text d\beta_k(t)$.

Assume that $\bigl(f^\tr(t)\bigr)_{t\ge 0}$ is a sufficiently regular solution of the SPDE~\eqref{eq:SPDE-transport} (or of its equivalent formulation~\eqref{eq:SPDE-transport-Ito}), then for any solution $\bigl(X_t,V_t\bigr)_{t\ge 0}$ of the SDE system~\eqref{eq:SDE}, applying the It\^o--Wentzell formula (see \autoref{sec:ItoWentzell}) one obtains the identity
\[
\text df^\tr(t,X_t,V_t)=0.
\]
This means that the SDE~\eqref{eq:SDE} provides characteristic curves for the stochastic Vlasov equation~\eqref{eq:SPDE-transport} driven by transport noise. This gives a strategy to solve the SPDE~\eqref{eq:SPDE-transport} by the method of lines. Instead of using the flow $\bigl(\phi_t\bigr)_{t\in\R}$ associated with the ODE~\eqref{eq:ODE}, in the present case this strategy is based on the notion of stochastic flow of diffeomorphisms, see for instance the monograph~\cite{MR1472487}. We use the notation $\bigl(\psi_t\bigr)_{t\ge 0}$ to denote the stochastic flow of diffeomorphisms associated with the SDE~\eqref{eq:SDE}. In particular, for any (deterministic) initial values $(X_0,V_0)\in\T^d\times\R^d$, the unique solution of the SDE~\eqref{eq:SDE} at any time $t\ge 0$ is given by $(X_t,V_t)=\psi_t(X_0,V_0)$. In addition, almost surely the mapping $\psi_t$ preserves the volume in $\T^d\times\R^d$ for all $t\ge 0$.

It results from the identity above that, for all $t\ge 0$ and all (deterministic) $X_0\in\T^d$ and $V_0\in\R^d$, one has
\[
f^\tr(t,\psi_t(X_0,V_0))=f^\tr(t,X_t,V_t)=f^\tr(0,X_0,V_0)=f_0(X_0,V_0).
\]
Finally one obtains the expression of the solution of~\eqref{eq:SPDE-transport} using characteristic curves: for all $t\ge 0$, $x\in\T^d$, $v\in\R^d$ one has
\begin{equation}\label{eq:solutionTransport}
f^\tr(t,x,v)=f_0(\psi_t^{-1}(x,v)).
\end{equation}
Conversely, if $(t,x,v)\in\R^+\times\T^d\times\R^d\mapsto f^\tr(t,x,v)$ is defined by~\eqref{eq:solutionTransport} and if the initial value $f_0$ is of class $\mathcal{C}^2$, then the stochastic process $\bigl(f^\tr(t)\bigr)_{t\ge 0}$ defined by~\eqref{eq:solutionTransport} is a weak solution of the SPDE~\eqref{eq:SPDE-transport-Ito} (the It\^o formulation is considered): for any smooth compactly supported function $\varphi:\T^d\times\R^d\to\R$, one has
\begin{equation}\label{eq:weaksolutionTransport}
\begin{aligned}
\text d\Bigl(\iint \varphi(x,v)f(t,x,v)\,\text dx\,\text dv\Bigr)&=\iint v\cdot \nabla_x\varphi(x,v)f(t,x,v)\,\text dx\,\text dv\,\text dt\\
&+\iint E(x)\cdot \nabla_v\varphi(x,v)f(t,x,v)\,\text dx\,\text dv\,\text dt\\
&+\iint \sum_{k=1}^{K}\sigma_k(x)\cdot \nabla_v\varphi(x,v) f(t,x,v)\,\text dx\,\text dv\,\text d\beta_k(t)\\
&+\frac12\iint\sum_{i,j=1}^{d}a_{i,j}(x)\partial_{v_i}\partial_{v_j}\varphi(x,v)f(t,x,v)\,\text dx\,\text dv\,\text dt.
\end{aligned}
\end{equation}

The proof of the identity~\eqref{eq:weaksolutionTransport} combines two arguments. First, since the diffeomorphism $\psi_t$ preserves the volume of $\T^d\times\R^d$ for all $t\ge 0$, using~\eqref{eq:solutionTransport} one has the identity
\[
\begin{aligned}
\iint \varphi(x,v)f(t,x,v)\,\text dx\,\text dv&=\iint \varphi(\psi_t(x,v))f(0,x,v)\,\text dx\,\text dv\\
&=\iint \varphi(X_t^{x,v},V_t^{x,v})f(0,x,v)\,\text dx\,\text dv,
\end{aligned}
\]
where $(X_t^{x,v},V_t^{x,v})=\psi_t(x,v)$ is the solution at time $t\ge 0$ of~\eqref{eq:SDE} with initial values $X_0^{x,v}=x$ and $V_ 0^{x,v}=v$. It then remains to apply the standard It\^o formula to $t\mapsto \varphi(X_t^{x,v},V_t^{x,v})$ and to use integration by parts arguments combined with the identity above obtained by the first argument to obtain~\eqref{eq:weaksolutionTransport}. The details are omitted.
We refer to~\cite{MR3513594} for instance where this type of arguments are employed in the analysis of a class of nonlinear transport equations with transport noise.

Using the expression~\eqref{eq:solutionTransport} of the solution of~\eqref{eq:SPDE-transport}, it is straightforward to check that the preservation properties satisfied in the deterministic case by the solution $\bigl(f^\de(t)\bigr)_{t\ge 0}$ of the deterministic linear Vlasov equation~\eqref{eq:Vlasov} are also satisfied when a transport noise perturbation is applied. The proof is omitted.

\begin{proposition}\label{propo:transport}
Let $\bigl(f^\tr(t)\bigr)_{t\ge 0}$ be the solution of the SPDE~\eqref{eq:SPDE-transport} with (non-random) initial value $f_0$. One has the following properties.
\begin{itemize}
\item \emph{Preservation of positivity}. Assume that $f_0(x,v)\ge 0$ for all $(x,v)\in\T^d\times\R^d$. Then one has $f^\tr(t,x,v)\ge 0$ almost surely for all $t\ge 0$ and $(x,v)\in\T^d\times\R^d$.
\item \emph{Preservation of integrals}. Let $\Phi:\R\to\R^+$ be a real-valued measurable mapping. Then for all $t\ge 0$ one has almost surely
\[
\iint \Phi(f^\tr(t,x,v))\,\text dx\, \text dv=\iint \Phi(f_0(x,v))\,\text dx\, \text dv.
\]
In particular, if $p\in[1,\infty)$ and if $f_0\in L_{x,v}^p$, then for all $t\ge 0$ one has $f^\tr(t)\in L_{x,v}^p$ almost surely and
\[
\|f^\tr(t)\|_{L_{x,v}^p}=\|f_0\|_{L_{x,v}^p}.
\]
\end{itemize}
\end{proposition}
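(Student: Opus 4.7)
The plan is to bypass the SPDE~\eqref{eq:SPDE-transport} entirely and argue directly from the characteristic representation~\eqref{eq:solutionTransport} combined with the almost sure volume preservation of the stochastic flow $(\psi_t)_{t \ge 0}$ mentioned just before the proposition. This mirrors the deterministic argument of Section~\ref{sec:deterministic}, with $\psi_t$ now playing the role of $\phi_t$. Working pathwise is legitimate because both $f_0$ and the integrand $\Phi$ are deterministic; all the randomness is carried by the diffeomorphism $\psi_t$.

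For preservation of positivity, the argument is immediate and pointwise: on the full-probability event on which $\psi_t$ is a $\mathcal{C}^\infty$-diffeomorphism of $\T^d \times \R^d$ for every $t \ge 0$ (see~\cite{MR1472487}), one has $\psi_t^{-1}(x,v) \in \T^d \times \R^d$, hence $f^\tr(t,x,v) = f_0(\psi_t^{-1}(x,v)) \ge 0$ for all $(t,x,v)$ as soon as $f_0 \ge 0$ everywhere.

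For preservation of integrals, on the same full-probability event I would perform the change of variables $(y,w) = \psi_t^{-1}(x,v)$ in the integral. Volume preservation by $\psi_t$ gives $\text dx\,\text dv = \text dy\,\text dw$, so that
\[
\iint \Phi(f^\tr(t,x,v))\,\text dx\,\text dv = \iint \Phi(f_0(\psi_t^{-1}(x,v)))\,\text dx\,\text dv = \iint \Phi(f_0(y,w))\,\text dy\,\text dw.
\]
Specializing to $\Phi(\cdot) = |\cdot|^p$ with $p \in [1,\infty)$ yields the $L^p_{x,v}$ isometry as a direct consequence.

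The only ingredient requiring external input is the almost sure volume preservation of $\psi_t$, which is stated as a fact in the excerpt. Its justification rests on the observation that both the drift $(v, E(x))$ and each noise coefficient $(0, \bm{\sigma}_k(x))$ of the SDE~\eqref{eq:SDE} are divergence-free on $\T^d \times \R^d$: indeed $\nabla_x \cdot v + \nabla_v \cdot E(x) = 0$ since $E$ depends only on $x$, and $\nabla_v \cdot \bm{\sigma}_k(x) = 0$ since $\bm{\sigma}_k$ is independent of $v$ (so that It\^o and Stratonovich forms coincide and no correction affects the divergence). Combined with the standard formula for the Jacobian determinant of a stochastic flow, this yields measure preservation; I would simply invoke~\cite{MR1472487} rather than reproduce the derivation. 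No real obstacle is expected: the whole proof is a pathwise translation of the deterministic argument of Section~\ref{sec:deterministic}.
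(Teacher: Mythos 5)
Your argument is correct and is precisely the route the paper intends: the paper omits the proof, stating only that the properties follow from the representation $f^\tr(t,x,v)=f_0(\psi_t^{-1}(x,v))$ in~\eqref{eq:solutionTransport} together with the almost sure volume preservation of the stochastic flow $\psi_t$, which is exactly your pathwise change-of-variables argument. Your additional remark on the divergence-free structure of the drift and noise coefficients of~\eqref{eq:SDE} is a sound justification of the volume-preservation fact the paper simply asserts.
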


\subsection{Splitting scheme}
We are now in position to introduce the proposed numerical scheme for the discretization of the SPDE with transport noise~\eqref{eq:SPDE-transport}. Like in the previous sections, a Lie--Trotter splitting strategy is applied. To deal with the stochastic perturbation in~\eqref{eq:SPDE-transport}, one needs to consider the auxiliary stochastic subsystem
\[
\text df(t,x,v)+\nabla_vf(t,x,v)\odot \text d\bm{W}(t,x)=0~,\quad (t,x,v)\in\R^+\times\T^d\times\R^d.
\]
This auxiliary stochastic subsystem can be solved exactly: for all $t\ge s\ge 0$ and all $(x,v)\in\T^d\times\R^d$ one has
\[
f(t,x,v)=f\bigl(s,x,v-({\bm W}(t,x)-{\bm W}(s,x))\bigr).
\]
Alternative notation for solving the auxiliary stochastic subsystem above can be used: for all $t\ge s\ge 0$ one has
\[
f(t)=T({\bm W}(t)-{\bm W}(s))f(s)=T^d(\beta_d(t)-\beta_d(s))\ldots T^1(\beta_1(t)-\beta_1(s))f(s),
\]
where for any mapping $\bm{\sigma}:x\in\T^d\mapsto \sigma(x)\in\R^d$ one has
\[
T(\bm{\sigma})f(x,v)=f(x,v-\bm{\sigma}(x))~,\quad x\in\T^d, v\in\R^d
\]
and where the auxiliary linear operators $T^k(y)$, with $y\in\R$ and $1\le k\le K$, are defined by
\[
T^k(y)f(x,v)=f(x,v-y\sigma_k(x))~,\quad x\in\T^d, v\in\R^d.
\]
Using the Lie--Trotter integrator~\eqref{eq:LTdeter} for the deterministic part and combining the discretizations of the deterministic and stochastic parts yields the following scheme: given the initial value $f_0$ and the time-step size $\tau\in(0,1)$, set $f_0^\tr=f_0$ and for any nonnegative integer $n\ge 0$ set
\begin{equation}\label{eq:LT-transport}
f_{n+1}^\tr=T(\delta \bm{W}_{n})S^2(\tau)S^1(\tau)f_n^\tr=T^{d}(\delta\beta_{n,d})\circ \cdots \circ T^1(\delta\beta_{n,1})\circ S^2(\tau)S^1(\tau)f_n^\tr~,\quad n\ge 0.
\end{equation}
where the Wiener increments $\delta\beta_{n,k}$ and $\delta\bm{W}_n$ are given by~\eqref{eq:WincBeta} and~\eqref{eq:WincbW} respectively, see Section~\ref{sec:notation}.

The Lie--Trotter splitting scheme~\eqref{eq:LT-transport} is consistent with Stratonovich interpretation of the noise in the SPDE~\eqref{eq:SPDE-transport}, and satisfies the same properties as the exact solution stated in Proposition~\ref{propo:transport}.
\begin{proposition}\label{propo:transportnum}
Let $(f_n^\tr)_{n\ge 0}$ be the solution of the Lie--Trotter splitting scheme~\eqref{eq:LT-transport} with initial value $f_0$.
One then has the following properties.
\begin{itemize}
\item \emph{Preservation of positivity}. Assume that $f_0(x,v)\ge 0$ for all $(x,v)\in\T^d\times\R^d$. Then one has $f_n^\tr(x,v)\ge 0$ almost surely for any nonnegative integer $n\ge 0$ and all $(x,v)\in\T^d\times\R^d$.
\item \emph{Preservation of integrals}. Let $\Phi:\R\to\R^+$ be a real-valued measurable mapping. Then for all $n\ge 0$ one has almost surely
\[
\iint \Phi(f_n^\tr(x,v))\,\text dx\, \text dv=\iint \Phi(f_0(x,v))\,\text dx\, \text dv.
\]
In particular, if $p\in[1,\infty)$ and if $f_0\in L_{x,v}^p$, then for all $n\ge 0$ one has $f_n^\tr\in L_{x,v}^p$ almost surely and
\[
\|f_n^\tr\|_{L_{x,v}^p}=\|f_0\|_{L_{x,v}^p}
\]
almost surely.
\end{itemize}
\end{proposition}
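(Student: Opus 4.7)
The plan is to establish both assertions by induction on $n\ge 0$, after observing that one step of the scheme~\eqref{eq:LT-transport} is the composition of three elementary pathwise operators: $S^1(\tau)$, $S^2(\tau)$, and $T(\delta\bm{W}_n)$. The base case $n=0$ is trivial since $f_0^\tr=f_0$. For the induction step, I will show that each of the three operators individually preserves pointwise nonnegativity as well as the integral $\iint \Phi(\cdot)\,\text dx\,\text dv$, which together yield the claimed properties. The $L_{x,v}^p$-isometry property then follows by choosing $\Phi(y)=|y|^p$.

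Preservation of positivity is immediate pathwise: using $S^1(\tau)f(x,v)=f(x-\tau v,v)$, $S^2(\tau)f(x,v)=f(x,v-\tau E(x))$, and $T(\bm{\sigma})f(x,v)=f(x,v-\bm{\sigma}(x))$, each of the three operators simply evaluates $f$ at another point of $\T^d\times\R^d$, so nonnegativity of $f$ transfers to the image. Hence, assuming $f_n^\tr(x,v)\ge 0$ almost surely for every $(x,v)$, the same property is inherited by $f_{n+1}^\tr$ on every sample path.

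For the preservation of integrals I argue pathwise: conditioning on a realization of $\delta\bm{W}_n$ (so that $T(\delta\bm{W}_n)$ becomes a deterministic map), it suffices to verify that for every measurable $\Phi:\R\to\R^+$, every measurable $g:\T^d\times\R^d\to\R$ and every smooth $\bm{\sigma}:\T^d\to\R^d$,
\begin{align*}
\iint \Phi(g(x-\tau v,v))\,\text dx\,\text dv &= \iint \Phi(g(x,v))\,\text dx\,\text dv, \\
\iint \Phi(g(x,v-\tau E(x)))\,\text dx\,\text dv &= \iint \Phi(g(x,v))\,\text dx\,\text dv, \\
\iint \Phi(g(x,v-\bm{\sigma}(x)))\,\text dx\,\text dv &= \iint \Phi(g(x,v))\,\text dx\,\text dv.
\end{align*}
The first identity follows from Fubini combined with the change of variables $x\mapsto x+\tau v$ at fixed $v$; the second from $v\mapsto v+\tau E(x)$ at fixed $x$; the third from $v\mapsto v+\bm{\sigma}(x)$ at fixed $x$. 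Each such map has Jacobian $1$, so the integrals coincide. Applying the three identities in succession starting from $g=f_n^\tr$ yields $\iint \Phi(f_{n+1}^\tr(x,v))\,\text dx\,\text dv=\iint \Phi(f_n^\tr(x,v))\,\text dx\,\text dv$ almost surely, and induction on $n$ delivers the claim.

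The only real subtlety is the measure-preserving property for the stochastic map $T(\delta\bm{W}_n)$, whose shift $v\mapsto v-\delta\bm{W}_n(x)$ depends on $x$. This is not a genuine obstacle: for each fixed $x\in\T^d$, the shift in $v\in\R^d$ has Jacobian $1$ and Lebesgue measure on $\R^d$ is translation invariant, and Fubini then delivers the claimed identity on every sample path. No additional assumption on the $\bm{\sigma}_k$ beyond the smoothness already built into Section~\ref{sec:notation} is needed, and the argument is entirely pathwise, so the conclusions hold almost surely and uniformly in the time-step size $\tau\in(0,1)$.
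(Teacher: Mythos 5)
Your proof is correct and follows exactly the strategy of the paper's (much terser) proof: induction on $n$, combined with the observation that each of the elementary operators $S^1(\tau)$, $S^2(\tau)$ and $T(\delta\bm{W}_n)$ individually preserves pointwise nonnegativity and the integral of $\Phi(\cdot)$, the latter via pathwise volume-preserving (unit-Jacobian) changes of variables. The paper states this in one sentence; you have simply written out the details, including the correct handling of the $x$-dependent velocity shift in $T(\delta\bm{W}_n)$.
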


\begin{proof}[Proof of Proposition~\ref{propo:transportnum}]
All the results are proved by recursion, using the fact that the operators $T(\delta \bm{W}_n)$, $S^2(\tau)$ and $S^1(\tau)$ satisfy the considered properties.
\end{proof}

Like for the splitting scheme~\eqref{eq:LTdeter} defined in Section~\ref{sec:deterministic} for the discretization of the deterministic linear Vlasov equation~\eqref{eq:Vlasov}, the Lie--Trotter splitting scheme~\eqref{eq:LT-transport} can be interpreted as a discrete version of the expression~\eqref{eq:solutionTransport} for the exact solution of~\eqref{eq:SPDE-transport}, where the stochastic flow $\psi_t$ is approximated using a splitting integrator applied to the SDE system~\eqref{eq:SDE}.

\begin{remark}\label{rem:temporalnoiseTransport}
If the noise in the SPDE~\eqref{eq:SPDE-transport} is a purely temporal Wiener process, i.e. ${\bm W}(t,x)={\bm W}(t)=\sum_{k=1}^{K}\beta_k(t)\bm{\sigma}_k$ for all $(t,x)\in\R^+\times\T^d$ where $\bm{\sigma}_1,\ldots,\bm{\sigma}_K$ are elements of $\R^d$, then the exact solution of~\eqref{eq:SPDE-transport} and the numerical solution given by~\eqref{eq:LT-transport} can be written
\[
f^\tr(t)=T({\bm W}(t))f^\de(t)~,\quad t\ge 0~;\quad f_n^\tr=T({\bm W}(t_n))f_n^\de~,\quad n\ge 0,
\]
where the auxiliary continuous-time process $\bigl(f^\de(t)\bigr)_{t\ge 0}$ is the solution of the deterministic PDE~\eqref{eq:Vlasov} and the auxiliary continuous-time process $\bigl(f_n^\de\bigr)_{n\ge 0}$ is given by the deterministic Lie--Trotter splitting scheme~\eqref{eq:LTdeter}.

In that situation, Propositions~\ref{propo:transport} and~\ref{propo:transportnum} are straightforward consequences of the results described for the deterministic problem in Section~\ref{sec:deterministic}.
\end{remark}

\subsection{Numerical experiments}

We begin the numerical experiments by illustrating the behavior of the linear Vlasov equation perturbed by transport noise~\eqref{eq:SPDE-transport}, in dimension $d=1$. In all the experiments below, the initial value $f_0$ is given by~\eqref{eq:f0} and the vector field $E$ is given by~\eqref{eq:E}. The discretization parameters are given by $\delta x=\frac{1}{400}$, $\delta v=\frac{4\pi}{800}$, and $\tau=0.1$. The snapshots of the numerical solution at times $\{0,0.5,1,1.5,2,2.5\}$ computed using the splitting scheme~\eqref{eq:LT-transport} are provided in Figure~\ref{fig:snap-transp}, with $K=1$ and diffusion coefficient $\sigma_1(x,v)=0.5$. One observes that the solution remains nonnegative, which illustrates the positivity preserving property stated in Proposition~\ref{propo:transportnum} on the considered realization.

\begin{figure}[h]
\begin{subfigure}{.3\textwidth}
  \centering
\includegraphics[width=1\linewidth]{VlasovInit-eps-converted-to.pdf}
\end{subfigure}%
\begin{subfigure}{.3\textwidth}
  \centering
  \includegraphics[width=1\linewidth]{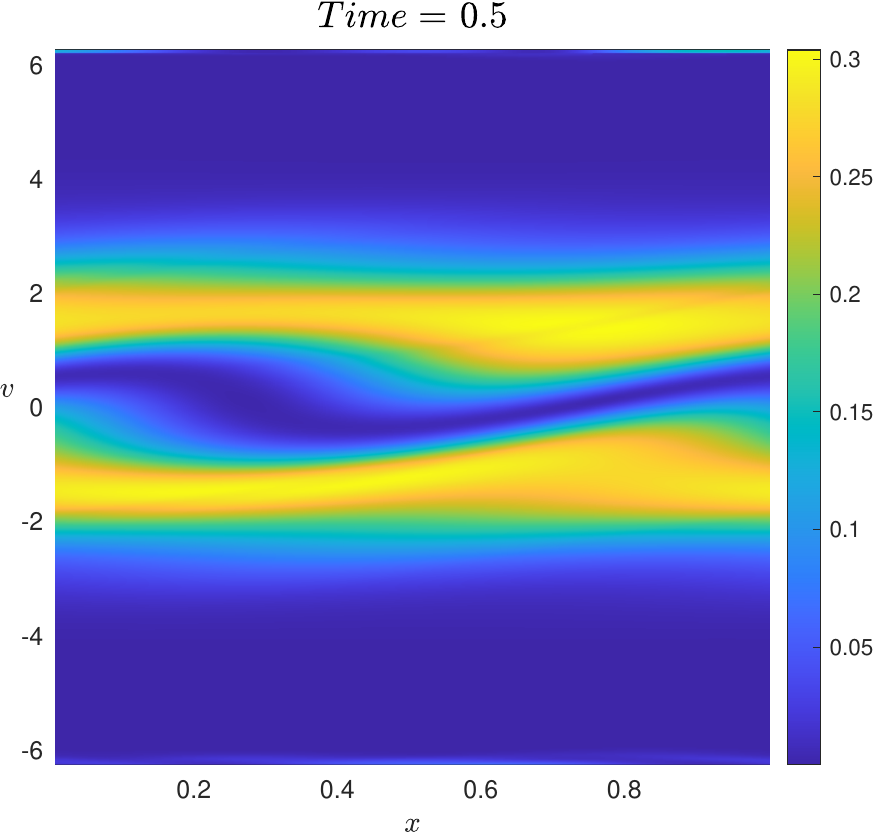}
\end{subfigure}%
\begin{subfigure}{.3\textwidth}
  \centering
  \includegraphics[width=1\linewidth]{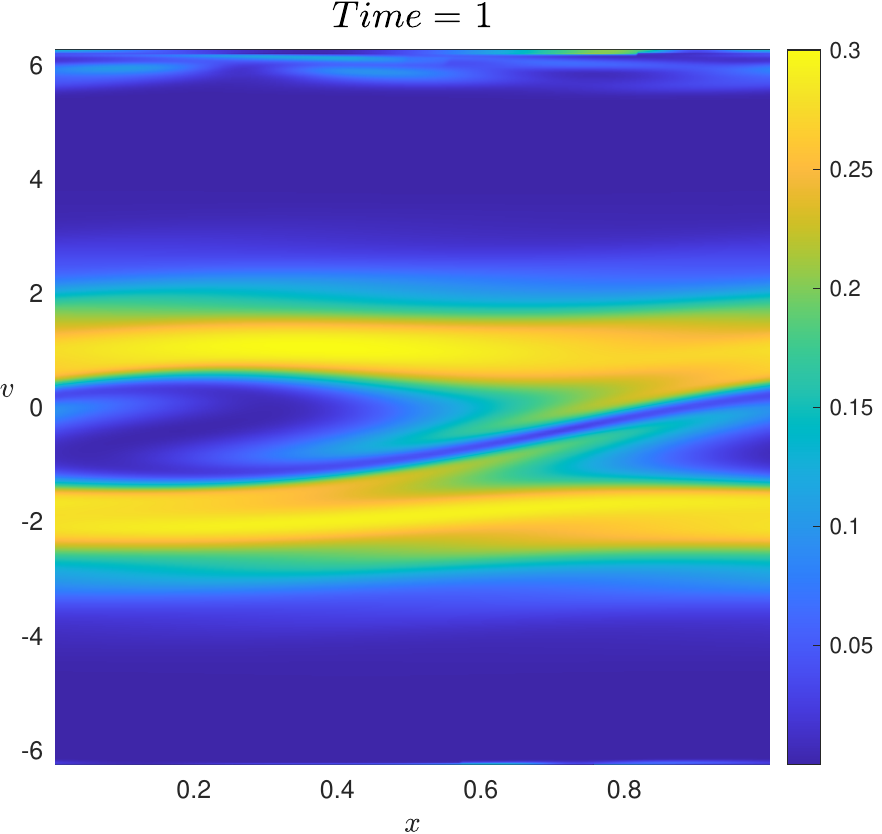}
\end{subfigure}\\[1ex]%
\begin{subfigure}{.3\textwidth}
  \centering
  \includegraphics[width=1\linewidth]{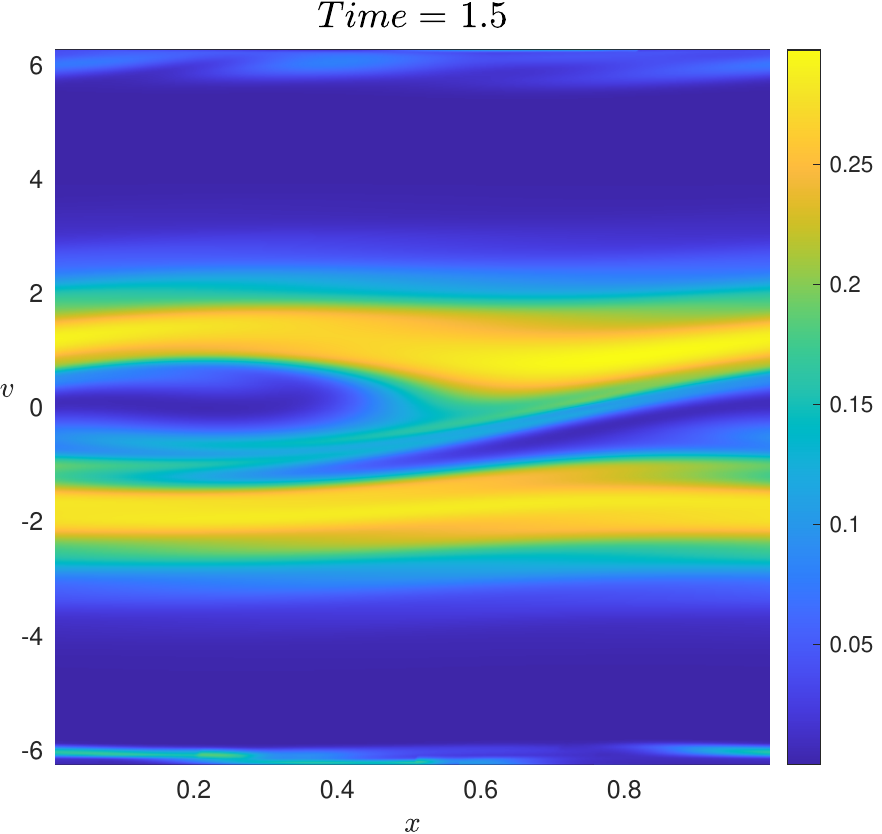}
\end{subfigure}
\begin{subfigure}{.3\textwidth}
  \centering
  \includegraphics[width=1\linewidth]{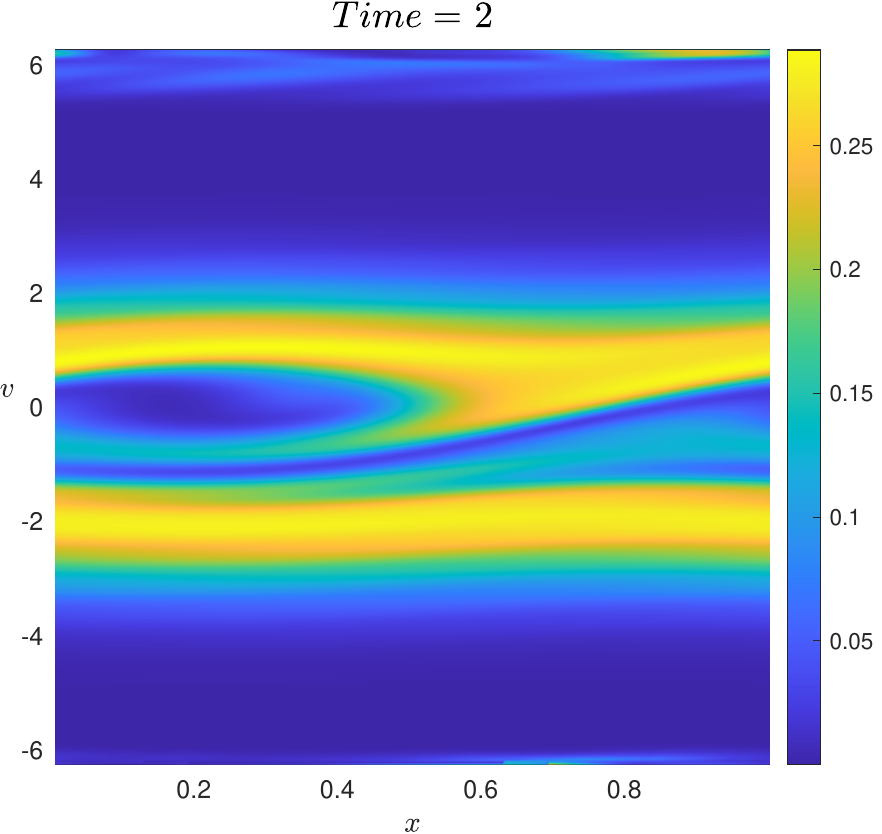}
\end{subfigure}
\begin{subfigure}{.3\textwidth}
  \centering
  \includegraphics[width=1\linewidth]{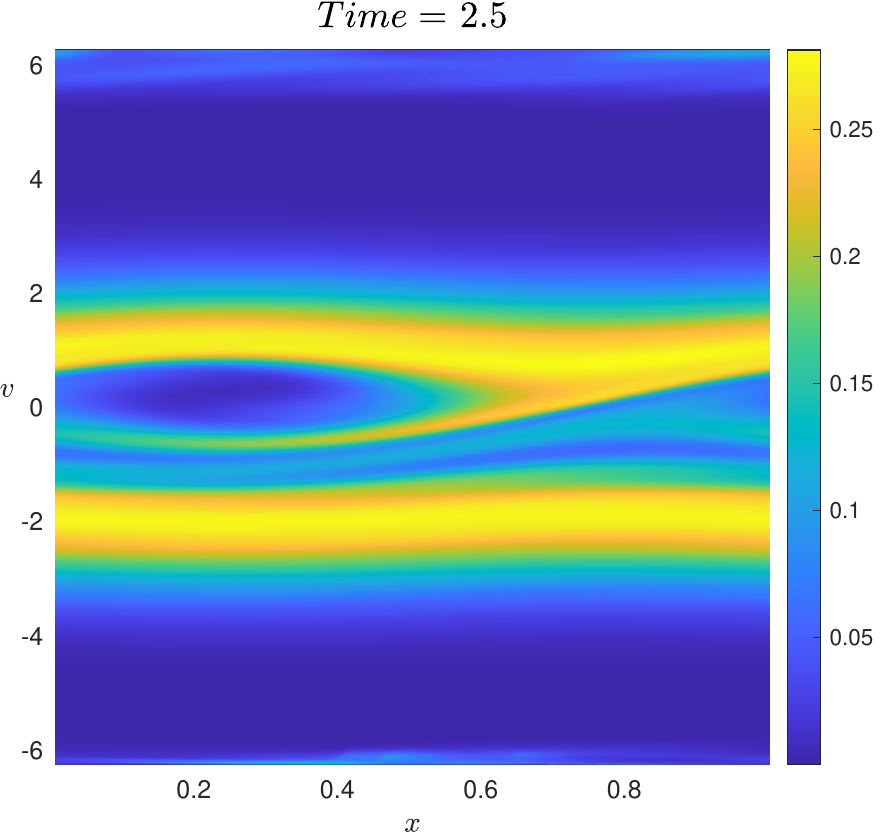}
\end{subfigure}
\caption{Snapshots: approximation of the solution of the stochastic PDE with transport noise~\eqref{eq:SPDE-transport} with initial value $f_0$ given by~\eqref{eq:f0}, with $\sigma_1(x,v)=0.5$ at times~$\{0,0.5,1,1.5,2,2.5\}$, using the Lie--Trotter splitting scheme~\eqref{eq:LT-transport} with time-step size $\tau=0.1$.}
\label{fig:snap-transp}
\end{figure}

Next, in Figure~\ref{fig:normTransport}, we illustrate the almost sure preservation of the $L^1$, $L^3$ and $L^{55}$ norms when the Lie--Trotter splitting scheme~\eqref{eq:LT-transport} is applied to the SPDE~\eqref{eq:SPDE-transport}. This is performed for three independent realizations. The discretization parameters are chosen to be: $\delta x=\frac{1}{3000}$, $\delta v=\frac{4\pi}{3000}$, and $\tau=0.1$. The final time is $T=1$.
In this figure, we observe the preservation of the norms, which illustrates Proposition~\ref{propo:transportnum}.

\begin{figure}[h]
\centering
\includegraphics[width=.5\linewidth]{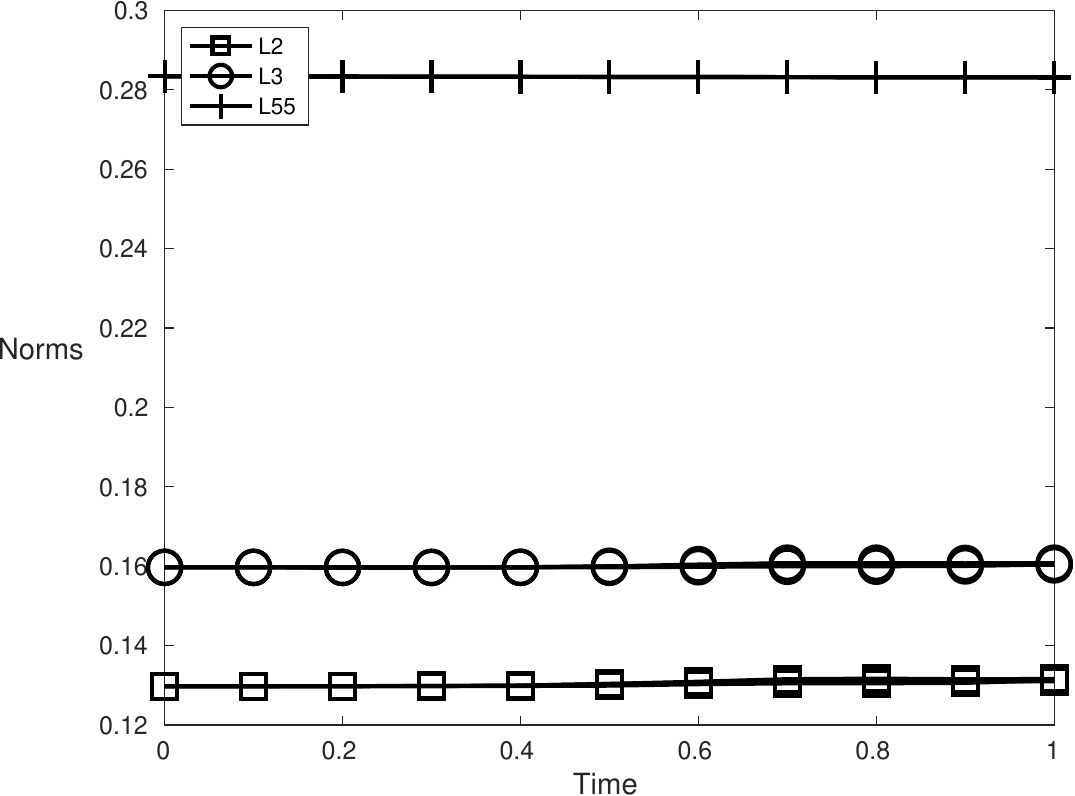}
\caption{Evolution of the $L_{x,v}^1, L_{x,v}^3$ and $L_{x,v}^{55}$ norms: illustration of Proposition~\ref{propo:transportnum} when applying the Lie--Trotter scheme~\eqref{eq:LT-transport} to the SPDE with transport noise~\eqref{eq:SPDE-transport}, with time-step size $\tau=0.1$.}
\label{fig:normTransport}
\end{figure}

We conclude these numerical experiments in the transport noise case by investigating the mean-square order of convergence of the Lie--Trotter splitting scheme~\eqref{eq:LT-transport}. The same procedure as in the previous cases is applied. A reference solution is computed using the splitting scheme with time-step size $\tau_{\rm ref}=2^{-14}$, and the errors are computed when the time-step size $\tau$ takes values in $\{2^{-7},\ldots,2^{-13}\}$. The expectation is computed using a Monte Carlo averaging procedure over $500$ independent samples. The discretization parameters are $\delta x=\frac{1}{100}$, $\delta v=\frac{4\pi}{200}$. The final time is $T=0.5$.
The results are presented in a loglog plot in Figure~\ref{fig:ms-transport}.
We observe a mean-square convergence order of at least $1/2$.

\begin{figure}[h]
\centering
\includegraphics[width=.5\linewidth]{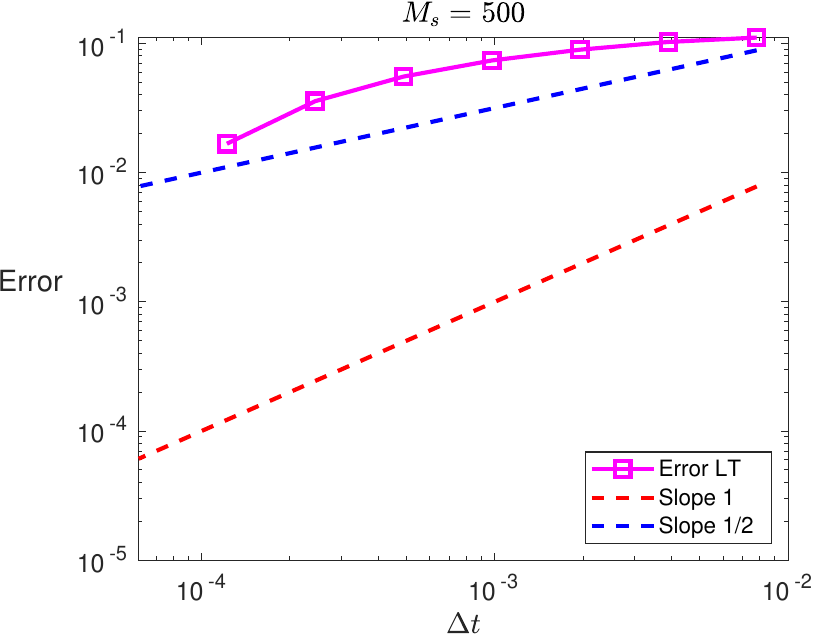}
\caption{Mean-square errors: Lie--Trotter scheme~\eqref{eq:LT-transport}
applied to the SPDE~\eqref{eq:SPDE-transport} driven by one noise ($K=1$, $\sigma_1(x,v)=0.5$).}
\label{fig:ms-transport}
\end{figure}

\section{Conclusion and perspectives}
In this paper, we numerically investigate the behaviour of solutions of linear Vlasov partial differential equations perturbed by Wiener noise, which may be additive noise, multiplicative It\^o and Stratonovich noise, and transport noise. For this purpose, we apply Lie--Trotter splitting integrators for the temporal discretization of these SPDEs.
We show that these time integrators are able to preserve key qualitative properties of the exact solutions, such as preserving norms or ensuring positivity. Several promising theoretical results are provided and illustrated numerically.

We leave some fundamental questions open for future work. Let us mention a few of them.
\begin{enumerate}
\item Based on the numerical experiments of this paper, we conjecture that the order of mean-square convergence of the Lie--Trotter splitting scheme is equal to $1$ for all the considered types of noise. Thus the main perspective for a possible future work is to prove mean-square error bounds.
\item We have only considered the behavior of the temporal discretization error. It will be interesting to study the full discretization error,
and in particular whether some conditions on the time-step size $\tau$ and the mesh sizes $\delta x$ and $\delta v$ need to be imposed for stability or accuracy reasons.
\item Constructing higher-order methods could also be an interesting question. For instance, in the context of SPDEs with small noise, it may be possible to apply a Strang splitting strategy to deal with the deterministic subsystems and obtain better rates of convergence.
\item We have only considered some simple linear Vlasov equations perturbed by additive or linear noise. It may be interesting to study more complex problems, for instance with nonlinear terms.
\end{enumerate}

\begin{appendix}

\section{The It\^o--Wentzell formula}\label{sec:ItoWentzell}

Let $D,K\in\N$ be two integers. Let $\beta_1,\ldots,\beta_K$ be independent standard real-valued Wiener processes, and let $b_j:\R^D\to \R$ and $\varsigma_{j,k}:\R^D\to \R$, $1\le j\le D$, $1\le k\le K$, be Lipschitz continuous mappings.

Assume that the $\R^D$-valued stochastic process $\bigl(\xi(t)\bigr)_{t\ge 0}$ is solution of the stochastic differential equation
\[
\text d\xi_j(t)=b_j(\xi(t))\,\text dt+\sum_{k=1}^{K}\varsigma_{j,k}(\xi(t))\,\text d\beta_k(t)~,\quad 1\le j\le D,~t\ge 0.
\]

Let $G:(t,\xi)\in\R^+\times \R^D\mapsto G(t,\xi)\in\R$ be a stochastic process, such that for all $\xi\in\R^D$ and $t\ge 0$ one has
\[
\text d G(t,\xi)=J(t,\xi)\,\text dt+\sum_{k=1}^{K}H_k(t,\xi)\,\text d\beta_k(t),
\]
where $J:(t,\xi)\in\R^+\times \R^D\mapsto J(t,\xi)\in\R$ and $H_k:(t,\xi)\in\R^+\times \R^D\mapsto H_k(t,\xi)\in\R$, $1\le k\le K$, are stochastic processes. Assume that $G$ is almost surely of class $\mathcal{C}^{0,2}$.

Then the real-valued stochastic processes $t\mapsto G(t,\xi(t))$ is solution of the stochastic differential equation
\begin{align*}
\text dG(t,\xi(t))&=J(t,\xi(t))\,\text dt+\sum_{k=1}^{K}H_k(t,\xi(t))\,\text d\beta_k(t)\\
&+\sum_{j=1}^{d}b_j(\xi(t))\partial_{\xi_j}G(t,\xi(t))\,\text dt+\sum_{j=1}^{d}\sum_{k=1}^{K}\varsigma_{j,k}(t,\xi(t))\partial_{x_j}G(t,\xi(t))\,\text d\beta_k(t)\\
&+\frac12\sum_{i,j=1}^{d}\sum_{k=1}^{K}\varsigma_{i,k}(t,\xi(t))\varsigma_{j,k}(t,\xi(t))\partial_{x_i}\partial_{x_j}G(t,\xi(t))\,\text dt\\
&+\sum_{j=1}^{d}\sum_{k=1}^{K}\varsigma_{j,k}(t,\xi(t))\partial_{\xi_j}H_k(t,\xi(t))\,\text dt.
\end{align*}

We refer to~\cite[Theorem~1.17]{MR3839316} for a statement of the formula above.

\end{appendix}

\section*{Acknowledgments}
We thank the referees and the associated editor for helpful comments and suggestions on the initial version of this work.

The numerical experiments have been performed by adapting the code from \cite{codesmatlab} to take into account the considered stochastic perturbations.

We thank Simone Calogero and Lukas Einkemmer for interesting discussions.
The work of CEB is partially supported by the project SIMALIN (ANR-19-CE40-0016) operated by the French National Research Agency.
The work of DC is partially supported by the Swedish Research Council (VR) (project no. 2018-04443).
The computations were performed on resources provided by the National Academic Infrastructure for Supercomputing in Sweden (NAISS) at UPPMAX, Uppsala University partially funded by the Swedish Research Council through grant agreement no. 2022-06725.

\providecommand{\href}[2]{#2}
\providecommand{\arxiv}[1]{\href{http://arxiv.org/abs/#1}{arXiv:#1}}
\providecommand{\url}[1]{\texttt{#1}}
\providecommand{\urlprefix}{URL }

\end{document}